\documentclass[12pt]{amsart}
\usepackage{mathscinet}
\usepackage{amsmath,amsthm,amsfonts,amssymb,amscd,euscript}
\usepackage{textcomp}
\usepackage{color}
\usepackage{graphics}
\usepackage{tikz}
\usetikzlibrary{plotmarks}
\input{xy}
\xyoption{all}
\usepackage[colorlinks=true,pagebackref,hyperindex]{hyperref}
\usepackage{cleveref}

\newlength{\defbaselineskip}
\setlength{\defbaselineskip}{\baselineskip}


\setlength{\parindent}{.3 in} \setlength{\textwidth}{6.5 in}
\setlength{\topmargin} {-.2 in} \setlength{\evensidemargin}{0 in}
\setlength{\oddsidemargin}{0 in} \setlength{\footskip}{.3 in}
\setlength{\headheight}{.3 in} \setlength{\textheight}{8.7 in}
\setlength{\parskip}{.1 in}

\theoremstyle{plain}
\newtheorem{theorem}{Theorem}[section]

\newtheorem{proposition}[theorem]{Proposition}
\newtheorem{corollary}[theorem]{Corollary}
\newtheorem{lemma}[theorem]{Lemma}

\theoremstyle{definition}
\newtheorem{example}[theorem]{Example}
\newtheorem{definition}[theorem]{Definition}

\newtheorem{remark}[theorem]{Remark}

\newtheorem{conjecture}[theorem]{Conjecture}

\newcommand{\supp}{\operatorname{supp}}

\newcommand{\Aut}{\operatorname{Aut}}

\newcommand{\Rect}{\operatorname{Rect}}

\newcommand{\aaa}{\bar{a}}
\newcommand{\bbb}{\bar{b}}

\newcommand{\R}{\mathbb{R}}
\newcommand{\RRR}{\mathcal{R}}
\newcommand{\C}{\mathbb{C}}

\newcommand{\T}{\mathbb{T}}
\newcommand{\Z}{\mathbb{Z}}

\newcommand{\ff}{F}
\newcommand{\g}{{\alpha}}
\newcommand{\GG}{G}
\newcommand{\EEE}{W_F}

\newcommand{\ZZZ}{Z}

\theoremstyle{plain}
\newtheorem{theoremalpha}{Theorem}

\newtheorem{conjecturealpha}[theoremalpha]{Conjecture}

\theoremstyle{plain}

\theoremstyle{definition}

\numberwithin{equation}{section}

\newtheorem*{rep@theorem}{\rep@title} \newcommand{\newreptheorem}[2]{%
\newenvironment{rep#1}[1]{%
\def\rep@title{\bf #2 \ref{##1}}%
\begin{rep@theorem} }%
{\end{rep@theorem} } }
\makeatother
\newreptheorem{theorem}{Theorem}
\newreptheorem{conjecture}{Conjecture}

\begin{document}
\title[The two-dimensional Jacobian conjecture]{On the two-dimensional Jacobian conjecture: Magnus' formula revisited, IV}

\author{Kyungyong Lee}
\address{Department of Mathematics, University of Alabama,
	Tuscaloosa, AL 35487, U.S.A. 
	and Korea Institute for Advanced Study, Seoul 02455, Republic of Korea}
\email{kyungyong.lee@ua.edu; klee1@kias.re.kr}

\author{Li Li}
\address{Department of Mathematics and Statistics,
Oakland University, 
Rochester, MI 48309, U.S.A.}
\email{li2345@oakland.edu}

\begin{abstract}
Let $(F,G)$ be a Jacobian pair with $d=w\text{-deg}(F)$ and $e=w\text{-deg}(G)$ for some direction $w$.  
A generalized Magnus' formula approximates $G$ as  
$\sum_{\gamma\ge 0} c_\gamma F^{\frac{e-\gamma}{d}}$
for some complex numbers $c_\gamma$. 
We develop an approach to the two-dimensional Jacobian conjecture, aiming to minimize the use of terms corresponding to $\gamma>0$.
As an initial step in this approach, 
we define and study the inner polynomials
of $F$ and $G$. 
The main result of this paper shows that the northeastern vertex of the Newton polygon of each inner polynomial is located within a specific region. As applications of this result, we introduce several conjectures and  prove some of them for special cases.
\end{abstract}

\thanks{KL was supported by the University of Alabama, Korea Institute for Advanced Study, and the NSF grant DMS 2042786 and DMS 2302620.}

\maketitle
\setcounter{tocdepth}{1}
\tableofcontents

\section{Introduction}\label{section_intro}

The Jacobian conjecture, proposed by Keller \cite{Keller}, has been studied by numerous mathematicians. A partial list of related works includes \cite{AM,A,AdEs,ApOn,BCW,BK,DEZ,dBY,NVC,CW1,CW2,CZ,Dru,EssenTutaj,EssenWZ,Gwo,Hub,JZ,Kire,LM,M,MU,MO,Nagata,NaBa,Wang,Yag,Yu}.  Comprehensive surveys can be found in \cite{Essen,vdEssen}. In this series of papers, we focus exclusively on the plane case; therefore, whenever we refer to the Jacobian conjecture, we mean the two-dimensional Jacobian conjecture.

Let $\mathcal{R}=\mathbb{C}[x,y]$. For simplicity, denote $[f, g] :=\det \begin{pmatrix}{\partial f}/{\partial x} & {\partial g}/{\partial x}\\ {\partial f}/{\partial y} &{\partial g}/{\partial y}\end{pmatrix}\in \mathcal{R}$ for any pair of polynomials $f,g\in \mathcal{R}$.

\noindent \textbf{Jacobian conjecture.} 
\emph{Let $f,g\in \mathcal{R}$. 
Consider the endomorphism }$\pi: \mathcal{R}\longrightarrow \mathcal{R}$\emph{ given by }
$\pi(x)=f$\emph{ and }$\pi(y)=g$. \emph{If }$[f,g]\in \mathbb{C}\setminus\{0\}$ (in which case $(f,g)$ is called a \textit{Jacobian pair}), \emph{then }$\pi$\emph{ is bijective.}

The following result, due to Bialynicki-Birula and Rosenlicht, asserts that the term ``bijective'' in the Jacobian conjecture can be replaced with  ``polynomial automorphism''. 
\begin{theorem}\cite{Rosenlicht}\label{thm:BBR}
Let $k$ be an algebraically closed field of characteristic zero.
Let $\phi: k^n \to k^n$ be a polynomial map.  If $\phi$ is injective,
then $\phi$ is surjective and the inverse is a polynomial map, i.e.,  
$\phi$ is a polynomial automorphism.
\end{theorem}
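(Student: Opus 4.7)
The plan is to split the theorem into two assertions: (i) an injective polynomial self-map $\phi: k^n \to k^n$ is surjective, and (ii) a polynomial bijection has a polynomial inverse. The two halves require different kinds of arguments, with (i) being essentially a specialization argument and (ii) an application of Zariski's Main Theorem.

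For (i), I would use Ax's reduction-to-finite-fields strategy. Fix a degree bound $d$ for the components of $\phi$. The statement ``every injective polynomial map $k^n \to k^n$ with components of degree at most $d$ is surjective'' can be encoded, for each $d$, as a first-order sentence in the language of fields by quantifying universally over the finitely many coefficients. It therefore suffices, by the Lefschetz principle, to prove it in every algebraically closed field of positive characteristic. So assume $k = \overline{\mathbb{F}_p}$ and let $b \in k^n$. Let $F_0 \subset k$ be the finite subfield generated by the coordinates of $b$ together with all coefficients of $\phi$. Since $\phi$ has coefficients in $F_0$, it restricts to a self-map $F_0^n \to F_0^n$, which is injective as the restriction of an injective map. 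Any injective self-map of a finite set is bijective, so $b$ has a preimage in $F_0^n \subset k^n$.

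For (ii), I would use the function field approach together with Zariski's Main Theorem. By (i), the polynomial map $\phi: \mathbb{A}^n \to \mathbb{A}^n$ is a surjective morphism between integral varieties of the same dimension, so the induced inclusion of coordinate rings gives a finite field extension $k(x_1,\ldots,x_n) / k(\phi_1,\ldots,\phi_n)$, of some degree $D$. In characteristic zero this extension is separable, and a general closed point of the target then has exactly $D$ preimages; injectivity forces $D = 1$, meaning $\phi$ is birational. A birational quasi-finite morphism to a normal variety is, by Zariski's Main Theorem, an open immersion; since it is also surjective, it is an isomorphism. Hence $\phi^{-1}$ is a morphism of affine space, i.e., given by polynomials.

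The principal obstacle is step (i). Set-theoretic injectivity over $k$ is an infinitary condition with no obvious finiteness content; the trick is to bound the degree of $\phi$, encode the resulting restricted statement as a first-order sentence, and then invoke the Lefschetz principle. This requires careful formulation of ``injective'' and ``surjective'' as first-order properties of the coefficients, and that encoding is ultimately where the subtlety resides. Step (ii), by contrast, is largely bookkeeping once Zariski's Main Theorem is taken as a black box, and the only delicate point is that characteristic zero ensures separability so that the degree of the field extension actually equals the cardinality of a generic fibre.
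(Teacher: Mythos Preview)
The paper does not prove this theorem; it merely cites it from Bia\l ynicki-Birula and Rosenlicht \cite{Rosenlicht} and uses it as a black box. There is therefore nothing to compare your argument against.

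That said, your outline is essentially the standard modern proof. Part (i) is Ax's model-theoretic argument (reduction to $\overline{\mathbb{F}_p}$ via first-order encoding and compactness), which is \emph{not} the original Bia\l ynicki-Birula--Rosenlicht approach but is correct. One small imprecision: what you call ``the Lefschetz principle'' is really the transfer from positive to zero characteristic, which needs compactness (a first-order sentence true in algebraically closed fields of all sufficiently large characteristics is true in characteristic zero), not just the Lefschetz principle in its usual form. Part (ii) is fine: injectivity gives quasi-finiteness, characteristic zero gives separability so the generic fibre count equals the field-extension degree, and then Zariski's Main Theorem (quasi-finite $+$ birational $+$ normal target $\Rightarrow$ open immersion) together with surjectivity finishes it.
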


A useful tool to study the Jacobian conjecture is the Newton polygon. One source for this is \cite{CW2}, which we redefine here. Let $f=\displaystyle\sum_{i,j\geq 0} f_{ij}x^iy^j$ be a polynomial in $\mathcal{R}$ with $f_{ij}\in \mathbb{C}$. The \textit{support} of $f$ is defined as 
\[\supp(f)=\{(i,j)\in \mathbb{Z}^2 \mid f_{ij}\neq 0\} \subseteq \R^2.\]
The \textit{Newton polygon} for $f\in\mathcal{R}$, denoted $N(f)$,  is the convex hull of $\supp(f)$. 
The \textit{augmented Newton polygon} for $f\in\mathcal{R}$,  denoted $N^0(f)$,  is the convex hull of $\supp(f)\cup \{(0,0)\}$.\footnote{This is the definition of Newton polygon typically used in the 
Jacobian conjecture literature, e.g. \cite{CW2}.} 
A vertex of $N^0(f)$ is called a \emph{nontrivial vertex} if it is not equal to $(0,0)$. 
Clearly, $N(f)\subseteq N^0(f)\subseteq \mathbb{R}_{\ge0}^2$ for $f\in \mathcal{R}$.

Let $(F,G)$ be a Jacobian pair with $d=w\text{-deg}(F)$ and $e=w\text{-deg}(G)$ for some direction $w$.  
A generalized Magnus' formula (Theorem~\ref{Magnus_thm}), proved in \cite{HLLN}, says that $G$ is approximately equal to 
$$\sum_{\gamma\ge 0} c_\gamma F^{\frac{e-\gamma}{d}}$$
for some complex numbers $c_\gamma$. 
We develop an approach to the Jacobian conjecture aimed at minimizing the use of terms corresponding to $\gamma>0$.

As an initial step in this approach, we define and study the \emph{inner} (resp. \emph{innermost}) \emph{polynomials} of $F$ and $G$. The main result of this paper (Theorem~\ref{main_thm} and Corollary~\ref{main_cor}) shows that the northeastern vertex of the Newton polygon of each inner polynomial, called the \emph{inner} (resp. \emph{innermost}) \emph{vertex}, is located in a specific region. This result leads us to formulate a new conjecture (Conjecture~\ref{main_conj5}),  which implies the Jacobian conjecture. We prove a special case of this conjecture. In addition, as an important intermediate step towards the Jacobian conjecture,  we propose the inner vertex (resp. innermost vertex) conjecture (Conjecture~\ref{weak_Jac_conj}) and prove it for the case that $\frac{e}{d}>\frac{n-m}{a}-1$, where $a=d/\gcd(d,e)$, and $(m,n)$ is the northeastern vertex of the Newton polygon of $F$. Note that to understand this paper, the reader does not need to read the second and third papers \cite{GHLL2,GHLL3}.

Below is the outline of the paper.
\S2 introduces \Cref{Jac_conj} and demonstrates its equivalence to the Jacobian conjecture.
\S3 presents \Cref{Jac_conj2}, which imposes an extra condition on the Newton polygons of $F$ and $G$.
\S4 states that if $F$ can be reduced to a one-variable nonlinear polynomial, as defined within that section, the Jacobian conjecture will be proven. It also outlines a potential method to achieve this reduction.
In \S5, we propose Conjectures \ref{Jac_conj3} and \ref{Jac_conj4} based on the approach from \S4, which details the reduction of $F$ to a one-variable polynomial.
Then we present Theorem~\ref{main_thm} and Corollary~\ref{main_cor}, which provide a strong condition on the Newton polygons of the innermost  and inner polynomials of $F$. %
Theorem~\ref{main_thm} is proved in \S6.
In \S7, we propose Conjecture~\ref{main_conj5} based on the main result. 
In \S8, we prove a special case of this conjecture.
In Appendices A and B, we prove the uniqueness of the principal polynomial and an extended Magnus' formula. 

\noindent\emph{Acknowledgements.} 
We would like to thank Lenny Makar-Limanov and David Wright for valuable discussions, and Christian Valqui for numerous helpful suggestions. We also thank Rob Lazarsfeld for his advice. We extend our special thanks to Lauren K.~Williams, who generously helped us improve the readability of this work.

\section{\Cref{Jac_conj}}
In this section, we shall introduce \Cref{Jac_conj} and show that it implies
the Jacobian conjecture.
We start by recalling several fundamental results
on Jacobian pairs.
Let $\deg(f)$ denote the total degree of a polynomial $f$ (which is $(1,1)\text{-}\deg(f)$ as in Definition \ref{def:whomogeneous}).  

\begin{proposition}\label{prop:similar}
\cite[Theorem 10.2.1]{vdEssen}
If $(f,g)$ is a Jacobian pair with $\deg(f)>1$ and $\deg(g)>1$, then 
$N^0(f)$ is similar to $N^0(g)$ with the origin as center of similarity.
(It follows that the similarity ratio between the Newton polygons is
$\deg(f): \deg(g)$.)
\end{proposition}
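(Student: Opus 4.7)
The plan is to reduce the similarity claim to proving the identity of support functions
\[
\deg(f)\cdot \deg_w(g) \;=\; \deg(g)\cdot \deg_w(f) \qquad \text{for every } w=(w_1,w_2) \text{ with } w_1,w_2\geq 0,
\]
where $\deg_w$ denotes the $w$-weighted degree. Since both $N^0(f)$ and $N^0(g)$ sit in $\mathbb{R}_{\geq 0}^2$ and contain the origin, their support functions $h_{N^0(f)}(w) = \max_{v\in N^0(f)}\langle v,w\rangle$ coincide with $\deg_w(f)$ whenever $w_1,w_2\geq 0$, and a convex polytope containing the origin is recovered from its support function. Hence proving the displayed identity is equivalent to the assertion that $N^0(g) = \tfrac{\deg(g)}{\deg(f)}\cdot N^0(f)$, i.e.\ the similarity with the announced ratio.

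To reach that identity I would exploit the Jacobian hypothesis via leading forms. Fix $w$ in the open first quadrant and let $f_w,g_w$ denote the $w$-homogeneous leading forms of $f,g$. A direct estimate on the partial derivatives gives $\deg_w([f,g])\leq \deg_w(f)+\deg_w(g)-w_1-w_2$, with equality iff $[f_w,g_w]\neq 0$, in which case $[f_w,g_w]$ is the $w$-leading form of $[f,g]$. Since $[f,g]\in\mathbb{C}^{\ast}$ has $w$-degree zero while the hypothesis $\deg(f),\deg(g)>1$ forces the right-hand side to be strictly positive at $w=(1,1)$, an openness argument shows the strict inequality persists on an open neighborhood of $(1,1)$. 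There we must have $[f_w,g_w]=0$, so $f_w$ and $g_w$ are algebraically dependent over $\mathbb{C}$. Invoking the classical fact that two algebraically dependent, $w$-homogeneous polynomials in $\mathbb{C}[x,y]$ are monomial powers of a common $w$-homogeneous polynomial, I would produce $h_w$, scalars $\alpha_w,\beta_w\in\mathbb{C}^{\ast}$, and coprime positive integers $p_w,q_w$ with $f_w = \alpha_w h_w^{p_w}$ and $g_w = \beta_w h_w^{q_w}$, so that $\deg_w(f)/\deg_w(g) = p_w/q_w$.

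The main obstacle is to promote this local proportionality into the global identity above, i.e.\ to show $p_w/q_w$ is independent of $w$ and equals $\deg(f):\deg(g)$. My plan is to argue facet by facet: on the interior of any facial cone of $N^0(f)$ (the locus of $w$'s for which the maximum defining $\deg_w(f)$ is attained on a single edge or vertex) the leading form $f_w$ is constant, hence so are $h_w$, $p_w$, $q_w$, and the ratio $p_w/q_w$ is locally constant; across the common ray between two adjacent facial cones the two leading forms degenerate continuously to the shared edge form, forcing the coprime pairs on either side to coincide. Iterating produces a single pair $(p,q)$ valid throughout the neighborhood constructed in the previous step, and specializing to $w=(1,1)$ identifies it as $(\deg(f)/D,\deg(g)/D)$ with $D=\gcd(\deg(f),\deg(g))$; hence $p_w/q_w = \deg(f)/\deg(g)$ throughout. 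The displayed identity finally extends to the boundary directions $w_1=0$ or $w_2=0$ by continuity of $w\mapsto \deg_w$. The technical crux throughout is the algebraic dependence lemma for $w$-homogeneous bivariate polynomials; once that is in hand, everything else is bookkeeping about how the leading forms vary with the direction.
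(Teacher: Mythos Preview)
The paper does not prove this proposition; it is quoted as \cite[Theorem 10.2.1]{vdEssen} and used as a black box. So there is no ``paper's proof'' to compare against, and your sketch is an attempt to supply what the paper omits. Your overall strategy---pass to $w$-leading forms, use the Jacobian condition to force $[f_w,g_w]=0$, invoke algebraic dependence of $w$-homogeneous bivariate polynomials to get $f_w=\alpha h^{p}$, $g_w=\beta h^{q}$, and then glue along shared vertices---is exactly the classical route (Abhyankar, Cheng--Wang, van den Essen).

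There are, however, two genuine gaps in your execution. First, restricting to $w$ in the closed first quadrant is not enough to recover $N^0(f)$: a convex polygon in $\mathbb{R}_{\ge0}^2$ containing the origin is \emph{not} determined by its support function on $\{w_1,w_2\ge0\}$ alone. For instance, the square with vertices $(0,0),(2,0),(2,2),(0,2)$ and the pentagon with vertices $(0,0),(4,0),(4,4),(1,4),(0,3)$ have proportional support functions on the first quadrant (ratio $2$) but are not similar; the discrepancy is only visible along directions like $w=(-1,1)$. Edges of $N^0(f)$ incident to the $y$-axis (or $x$-axis) can have outward normals in the second (or fourth) quadrant, and those directions must be treated as well. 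Second, you only establish $[f_w,g_w]=0$ on a neighbourhood of $(1,1)$ and then try to ``extend by continuity'' to the boundary of the first quadrant; but continuity only reaches the closure of that neighbourhood, not all edge-normal directions of $N^0(f)$. The fix for both issues is the same: show directly that $\deg_w(f)+\deg_w(g)>w_1+w_2$ for \emph{every} direction $w$ normal to a non-origin edge of $N^0(f)$ or $N^0(g)$ (this uses $\deg f,\deg g>1$ and the constraint $[f,g]\in\mathbb{C}^*$), run the leading-form argument there, and then propagate the ratio edge-to-edge around the full boundary of the polygon rather than only near $(1,1)$.
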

%
%

The following result is a consequence of
\cite[5.1.6a and 5.1.11]{vdEssen}.
\begin{theorem}\label{thm:auto}
Let $f,g\in \C[x,y]$ 
	and suppose that 
the endomorphism  
	$\pi: \mathcal{R}\longrightarrow 
\mathcal{R}$\emph{ given by }
$\pi(x)=f$\emph{ and }$\pi(y)=g$ is an automorphism.
Then either 
	$\deg(f) \vert \deg(g)$ or 
$\deg(g) \vert \deg(f)$.
\end{theorem}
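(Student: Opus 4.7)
The plan is to invoke the Jung--van der Kulk tameness theorem \cite[5.1.6a]{vdEssen}, which asserts that every automorphism of $\mathbb{C}[x,y]$ admits a decomposition $\pi = \phi_n \circ \cdots \circ \phi_1$ into a finite product of generators, each of which is either affine or an elementary triangular map of the form $(x,y)\mapsto (x+p(y), y)$ or $(x,y)\mapsto (x, y+q(x))$ for some univariate polynomial. The divisibility of degrees then follows by tracking the total degree across such a decomposition, as recorded in \cite[5.1.11]{vdEssen}.

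I would proceed by induction on the number $n$ of generators. The base cases are immediate: an affine automorphism gives $\deg(f)=\deg(g)=1$, while an elementary triangular generator such as $(x+p(y), y)$ gives $\deg(g)=1\mid \deg(f)$. For the inductive step, set $(f',g'):=(\pi'(x), \pi'(y))$ with $\pi'=\phi_{n-1}\circ\cdots\circ\phi_1$ and assume the divisibility property for the shorter composition. When the final generator $\phi_n$ is elementary of type $(x+p(y), y)$, the new pair becomes $(f'+p(g'), g')$; the degree $\deg(f'+p(g'))$ is either $\deg(f')$ (so the inductive hypothesis persists verbatim) or $(\deg p)\cdot\deg(g')$, which is a multiple of $\deg \pi(y)$, so divisibility holds either way. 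The other elementary type is handled symmetrically.

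The main obstacle will be the affine step, where $\pi(x)$ and $\pi(y)$ are $\mathbb{C}$-linear combinations of $f'$, $g'$, and a constant: if the top-degree forms of $f'$ and $g'$ happen to align and cancel, the total degree could drop unexpectedly. The resolution is to observe that such cancellation can occur only when $\deg(f')=\deg(g')$ (otherwise no linear combination can annihilate the top form of the higher-degree member of the pair), in which case divisibility is trivial both before and after the affine step; when $\deg(f')\neq \deg(g')$ the top forms are linearly independent and the maximum degree is preserved. Combining the elementary and affine analyses propagates the divisibility through the induction and yields the theorem. As a sanity check, this is perfectly consistent with the similarity of Newton polygons in Proposition~\ref{prop:similar}, which already forces $N^0(f)$ and $N^0(g)$ into the same shape up to a scalar ratio of $\deg(f):\deg(g)$.
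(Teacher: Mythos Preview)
The paper does not actually prove this theorem; it simply records it as ``a consequence of \cite[5.1.6a and 5.1.11]{vdEssen}.'' At that level your plan coincides with the paper's, since you invoke exactly the same two references.

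Your fleshed-out induction on the length of a tame word, however, has a genuine gap in the elementary step. You assert the dichotomy that $\deg\bigl(f'+p(g')\bigr)$ is either $\deg(f')$ or $(\deg p)\cdot\deg(g')$. This fails precisely when $\deg(f')=(\deg p)\cdot\deg(g')$ and the top forms cancel: e.g.\ for the automorphism $(f',g')=\bigl((x+y^2)^2+y,\;x+y^2\bigr)$ with $p(y)=-y^2$, one gets $f'+p(g')=y$, of degree $1$, which is neither $4$ nor $4$. You discuss top-form cancellation only for the affine generator, but it occurs for elementary generators as well, and your induction on word length gives no control over the resulting degree. The same issue resurfaces in your affine analysis: when $\deg(f')=\deg(g')$ and the leading forms are proportional, you say divisibility is ``trivial\ldots after the affine step,'' but after cancellation one component keeps degree $D$ while the other drops to some $D'<D$, and nothing in your hypothesis forces $D'\mid D$; that is exactly the assertion you are trying to prove, now for a different automorphism not covered by the length induction.

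The clean route, and what \cite[5.1.11]{vdEssen} actually provides, is to use a \emph{reduced} decomposition and the multiplicativity $\deg\pi=\prod_i\deg\tau_i$; equivalently, one uses the reduction lemma underlying Jung--van der Kulk: if $\deg\pi>1$ and $\deg f\ge\deg g$, there exist $c\in\mathbb{C}$ and $k\ge 1$ with $\deg(f-cg^{k})<\deg f$, which immediately forces $\deg f=k\deg g$. Either formulation avoids the cancellation pitfalls by building the divisibility into the reduction step itself rather than chasing it through an arbitrary (possibly non-reduced) word.
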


The following result is due to Abhyanker \cite[Theorem 10.2.23 (3) $\leftrightarrow$ (4)]{vdEssen}.
\begin{theorem}\label{thm:equiv}
The following statements are equivalent.
\begin{itemize}
\item For any Jacobian pair $(f,g)$, 
one has that either $\deg(f) \vert \deg(g)$ or $\deg(g) \vert \deg(f)$.
\item For any Jacobian pair $(f,g)$, the Newton polygons 
	$N^0(f)$ and $N^0(g)$ are triangles (where a line segment is considered
		to be a triangle).
\item The Jacobian conjecture.
\end{itemize}
\end{theorem}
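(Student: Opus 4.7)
The plan is to prove the equivalence via the cycle $(3) \Rightarrow (1) \Rightarrow (2) \Rightarrow (3)$, with \Cref{thm:BBR,thm:auto,prop:similar} as the main tools. The implication $(3) \Rightarrow (1)$ is essentially immediate: given a Jacobian pair $(f,g)$, the Jacobian conjecture makes $\pi$ bijective, \Cref{thm:BBR} upgrades this to a polynomial automorphism, and \Cref{thm:auto} yields the divisibility.

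For $(1) \Rightarrow (2)$, I would induct on $\deg(f) + \deg(g)$. The base case $\min(\deg(f), \deg(g)) = 1$ can be handled directly: a linear change of coordinates makes the linear polynomial equal to $x$, the Jacobian condition forces the other to have the form $\alpha y + h(x)$, and then both augmented Newton polygons are (possibly degenerate) triangles. For the inductive step with both degrees at least $2$, assume without loss of generality $\deg(f) \le \deg(g)$; by (1) we obtain $k = \deg(g)/\deg(f) \in \Z_{\ge 1}$, and \Cref{prop:similar} gives $N^0(g) = k \cdot N^0(f)$. The top $(1,1)$-homogeneous parts $f^{(*)}, g^{(*)}$ satisfy $[f^{(*)}, g^{(*)}] = 0$ because $[f,g]$ is a nonzero constant and hence has degree zero. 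Since homogeneous binary forms with vanishing Jacobian are (up to scalars) powers of a common form, comparing degrees forces $g^{(*)} = c (f^{(*)})^k$ for some $c \in \C^*$, and $g - cf^k$ is nonzero (else $[f,g] = 0$). The pair $(f,\, g - cf^k)$ is then Jacobian with strictly smaller total degree and still satisfies (1), so by induction $N^0(f)$ is a triangle; $N^0(g)$ follows by similarity.

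For $(2) \Rightarrow (3)$ -- the hardest step -- assume every Jacobian pair has triangular augmented Newton polygons. The same leading-form analysis produces a Magnus-type reduction $g \mapsto g - cf^k$ with $k \ge 1$ that strictly lowers $\deg(g)$ (or drops $\deg(f)$ after swapping roles). Iterating, we drive the pair down until some polynomial has degree $1$, at which point $\pi$ is manifestly an automorphism; reversing the chain of reductions shows the original $\pi$ was an automorphism, establishing the Jacobian conjecture. I expect the main obstacle to lie in this last step: one must verify that the triangularity hypothesis, applied to the \emph{full class} of Jacobian pairs, really forces the Magnus-type reduction to apply at every stage and to terminate at a base case giving an automorphism rather than stagnating. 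The subsidiary delicate point, needed in both $(1) \Rightarrow (2)$ and $(2) \Rightarrow (3)$, is the rigorous proof of the leading-form identity $g^{(*)} = c (f^{(*)})^k$, which rests on the classification of homogeneous binary forms with vanishing Jacobian as common powers of a single form.
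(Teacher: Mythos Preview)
The paper does not give its own proof of this theorem; it simply attributes the result to Abhyankar and cites \cite[Theorem 10.2.23]{vdEssen}. So there is nothing in the paper to compare your argument against, and I will comment on the proposal itself.

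Your implications $(3)\Rightarrow(1)$ and $(1)\Rightarrow(2)$ are essentially sound. One quibble: in the base case of $(1)\Rightarrow(2)$ you pass to new coordinates making $f=x$, but the Newton polygon is \emph{not} preserved under linear coordinate changes, so you must check that $N^0(f)$ and $N^0(g)$ are triangles in the \emph{original} coordinates. This is easily repaired by writing $g=\alpha y+h(\beta x+\gamma y)$ directly and observing that its augmented Newton polygon is the right triangle with legs on the axes.

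The genuine gap is in $(2)\Rightarrow(3)$. Your Magnus reduction $g\mapsto g-cf^k$ requires $k=\deg(g)/\deg(f)$ to be an integer, and that is exactly statement~(1), not statement~(2). Triangularity alone does not supply it: similar lattice triangles $(0,0),(4,0),(0,6)$ and $(0,0),(6,0),(0,9)$ have degree ratio $2{:}3$. From $[f_+,g_+]=0$ one obtains only $g_+=c\,h^s$ with $h=f_+^{1/r}$ for the maximal such $r$ (cf.\ \Cref{lem:top}); this yields $g_+=c'f_+^{k}$ only when $\deg(f)\mid\deg(g)$. You flag this step as the main obstacle, but your sketch gives no mechanism by which the triangularity hypothesis closes it. One clean route, using tools already quoted in the paper, is to argue by contradiction via \Cref{cor:trap}: if the Jacobian conjecture fails, some automorphic image of $f$ has $N^0$ equal to a genuine rectangle $\Rect_{m,n}$ with $m,n\ge1$, which is not a triangle, contradicting~(2). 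Alternatively, Abhyankar's original argument shows that for a Jacobian pair whose polygon is the triangle $(0,0),(m,0),(0,n)$ one must have $m\mid n$ or $n\mid m$, after which an \emph{elementary automorphism} $y\mapsto y-\alpha x^{n/m}$ (not a Magnus subtraction) shrinks the polygon---but establishing that divisibility is a separate lemma, not ``the same leading-form analysis.''
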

We now present a conjecture equivalent to the Jacobian conjecture.


\begin{conjecturealpha}\label{Jac_conj}
Let $a,b\in \mathbb{Z}_{>0}$ be relatively prime with $2\le a < b$. 
Suppose that $F,G\in \mathcal{R}$ satisfy the following:

\noindent\emph{(1)} $[\ff,\GG]  \in  \mathbb{C}$;

\noindent\emph{(2)} $\{(0,0),(0,1),(1,0)\}\subseteq N^0(\ff)$, and $N^0(\ff)$ is similar to $N^0(\GG)$ with the origin as the center of similarity and with the ratio $\deg(\ff) : \deg(\GG) = a : b$. 

\noindent Then  $[\ff, \GG]=0$. 
\end{conjecturealpha}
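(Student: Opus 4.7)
The approach is proof by contradiction: assume $[F,G] = c \in \mathbb{C} \setminus \{0\}$, so $(F,G)$ is a Jacobian pair. By Proposition~\ref{prop:similar}, for any Jacobian pair of positive degree the Newton polygons $N^0(F)$ and $N^0(G)$ are automatically similar with ratio $\deg(F) : \deg(G)$, so hypothesis (2) reduces the problem to studying Jacobian pairs whose degrees form a relatively prime ratio $a : b$ with $2 \le a < b$ and whose $N^0(F)$ contains the unit corner $\{(0,0),(0,1),(1,0)\}$ (equivalently, $F$ carries nontrivial pure-$y$ and pure-$x$ terms).

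The first concrete step is to apply the generalized Magnus formula (Theorem~\ref{Magnus_thm}) in the direction $w = (1,1)$, for which $d = a$ and $e = b$. This produces the asymptotic expansion
\[
G \;\sim\; \sum_{\gamma \ge 0} c_\gamma\, F^{(b-\gamma)/a}.
\]
Because $\gcd(a,b) = 1$ and $a \ge 2$, the leading exponent $b/a$ is fractional, so $c_0 F^{b/a}$ alone cannot be a polynomial; the correction terms with $\gamma > 0$ must conspire to cancel every fractional power of $F$ while still producing a genuine polynomial $G$. The thrust of the paper's program is precisely to minimize the use of these $\gamma > 0$ terms.

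The second step is to invoke the inner-polynomial machinery: Theorem~\ref{main_thm} and Corollary~\ref{main_cor} confine the northeastern vertex of the Newton polygon of every inner (or innermost) polynomial of $F$ to a prescribed region. Translating this back to the supports of the summands $c_\gamma F^{(b-\gamma)/a}$, and combining with the fact that $N^0(F)$ reaches all the way to both coordinate axes, one aims to show that the $\gamma > 0$ contributions cannot produce the corner monomials of $N^0(G)$ demanded by the similarity. Propagating this defect through the identity $[F,G] = c$ along each edge of the Newton polygon of the Jacobian should yield an inconsistency unless $a \mid b$, contradicting $\gcd(a,b) = 1$ and $a \ge 2$.

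The main obstacle will be converting the vertex restriction of Theorem~\ref{main_thm} into the outright vanishing of every $c_\gamma$ with $\gamma > 0$; this is precisely the content of Conjecture~\ref{main_conj5}, which is proved in the paper only in the special case of \S8. A natural line of attack is inductive peeling: once the innermost polynomial is controlled, subtract its contribution from $G$ and re-expand the remainder by Magnus' formula, producing a strictly smaller inner polynomial whose northeastern vertex is again constrained by Theorem~\ref{main_thm}; iterating, one hopes to exhaust all nontrivial correction terms. Sharpening Theorem~\ref{main_thm} to control additional vertices of each inner polynomial's Newton polygon appears to be the necessary technical input to make this induction go through.
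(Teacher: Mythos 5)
You are attempting to prove Conjecture~\ref{Jac_conj}, which the paper itself does not prove: the paper only shows (in the proposition following the conjecture, using Proposition~\ref{prop:similar} and Theorem~\ref{thm:equiv}) that Conjecture~\ref{Jac_conj} is \emph{equivalent} to the two-dimensional Jacobian conjecture, and then builds a chain of implications Conjecture~\ref{Jac_conj4} $\Rightarrow$ Conjecture~\ref{Jac_conj3} $\Rightarrow$ Conjecture~\ref{Jac_conj2} $\Rightarrow$ Conjecture~\ref{Jac_conj}, plus Conjecture~\ref{main_conj5} $\Rightarrow$ Jacobian conjecture, without establishing any of those source conjectures in general. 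Your proposal is accordingly not a proof but a restatement of the paper's program, and you concede the decisive point yourself: converting the vertex restriction of Theorem~\ref{main_thm} into the vanishing of the $\gamma>0$ Magnus corrections is precisely Conjecture~\ref{main_conj5} (equivalently, the content of Conjectures~\ref{Jac_conj4} and~\ref{weak_Jac_conj}), which is proved in \S8 only in the special case $a=n$. Since, by the paper's own equivalence, a complete proof of Conjecture~\ref{Jac_conj} would settle the Jacobian conjecture, this is a genuine and currently unfixable gap, not a technicality to be patched.

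Beyond that structural gap, two concrete steps in your sketch would fail as written. First, Theorem~\ref{main_thm} cannot be applied to an $F$ satisfying only hypothesis (2) of Conjecture~\ref{Jac_conj}: it requires $F\in R_{m,n}$ and $G\in R_{bm/a,bn/a}$, i.e.\ rectangular Newton polygons with normalized corner coefficients, and reaching that normal form needs the reduction used in the proof of Proposition~\ref{BtoA} (exclude the automorphism case via Theorem~\ref{thm:auto}, invoke Theorem~\ref{thm:BBR}, then Theorem~\ref{thm:trap}/Corollary~\ref{cor:trap} and a rescaling); your proposal skips this entirely. Second, the claim that ``$c_0F^{b/a}$ alone cannot be a polynomial because $b/a$ is fractional'' is unjustified: $F^{b/a}$ is a polynomial whenever $F$ is an $a$-th power, and in Theorem~\ref{Magnus_thm} the fractional powers concern the leading form $F_d$ and are governed by the integer $r$ there, so fractionality of the exponent alone produces no contradiction. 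The ``inductive peeling'' you propose is likewise only a heuristic: Theorem~\ref{main_thm} constrains a single (NE) vertex of the inner or innermost polynomial, and nothing in the paper shows that subtracting its contribution and re-expanding yields a strictly decreasing quantity that exhausts the correction terms.
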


Throughout the paper, we often use uppercase letters $\ff,\GG$ to denote
polynomials satisfying conditions (1) and/or (2) of 
Conjecture \ref{Jac_conj}, and  
lowercase letters $f,g$ to denote arbitrary polynomials.

\begin{proposition}
\Cref{Jac_conj} holds if and only if the Jacobian conjecture
holds.
\end{proposition}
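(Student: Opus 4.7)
My plan is to prove both directions of the equivalence, leveraging Theorems~\ref{thm:BBR},~\ref{thm:auto}, and~\ref{thm:equiv} together with Proposition~\ref{prop:similar}. For the easy direction (Jacobian conjecture $\Rightarrow$ Conjecture~\ref{Jac_conj}), I would argue by contradiction: if $(F,G)$ satisfies the hypotheses of Conjecture~\ref{Jac_conj} with $[F,G]\ne 0$, then $(F,G)$ is a Jacobian pair, so Theorem~\ref{thm:BBR} together with the assumed Jacobian conjecture makes $\pi$ a polynomial automorphism, and Theorem~\ref{thm:auto} then forces $\deg(F)\mid\deg(G)$ or $\deg(G)\mid\deg(F)$. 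But $\deg(F):\deg(G)=a:b$ with $\gcd(a,b)=1$ and $2\le a<b$ rules out both divisibilities.

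For the harder direction (Conjecture~\ref{Jac_conj} $\Rightarrow$ Jacobian conjecture), I would work contrapositively via Theorem~\ref{thm:equiv}. If the Jacobian conjecture fails, then there exists a Jacobian pair $(f,g)$ with neither $\deg(f)\mid\deg(g)$ nor $\deg(g)\mid\deg(f)$; this forces $\deg(f),\deg(g)\ge 2$, and after possibly swapping the roles of $f$ and $g$ I may assume $\deg(f)<\deg(g)$. Setting $a=\deg(f)/\gcd(\deg(f),\deg(g))$ and $b=\deg(g)/\gcd(\deg(f),\deg(g))$ produces coprime integers with $2\le a<b$, and Proposition~\ref{prop:similar} immediately supplies the similarity required in condition~(2). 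The plan is to apply Conjecture~\ref{Jac_conj} to $(F,G):=(f,g)$ and deduce $[f,g]=0$, contradicting $(f,g)$ being a Jacobian pair.

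The main obstacle is verifying $\{(0,1),(1,0)\}\subseteq N^0(f)$, which I would establish by contradiction. Suppose $(1,0)\notin N^0(f)$; then $f$ has no pure $x$-monomial of positive degree, so $f(x,0)$ is constant, and subtracting it (which preserves both $[f,g]$ and $N^0(f)$) lets me write $f=y\cdot h$ for some $h\in\mathcal{R}$. Because the similarity of Proposition~\ref{prop:similar} is centered at the origin, it preserves the positive $x$-axis, so $N^0(g)$ also fails to contain $(1,0)$, and the same reduction gives $g=y\cdot h'$ after a constant shift. A direct expansion of $[yh,\,yh']=y(h_x h'-h\,h'_x)+y^2(h_x h'_y-h_y h'_x)$ shows every term is divisible by $y$, contradicting $[f,g]\in\mathbb{C}\setminus\{0\}$; by symmetry $(0,1)\in N^0(f)$ as well. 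Now $(f,g)$ satisfies every hypothesis of Conjecture~\ref{Jac_conj}, whose conclusion $[f,g]=0$ contradicts the assumption that $(f,g)$ is a Jacobian pair, completing the proof.
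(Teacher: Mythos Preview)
Your proof is correct and follows the same overall architecture as the paper's: both directions hinge on the degree-divisibility criterion, and the forward implication proceeds via Theorem~\ref{thm:equiv} and Proposition~\ref{prop:similar}. The one substantive difference is in how you verify $\{(0,1),(1,0)\}\subseteq N^0(f)$. The paper simply cites \cite[Proposition~10.2.6]{vdEssen}; you instead give a direct elementary argument by shifting $f$ and $g$ by constants, factoring $f=yh$, $g=yh'$, and noting that $[yh,yh']$ is divisible by $y$, contradicting $[f,g]\in\mathbb{C}\setminus\{0\}$. This makes your proof more self-contained at the cost of a few extra lines. (One small point: your sentence ``it preserves the positive $x$-axis, so $N^0(g)$ also fails to contain $(1,0)$'' tacitly uses that $N^0(f)$ meets the $x$-axis only at the origin, which follows since vertices of $N^0(f)$ are lattice points and $(1,0)\notin N^0(f)$; you might make this explicit.) For the reverse direction you route through Theorem~\ref{thm:BBR} and Theorem~\ref{thm:auto} rather than invoking Theorem~\ref{thm:equiv} directly as the paper does, but that is cosmetic.
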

\begin{proof}
For ``$\Rightarrow$'', assume that \Cref{Jac_conj} is true and the Jacobian conjecture is false. 
By Theorem \ref{thm:equiv}, 
there exists a Jacobian pair $(f,g)$ such that $\deg(f) \nmid \deg(g)$ and $\deg(g) \nmid \deg(f)$.
In particular, $\deg(f)>1$, $\deg(g)>1$. 
By \Cref{prop:similar}, 
$N^0(f)$ is similar to $N^0(g)$ with the origin as the center of similarity
and a similarity ratio of $\deg(f): \deg(g)$. 
If $\deg(f)/\deg(g)$ or $\deg(g)/\deg(f)$ is an integer,
then the first statement of \Cref{thm:equiv} holds.
Otherwise, without loss of generality we assume $\deg(f)/ \deg(g) = a/b$
for relatively prime positive integers $a,b$ such that 
$2 \leq a  < b$. Since the degrees of both $f$ and $g$ are 
greater than $1$, \cite[Proposition 10.2.6]{vdEssen}
implies that the Newton polygons of both $f$ and $g$ contain
 $\{(0,0), (0,1), (1,0)\}$.  So the first two conditions of 
\Cref{Jac_conj} hold, and hence $[f,g]=0$, a contradiction.
Thus, we have shown that if $(f,g)$ is a Jacobian pair,
the first statement of \Cref{thm:equiv} holds.  Therefore, 
by \Cref{thm:equiv}, the Jacobian conjecture holds.

For ``$\Leftarrow$'', suppose that the Jacobian conjecture holds.
Suppose that $F,G$ satisfy conditions (1), (2) of \Cref{Jac_conj}. 
If $[F,G]\in\mathbb{C}\setminus\{0\}$, then 
by \Cref{thm:equiv}, either $\deg(F)|\deg(G)$ or $\deg(G)|\deg(F)$.
This contradicts condition (2) of \Cref{Jac_conj}. So $[F,G]=0$.
\end{proof}

\section{Conjecture~\ref{Jac_conj2}}
In this section, we introduce 
\Cref{Jac_conj2}, which is motivated
by known results about the Newton polygon of a potential
counterexample to the Jacobian conjecture.  
We then 
 show that \Cref{Jac_conj2} implies \Cref{Jac_conj}.

\begin{definition}
For any nonnegative integers $m,n\in \mathbb{Z}$, we define
$$
\Rect_{m,n} =\textrm{ the convex hull of }\{(0,0),(m,0),(m,n),(0,n)\}.
$$
See Figure \ref{fig:NNN}.

\end{definition}

\begin{figure}[h]
\begin{center}
\begin{tikzpicture}[scale=0.3]
\begin{scope}[shift={(40,0)}]
\draw (0,8)--(4,8)--(4,0);
\fill[black!10] (0,0)--(0,8)--(4,8)--(4,0)--(0,0);
\draw (4, 8) node[anchor=west] {\tiny $(m,n)$};
\draw (4, 0) node[anchor=north] {\tiny $(m,0)$};
\draw (0, 8) node[anchor=east] {\tiny $(0,n)$};
\draw (2, 1) node[anchor=south] {\tiny $\Rect_{m,n}$};
\draw[->] (0,0) -- (6,0)
node[above] {\tiny $x$};
\draw[->] (0,0) -- (0,9)
node[right] {\tiny $y$};
\end{scope}
\end{tikzpicture}
\end{center}
	\caption{$\Rect_{m,n}$}
\label{fig:NNN}
\end{figure}

The following result is well-known; see \cite{Na1, ML1} and references therein.
\begin{theorem}\label{thm:trap}
Suppose that $f,g \in \C[x,y]$ form a Jacobian
pair. If the polynomial map $\RRR\to\RRR$
sending $(x,y)\mapsto (f,g)$ fails to be 
bijective, then there exists an automorphism 
$\xi \in \Aut(\RRR)$ such that 
the Newton polygon $N^0(\xi(f))$ 
contains a vertex $(m,n)$
	and $N^0(\xi(f))$ is 
	contained in the rectangle $\Rect_{m,n}$.
\end{theorem}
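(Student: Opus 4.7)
The plan is to prove this by a reduction argument on the Newton polygon using the action of the automorphism group $\Aut(\mathcal{R})$. By the Jung--van der Kulk theorem, $\Aut(\mathcal{R})$ is generated by affine automorphisms together with elementary (triangular) automorphisms of the form $(x,y)\mapsto(x+\varphi(y),y)$ and $(x,y)\mapsto(x,y+\psi(x))$, each of which acts predictably on Newton polygons. I would choose $\xi\in\Aut(\mathcal{R})$ minimizing $\deg(\xi(f))$; such a minimizer exists because $\deg$ takes values in $\mathbb{Z}_{\geq 0}$. After composing with a translation, one may assume $(0,0)\in N(\xi f)$, so $N^{0}(\xi f)=N(\xi f)$. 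The claim is that this minimizer $\xi$ already witnesses the rectangle property asserted by the theorem.

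The key structural input comes from the Jacobian relation. For any direction $w=(w_{1},w_{2})$ with $w_1,w_2 \in \mathbb{Z}_{>0}$ and $\gcd(w_{1},w_{2})=1$, the $w$-leading forms of $\xi(f)$ and $\xi(g)$ satisfy $[(\xi f)_{w},(\xi g)_{w}]=0$, and a classical argument going back to the references in the excerpt forces both to be scalar multiples of powers of a common $w$-homogeneous polynomial $\phi_{w}$, up to monomial factors in $x$ and $y$. Consequently, along every edge $E$ on the northeastern boundary of $N^{0}(\xi f)$, the leading form has the rigid shape $c\,x^{i}y^{j}\phi_{E}(x,y)^{k}$ with $\phi_{E}$ supported on a translate of $E$ of pure $w$-degree.

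Suppose, for contradiction, that $N^{0}(\xi f)$ admits no single NE vertex $(m,n)$ with $N^0(\xi f)\subseteq \Rect_{m,n}$. Then there is an NE edge $E$ whose outward normal $w$ has both components strictly positive and that is not contracted to such a vertex. Using the rigid description above, one constructs an elementary automorphism $\eta$ built from $\phi_{E}$ (schematically, $\eta(y)=y+(\text{monomial in }x)$, chosen to cancel the top $w$-graded piece of $\xi f$) such that the Newton polygon of $\eta(\xi f)$ strictly loses its portion along $E$; in particular, $\deg(\eta\circ\xi(f))<\deg(\xi(f))$, contradicting minimality. The hypothesis that $(x,y)\mapsto(f,g)$ is not bijective is used to exclude the degenerate minimum $\deg(\xi f)=1$: if $\xi f$ were linear, the condition $[\xi f,\xi g]\in\mathbb{C}^{\times}$ would force $(\xi f,\xi g)$ to be an automorphism, contradicting non-bijectivity of $(f,g)$.

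The main obstacle is to make the preceding reduction precise, i.e., to verify that the elementary automorphism built from $\phi_{E}$ genuinely reduces $\deg(\xi f)$. Elementary automorphisms generically enlarge the Newton polygon in some directions while shrinking it in others, and it is only the very rigid form of $(\xi f)_{w}$ imposed by the Jacobian relation that makes the net effect a strict decrease in the degree (or, if necessary, in a suitable lexicographic invariant on $N^{0}(\xi f)$). Handling the case of several competing NE edges and selecting the correct edge $E$ with a matching $\eta$ is the technical heart of the argument, and I would invoke it from \cite{Na1,ML1} rather than reproduce it in detail.
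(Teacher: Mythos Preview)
The paper does not give its own proof of this theorem; it is quoted as a known result with a citation to \cite{Na1,ML1}. So there is nothing in the paper to compare your argument against, and in the end your proposal does exactly what the paper does: it points to those references for the substantive work.

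That said, the sketch you give before deferring to \cite{Na1,ML1} has a genuine gap. Minimizing the $(1,1)$-degree over $\Aut(\mathcal{R})$ is not the right invariant for forcing the rectangle shape. The obstruction is that a northeastern edge of $N^0(\xi f)$ with outward normal $(w_1,w_2)$, $w_1,w_2>0$, need not be visible in the $(1,1)$-leading form at all. For a concrete model, take a polygon with vertices $(0,0),(4,0),(3,3),(0,4)$: the $(1,1)$-leading form is the single monomial at $(3,3)$, so no linear or elementary automorphism lowers the total degree, yet the polygon is not contained in $\Rect_{m,n}$ for any vertex $(m,n)$. Your proposed fix, an ``elementary automorphism built from $\phi_E$'', does not exist in general: for an edge with normal $(w_1,w_2)$ with $w_1,w_2\ge 2$, the $w$-homogeneous factor $\phi_E$ is typically of the form $\alpha x^{w_2}+\beta y^{w_1}$, and there is no polynomial automorphism of $\mathbb{C}[x,y]$ that kills such a factor. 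The actual arguments in \cite{Na1,ML1} do not proceed by minimizing total degree; they analyze the full edge structure of the Newton polygon directly under the constraint $[f_+^w,g_+^w]=0$ for all relevant directions $w$, together with the divisibility obstruction coming from the non-automorphism hypothesis, and use a different (finer) reduction. Your parenthetical ``or, if necessary, a suitable lexicographic invariant'' is where the real content lies, and it is not supplied here.
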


We have the following convenient variation of the above result.

\begin{corollary}\label{cor:trap}
Suppose that $f,g \in \C[x,y]$ form a Jacobian
pair. If the polynomial map $\RRR\to\RRR$
sending $(x,y)\mapsto (f,g)$ fails to be 
bijective, then there is an automorphism 
$\zeta \in \Aut(\RRR)$ such that 
the Newton polygon $N^0(\zeta(f))$ is precisely the 
rectangle $\Rect_{m,n}$ for some $m,n$.
\end{corollary}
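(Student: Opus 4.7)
The plan is to derive Corollary~\ref{cor:trap} from Theorem~\ref{thm:trap} by composing the automorphism it supplies with a generic affine translation. First I would invoke Theorem~\ref{thm:trap} to extract an automorphism $\xi \in \Aut(\mathcal{R})$ such that $(m,n)$ is a vertex of $N^0(\xi(f))$ and $N^0(\xi(f)) \subseteq \Rect_{m,n}$. Writing $\xi(f) = \sum_{i,j} c_{i,j}\, x^i y^j$, every support point then satisfies $0 \le i \le m$ and $0 \le j \le n$, and in particular the top-right coefficient $c_{m,n}$ is nonzero.

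Next I would consider the translation $\eta \in \Aut(\mathcal{R})$ defined by $\eta(x) = x + a$ and $\eta(y) = y + b$ for scalars $a, b \in \mathbb{C}$ to be chosen. Because $(x+a)^i (y+b)^j$ has $x$-degree exactly $i$ and $y$-degree exactly $j$, the substitution preserves both the $x$- and $y$-degree of every monomial; hence $N^0(\eta(\xi(f))) \subseteq \Rect_{m,n}$ as well, and the coefficient of $x^m y^n$ in $\eta(\xi(f))$ is still $c_{m,n} \ne 0$. A brief binomial expansion shows that the coefficient of $x^m$ in $\eta(\xi(f))$ equals the polynomial $p(b) = \sum_{j=0}^{n} c_{m,j}\, b^j$, while the coefficient of $y^n$ equals $q(a) = \sum_{i=0}^{m} c_{i,n}\, a^i$. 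Both polynomials have leading coefficient $c_{m,n} \ne 0$, hence are nonzero and admit only finitely many complex roots.

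The last step is then to choose $a, b \in \mathbb{C}$ avoiding the finite zero sets of $p$ and $q$. With this choice, $\eta(\xi(f))$ has nonzero monomials at each of the three nontrivial corners $(m,0)$, $(0,n)$, $(m,n)$ of $\Rect_{m,n}$, while $(0,0)$ lies in $N^0$ by definition. Combined with $N^0(\eta(\xi(f))) \subseteq \Rect_{m,n}$, this forces the equality $N^0(\eta(\xi(f))) = \Rect_{m,n}$. Setting $\zeta = \eta \circ \xi$, which is an automorphism as a composition of automorphisms, completes the argument with $(m',n') = (m,n)$.

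I do not expect any substantial obstacle: the essential geometric input is already packaged in Theorem~\ref{thm:trap}, and upgrading ``contained in'' to ``equal to'' is handled entirely by the generic translation that fills in the missing corners $(m,0)$ and $(0,n)$. The only mild caveat is the degenerate case $m = 0$ or $n = 0$, where $\Rect_{m,n}$ is a segment; there the inclusion is already an equality (since $N^0(\xi(f))$ is a convex subset of the segment containing both endpoints $(0,0)$ and $(m,n)$), so one may simply take $\zeta = \xi$.
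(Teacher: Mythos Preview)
Your argument is correct and is essentially the same approach as the paper's: starting from the automorphism $\xi$ supplied by Theorem~\ref{thm:trap}, one composes with a generic translation $x\mapsto x+a,\ y\mapsto y+b$ to force the corners $(m,0)$ and $(0,n)$ into the support. The paper states this in a single sentence without computing $p(b)$ and $q(a)$ explicitly, so your version simply fills in the details of why a generic choice of $(a,b)$ works.
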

\begin{proof}
Since the polynomial $\xi(f)$ in \Cref{thm:trap} contains
$(m,n)$ as the northeast-most vertex of its Newton polygon,
applying 
the automorphism $x\mapsto x+c_x, y\mapsto y+c_y$ (for generic $c_x,c_y\in\mathbb{C}$)  
to $\xi(f)$ (and in particular the monomial $x^m y^n$)
results in the Newton polygon $\Rect_{m,n}$.
\end{proof}

\begin{definition}\label{def:whomogeneous}
A nonzero element $(u,v)\in \Z^2$ is called a \emph{direction} if $\gcd(u,v)=1$ and $u>0$ or $v>0$.  Let $\mathcal{D}$ be the set of all directions. 
	To each such  direction we consider its $(u,v)$-grading on $\mathcal{R}$.
	So $\mathcal{R} = \oplus_{n\in \mathbb{Z}} \mathcal{R}^{(u,v)}_n$, where $\mathcal{R}^{(u,v)}_n$ (sometimes denoted by $\mathcal{R}_n$ if $(u,v)$ is clear from the context) is the $\mathbb{C}$-vector space generated by the monomials $x^iy^j$ with
$ui + vj = n$. 
A non-zero element $f$ of $\mathcal{R}_n^{(u,v)}$ is called a 
	\emph{$(u,v)$-homogeneous} element of $\mathcal{R}$, 
	and $n$ is called its \emph{$(u,v)$-degree}, denoted $(u,v)\text{-}\deg(f)$. 

We can write  any nonzero $f\in \mathcal{R}$ as a sum of 
	$(u,v)$-homogeneous polynomials $f=\sum_n f^{(u,v)}_n$,
	where $f^{(u,v)}_n\in \mathcal{R}^{(u,v)}_n$.  
	The element of highest $(u,v)$-degree in the homogeneous decomposition of
	$f$ is called its \emph{$(u,v)$-leading form} and is denoted by $f_+$. The
	$(u,v)$-degree of $f$ is by definition $(u,v)\text{-}\deg(f_+)$. 

We denote 
$\deg(f)=(1,1)\text{-}\deg(f)$, $\deg_x(f)=(1,0)\text{-}\deg(f)$, and $\deg_y(f)=(0,1)\text{-}\deg(f)$.

For a single or multivariable polynomial (or power series) $f=\sum_{\alpha}c_{\alpha}\mathbf{x}^\alpha$, denote the coefficient $[\mathbf{x}^\alpha]f=[f]_{\mathbf{x}^\alpha}=c_{\alpha}$. 

\end{definition}

\begin{definition}
	For each pair $(m,n)\in \mathbb{Z}_{>0}^2$, 
	
	let $R_{m,n}$ be the set of polynomials $f\in \mathcal{R}$ with $N^0(f)=\Rect_{m,n}$ and $[x^my^n]f=1$; 
	
	let $\overline{R}_{m,n}$ be the set of polynomials $f\in \mathcal{R}$ with $N^0(f)\subseteq\Rect_{m,n}$ and $[x^my^n]f\neq 0$.
 \end{definition} 

\begin{definition}
We set 
$$\mathcal{Q}=\{(a,b,m,n)\in \mathbb{Z}_{>0}^4\, : \,  \ a|m,\ a|n, \  \gcd(a,b)=1\text{ and }2\le a < b\}.$$
\end{definition}

Throughout this paper, we fix $(a,b,m,n)\in \mathcal{Q}$.

We now state a new
conjecture which we will subsequently show 
implies \Cref{Jac_conj} (see \Cref{BtoA}), 
and hence implies the Jacobian conjecture.

\begin{conjecturealpha}\label{Jac_conj2}
Suppose that $\ff,\GG\in \mathbb{C}[x,y]$ satisfy the following:

\noindent\emph{(1)} $[\ff,\GG]  \in  \mathbb{C}$;

\noindent\emph{(2)} $\ff\in R_{m,n}$ and $\GG\in R_{bm/a,bn/a}$.
(In particular, $\{(0,0),(0,1),(1,0)\}\subseteq N^0(\ff)$, and $N^0(\ff)$ is similar to $N^0(\GG)$ with
	the origin as center of similarity and with the ratio 
	$a : b$.)

\noindent Then  $[\ff, \GG]=0$. 
\end{conjecturealpha}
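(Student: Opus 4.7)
The plan is to argue by contradiction: assume $[\ff,\GG] = c \in \mathbb{C}\setminus\{0\}$ and extract an impossibility from the rigid structure forced by $N^0(\ff) = \Rect_{m,n}$ and $N^0(\GG) = \Rect_{bm/a, bn/a}$. For any direction $w = (u,v) \in \mathcal{D}$ with $u,v > 0$, the vertex $(m,n)$ is the unique point of $\Rect_{m,n}$ of maximal $w$-degree, so the $w$-leading form of $\ff$ is simply $x^m y^n$ and that of $\GG$ is $x^{bm/a} y^{bn/a}$. Hence $[\ff_+, \GG_+] = 0$, the top $w$-degree piece of $[\ff,\GG]$ vanishes automatically, and the constant $c$ must be assembled entirely out of lower-order interactions among many subleading monomials.

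The first concrete step is to apply the generalized Magnus formula (Theorem~\ref{Magnus_thm}, proved in \cite{HLLN}) with respect to such a direction $w$, setting $d = w\text{-}\deg(\ff) = um+vn$ and $e = w\text{-}\deg(\GG) = b(um+vn)/a$, so that $e/d = b/a$. Because the $w$-leading form of $\ff$ is a monomial, each fractional power $\ff^{(e-\gamma)/d}$ is well-defined by binomial expansion in a suitable completion, and the formula yields an asymptotic expansion $\GG \sim \sum_{\gamma\ge 0} c_\gamma \ff^{(e-\gamma)/d}$ with $c_0 \ne 0$. I would then exploit the bounds $\deg_x(\GG) \le bm/a$ and $\deg_y(\GG) \le bn/a$ coming from $N^0(\GG) \subseteq \Rect_{bm/a, bn/a}$, and try to argue inductively that polynomiality combined with the rectangular support constraint forces most of the $c_\gamma$ with $\gamma > 0$ to vanish. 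Repeating the analysis along several auxiliary directions $w$ (for instance $(1,0)$, $(0,1)$, and a direction calibrated to an inner vertex) should give simultaneous constraints that cut the expansion down to a very short tail.

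The main obstacle, which is the real heart of the conjecture, is then to show that this surviving short tail is incompatible with $[\ff,\GG]$ being a nonzero constant. Since $[\ff,\ff^{s}] = 0$ for every $s$, each fractional-power term contributes nothing to the bracket, so $c$ must originate from the polynomial correction $\GG - \sum c_\gamma \ff^{(e-\gamma)/d}$ (truncated and interpreted appropriately). The rectangle constraints squeeze this correction to be small, but the bracket can amplify it, and ruling out every admissible correction that could produce a genuine nonzero constant in $[\ff,\GG]$ is precisely where the difficulty lies. This is exactly where I expect the inner and innermost polynomial machinery developed in the later sections of the paper to become essential; without that additional structure I do not see how to close the argument, which is consistent with the statement being posed as a conjecture rather than a theorem.
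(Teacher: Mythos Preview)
This statement is Conjecture~\ref{Jac_conj2}, and the paper does \emph{not} prove it: it is presented as an open conjecture equivalent (via Proposition~\ref{BtoA} and the earlier equivalence) to the two-dimensional Jacobian conjecture.  What the paper actually establishes around it is only the implication Conjecture~\ref{Jac_conj2} $\Rightarrow$ Conjecture~\ref{Jac_conj}, together with the further chain Conjecture~\ref{Jac_conj4} $\Rightarrow$ Conjecture~\ref{Jac_conj3} $\Rightarrow$ Conjecture~\ref{Jac_conj2} and the partial results Theorem~\ref{main_thm} and Corollary~\ref{main_cor} on the location of the inner/innermost vertex.  So there is no ``paper's own proof'' of this statement to compare against.

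Your proposal is, appropriately, not a proof either, and you say so explicitly in the final paragraph.  As a strategic outline it is in fact quite close to the paper's program: invoking the generalized Magnus expansion $G\sim\sum_{\gamma\ge 0}c_\gamma F^{(e-\gamma)/d}$, using the rectangular support constraints on $N^0(F)$ and $N^0(G)$ to suppress the $\gamma>0$ terms, and then trying to show the remaining correction cannot produce a nonzero constant Jacobian.  That is precisely the philosophy stated in the introduction and pursued through the inner-polynomial machinery of \S4--\S6.  Where you locate the genuine gap---controlling the polynomial correction $G-\sum c_\gamma F^{(e-\gamma)/d}$ well enough to force $[F,G]=0$---is exactly the obstacle the paper does not overcome; its main theorem only pins down the NE vertex of the innermost polynomial $Z$ to a narrow region, which is a step toward, not a resolution of, Conjecture~\ref{Jac_conj2}.
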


\begin{proposition}\label{BtoA}
Conjecture~\ref{Jac_conj2} implies Conjecture~\ref{Jac_conj}. 
\end{proposition}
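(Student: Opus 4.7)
The plan is to argue by contradiction: assume Conjecture~\ref{Jac_conj2}, let $F,G$ satisfy (1), (2) of Conjecture~\ref{Jac_conj} with $[F,G]\in\mathbb{C}\setminus\{0\}$, and construct from these a pair violating Conjecture~\ref{Jac_conj2}.

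First I would observe that the polynomial map $(x,y)\mapsto(F,G)$ is not bijective: since $\deg(F):\deg(G)=a:b$ in lowest terms with $2\le a<b$, neither degree divides the other, so by the contrapositive of Theorem~\ref{thm:auto} and Theorem~\ref{thm:BBR}, the map fails to be bijective. Apply Corollary~\ref{cor:trap} to obtain $\zeta\in\Aut(\mathcal{R})$ with $N^0(\zeta F)=\Rect_{m,n}$ for some $m,n$. The pair $(\zeta F,\zeta G)$ remains a non-bijective Jacobian pair: its Jacobian $[\zeta F,\zeta G]=[\zeta(x),\zeta(y)]\cdot\zeta([F,G])$ is a nonzero constant, and non-bijectivity is preserved because the new polynomial map is the original composed with the bijection of $\mathbb{C}^2$ induced by $\zeta$. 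Non-bijectivity forces $\deg(\zeta F),\deg(\zeta G)\ge 2$ and hence $m,n\ge 1$; Proposition~\ref{prop:similar} then yields $N^0(\zeta G)=\Rect_{M,N}$ with $(M,N)=\lambda(m,n)$, $\lambda=\deg(\zeta G)/\deg(\zeta F)$.

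Write $\lambda=b'/a'$ in lowest terms, so that $\gcd(a',b')=1$ and, by integrality of $M,N$, $a'\mid m$ and $a'\mid n$. In the favorable case $2\le a'<b'$ (or, after exchanging $F\leftrightarrow G$, $2\le b'<a'$), the tuple $(a',b',m,n)$ lies in $\mathcal{Q}$. Normalizing leading coefficients, set $F^*:=\zeta F/[x^m y^n](\zeta F)$ and $G^*:=\zeta G/[x^M y^N](\zeta G)$; then $F^*\in R_{m,n}$, $G^*\in R_{b'm/a',\,b'n/a'}$, and $[F^*,G^*]\in\mathbb{C}\setminus\{0\}$, directly contradicting Conjecture~\ref{Jac_conj2}.

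The remaining case is $a'=1$ or $b'=1$, where one degree divides the other. Here I would use a Magnus-type reduction: since the Newton polygons are exact rectangles, the $(1,1)$-leading form of $\zeta G$ is a monomial equal to a scalar multiple of $(\zeta F)^{b'}$ (assuming $a'=1$); subtracting this multiple yields a Jacobian pair of strictly smaller total degree whose polynomial map remains non-bijective, since the subtraction corresponds to post-composing with the $\mathbb{C}^2$-automorphism $(y_1,y_2)\mapsto(y_1,y_2-cy_1^{b'})$. Re-applying Corollary~\ref{cor:trap} and inducting on $\deg(\zeta F)+\deg(\zeta G)$ reduces to the favorable case, with termination guaranteed because a non-bijective Jacobian pair has both degrees at least $2$. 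The main obstacle I foresee is executing this reduction/induction cleanly; a shorter alternative would be to verify that the $\zeta$ produced by Corollary~\ref{cor:trap} already preserves the degree ratio $a:b$ (analyzing the decomposition $\zeta=\tau\circ\xi$ from its proof), allowing one to bypass the induction entirely.
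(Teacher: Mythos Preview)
Your approach matches the paper's: argue by contradiction, use Theorem~\ref{thm:auto} and Theorem~\ref{thm:BBR} to obtain non-bijectivity, invoke Corollary~\ref{cor:trap} to make $N^0(\zeta F)$ a rectangle, normalize, and feed the result into Conjecture~\ref{Jac_conj2}. You are in fact more careful than the paper at one step: after applying $\zeta$ the paper writes ``Therefore by our hypothesis, $N^0(g)=\Rect_{bm/a,bn/a}$,'' tacitly assuming the ratio $a:b$ survives the automorphism, whereas you correctly note that the new ratio $a':b'$ could differ and handle the residual case $a'=1$ or $b'=1$ by a subtraction-and-descent argument.

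One small repair to that descent: drop the phrase ``re-applying Corollary~\ref{cor:trap}.'' That corollary does not bound the size of the resulting rectangle, so a fresh automorphism could in principle increase degrees and break your induction on $\deg(\zeta F)+\deg(\zeta G)$; it is also unnecessary. After the replacement $\zeta G\mapsto \zeta G-c(\zeta F)^{b'}$ both degrees remain at least~$2$ (otherwise the pair would be bijective or have zero Jacobian), so Proposition~\ref{prop:similar} already forces the new $N^0(\zeta G)$ to be a rectangle similar to $\Rect_{m,n}$. You may therefore iterate without any further automorphism; the sum of degrees strictly decreases, and termination lands you in the favorable case $2\le a'<b'$ (up to swapping). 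Your proposed ``shorter alternative'' of showing that $\zeta$ itself preserves the ratio does not work in general, since the elementary shears underlying Theorem~\ref{thm:trap} can change both degrees.
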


\begin{proof}
Assume \Cref{Jac_conj2} is true. We want to prove \Cref{Jac_conj}.
So consider $f,g\in \mathbb{C}[x,y]$ such that properties (1)
and (2) of \Cref{Jac_conj} hold.  That is, we have 
$[f,g]\in \mathbb{C}$, $\{(0,0),(0,1),(1,0)\} \subseteq N^0(f)$,
and $N^0(f)$ and $N^0(g)$ are similar with the origin
as center of similarity and with similarity ratio $\deg(f):\deg(g) = a:b$,
with $a$ and $b$ relatively prime with $2 \leq a<b$.

Suppose that 
 $[f,g]\in \C\setminus \{0\}$.  
	If the homomorphism $\pi: \mathcal{R} \to \mathcal{R}$ given by 
	$\pi(x)=f$ and $\pi(y)=g$ is an automorphism, 
	then by \Cref{thm:auto}, 
	either $\deg(f) \vert \deg(g)$ or $\deg(g) \vert \deg(f)$,
	contradicting our hypothesis that $\deg(f):\deg(g)=a:b$.
Therefore $\pi$ fails to be an automorphism, and 
	hence by \Cref{thm:BBR} it fails to be bijective. 
	By 
\Cref{cor:trap},
we can apply an automorphism if needed so as 
to assume that 
$N^0(f) = \Rect_{m,n}$ for some $m,n$.  Therefore by our hypothesis, 
	$N^0(g) = \Rect_{bm/a, bn/a}$.

By multiplying each of $f$ and $g$ by some nonzero constants if necessary, we can assume that 
	$[x^m y^n]f=1$ and $[x^{bm/a} y^{bn/a}]g=1$.  Therefore $f\in R_{m,n}$ and  $g\in R_{bm/a, bn/a}$.  By \Cref{Jac_conj2},  $[f,g]=0$.
	This shows that  Conjecture~\ref{Jac_conj2} implies Conjecture~\ref{Jac_conj}.
\end{proof}

\section{A step towards reducing $F$ to a one-variable polynomial}
A possible proof of Conjecture~\ref{Jac_conj2} is to show that 
if $F$ satisfies the hypotheses of \Cref{Jac_conj2}, then 
there is a nontrivial way to write $F$ as a specialization of a one-variable nonlinear 
polynomial,
that is, $F=\alpha(W^\circ)$ for some polynomial $W^\circ\in \mathcal{R}$ and 
one-variable polynomial $\alpha(z)\in \mathbb{C}[z]$ with $\deg(\alpha)\ge2$.  
 \Cref{FG00} shows that in this case,
if $[F,G]\in \C$, then in fact $[F,G]=0$.

In this section we will construct a polynomial $W_F$ that we call 
the \emph{$F$-generator}, which is the most natural candidate for $W^\circ$. In particular, we will prove \Cref{existence_of_Ec}, which shows that 
this candidate exists.  We will eventually prove that there exists a one-variable
nonlinear polynomial $\alpha_F$ such that 
$F=\alpha_F(W_F)$.


\subsection{Principal polynomials}

There are in general 
many ways to write a polynomial 
$f\in \mathcal{R}$  
as $\alpha(W)$ for $W\in \mathcal{R}$ and $\alpha(z)\in \mathbb{C}[z]$.
For example, for any 
$f\in \mathcal{R}$  
and $c\in \C$, if we set $W_c=f-c$ and $\alpha_c=z+c$, 
then we have $f=\alpha_c(W_c)$.
However, if we require that 
the subleading coefficient of $\alpha(z)$ vanishes, we significantly reduce the number of choices.

\begin{definition}\label{T}
Let $\mathbb{T}\subseteq \mathbb{C}[z]$ be the set of  \emph{Tschirnhausen polynomials}, that is, 
$$\mathbb{T}=\{ \g(z)=z^k + e_{k-1} z^{k-1} + \cdots + e_0 z^0 \in \mathbb{C}[z] \, : \, k\in \mathbb{Z}_{>0}, \, e_{k-1}=0,\text{ and }e_{k-2},\cdots,e_0\in \mathbb{C} \}.$$ 
\end{definition}

\begin{lemma}
If we can write 
$f=\alpha(W)$ 
where $\alpha(z)\in \C[z]$ and $W\in \RRR$,
then we can write 
$f=\alpha'(W')$ where $\alpha'(z)\in \T$ has the same degree as $\alpha(z)$
and $W'\in \RRR$.
\end{lemma}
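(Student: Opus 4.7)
The plan is a standard two-step normalization: first rescale $W$ so that the univariate polynomial becomes monic, then perform a Tschirnhaus shift to annihilate the subleading coefficient.

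First I would write $\alpha(z)=c_k z^k+c_{k-1}z^{k-1}+\cdots+c_0$ with $c_k\neq 0$ and $k=\deg\alpha$ (the assertion is vacuous if $k=0$, since $\mathbb{T}$ contains only positive-degree polynomials, so we may assume $k\ge 1$). Choosing any $\lambda\in\mathbb{C}$ with $\lambda^k=c_k$, which exists because $\mathbb{C}$ is algebraically closed, I would set $\tilde\alpha(z):=\alpha(z/\lambda)$ and $\widetilde W:=\lambda W\in\mathcal{R}$. Then $\tilde\alpha(\widetilde W)=\alpha(W)=f$, the degree is preserved, and $\tilde\alpha(z)=z^k+d_{k-1}z^{k-1}+\cdots+d_0$ is monic.

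Next I would apply the classical Tschirnhaus substitution to $\tilde\alpha$. Set $\alpha'(z):=\tilde\alpha\!\left(z-\tfrac{d_{k-1}}{k}\right)$ and $W':=\widetilde W+\tfrac{d_{k-1}}{k}\in\mathcal{R}$. By construction, $\alpha'(W')=\tilde\alpha(\widetilde W)=f$, and $\deg\alpha'=\deg\tilde\alpha=k$. Expanding using the binomial theorem, the coefficient of $z^{k-1}$ in $\alpha'(z)$ equals $-k\cdot\tfrac{d_{k-1}}{k}+d_{k-1}=0$, so $\alpha'\in\mathbb{T}$.

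There is no real obstacle here: both steps are elementary manipulations with univariate polynomials over $\mathbb{C}$, and the required cancellation of the subleading coefficient is forced by the choice of shift $d_{k-1}/k$. The only item worth a sentence of care is the edge case $k=1$, where $\alpha(z)=c_1z+c_0$: the procedure gives $\alpha'(z)=z$ (which lies in $\mathbb{T}$) and $W'=c_1W+c_0=f$, so $\alpha'(W')=f$ as required.
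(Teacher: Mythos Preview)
Your proof is correct and is essentially the same as the paper's: both first rescale $W$ to make $\alpha$ monic and then shift $W$ by a constant to kill the subleading coefficient. The paper phrases the shift as subtracting the average $s=\tfrac{1}{k}\sum r_i$ of the roots of the (monic) $\alpha$, which by Vieta's formulas equals $-d_{k-1}/k$, i.e.\ exactly your Tschirnhaus substitution.
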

\begin{proof}
Suppose that $f=\alpha(W)$ for some polynomial $\alpha(z)$ of degree $k$. 
By multiplying $W$ by a constant if necessary,
we can assume $[z^k] \alpha(z)=1$.
Factor $\alpha(z)=(z-r_1)\dots (z-r_k)$ and let 
	$s:=\frac{1}{k} \sum_{i=1}^k r_i$.
	Set $\alpha'(z)=(z-(r_1-s))\dots (z-(r_k-s))$, and $W':=W-s$.
	Then $\alpha'(z)$ lies in $\T$, and 
	$\alpha'(W')=\alpha(W)=f$.
\end{proof}

\begin{definition}
Given
	a polynomial $f\in \mathcal{R}$, we let 
	$$\mathcal{W}(f)=\{ W\in \mathcal{R} \, : \, f=\g(W)\text{ for some }\g(z)\in \mathbb{T}\}.$$
	Since we can always write   $f=\alpha(W)$ for $\alpha(z)=z\in \mathbb{T}$ and $W=f$,
	we have that  $f\in \mathcal{W}(f)$.  
	 If 
	$f\in \mathcal{R}$ has the property $\mathcal{W}(f) = \{f\}$, we
	call $f$
	a \emph{principal} polynomial.\footnote{In \cite{N0}, Nowicki refers
	to such polynomials as \emph{closed}. 
	More specifically, he calls a polynomial $f$ \emph{closed} if
	$\{g\in k[x,y]  \ \vert \ [f,g]=0\} = k[f]$.
	}
\end{definition}

\begin{lemma}\label{lem:unique}
Let $f\in \RRR$.  If $f=\alpha(W)$ where $\alpha(z)\in \C[z]$ and $W\in \RRR\setminus \C$,
and also $f=\beta(W)$ where $\beta(z)\in \C[z]$, then $\alpha(z)=\beta(z)$.
\end{lemma}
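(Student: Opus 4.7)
The plan is to reduce the lemma to the statement that a non-constant polynomial $W\in\RRR=\C[x,y]$ is transcendental over $\C$. First I would set $\gamma(z):=\alpha(z)-\beta(z)\in\C[z]$. From the hypothesis $f=\alpha(W)=\beta(W)$, evaluating both expressions in $\RRR$ gives $\gamma(W)=\alpha(W)-\beta(W)=0$ in $\RRR$. So $W$ is a root of $\gamma(z)$ inside the $\C$-algebra $\RRR$.

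Next I would verify that any $W\in\RRR\setminus\C$ is transcendental over $\C$. One clean way: fix a direction $(u,v)\in\mathcal D$ (for instance $(1,1)$) for which the $(u,v)$-degree of $W$ is strictly positive; this is possible because $W$ is non-constant. Then for any polynomial $p(z)=\sum_{i=0}^k c_i z^i\in\C[z]$ with $c_k\neq 0$ and $k\geq 1$, the element $p(W)=\sum c_i W^i$ has $(u,v)$-leading form equal to $c_k W_+^k$, which is nonzero in the integral domain $\RRR$. Hence $p(W)\neq 0$, so $W$ satisfies no nontrivial polynomial relation over $\C$.

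Applying this to $\gamma(z)$: since $\gamma(W)=0$ and $W$ is transcendental over $\C$, we must have $\gamma(z)=0$ as an element of $\C[z]$, i.e. $\alpha(z)=\beta(z)$. This completes the proof. There is no real obstacle here; the only care needed is in verifying the transcendence of $W$, which follows from the elementary leading-form argument above and uses the hypothesis $W\notin\C$ in an essential way (if $W$ were constant, $\alpha$ and $\beta$ would only need to agree at a single point).
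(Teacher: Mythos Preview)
Your proof is correct and follows essentially the same approach as the paper: set $\gamma=\alpha-\beta$, observe $\gamma(W)=0$, and use that a non-constant $W$ cannot satisfy a nontrivial polynomial relation over $\C$ to conclude $\gamma=0$. The paper's proof is terser---it simply asserts that $W\notin\C$ forces every coefficient of $\gamma$ to vanish---whereas you spell out the transcendence via a leading-form argument, but the underlying idea is identical.
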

\begin{proof}
Let $\gamma(z)=\alpha(z)-\beta(z)$.  
Since $\alpha(W)=\beta(W)$, we have $\gamma(W)$=0.  Since $W$ is not a constant,
this implies that every coefficient of $\gamma$ must vanish, so $\alpha=\beta$.
\end{proof}

\begin{theorem}\cite[Theorem 1.4]{N0}\label{thm:N0}
	If $f,g$ are nonconstant polynomials in $\mathcal{R}$ such that $[f,g]=0$, then
	there exists $h\in \mathcal{R}$ and $\alpha(z), \beta(z)\in \mathbb{C}[z]$
	such that $f=\alpha(h)$ and $g=\beta(h)$.
\end{theorem}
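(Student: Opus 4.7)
The plan is to isolate the ring $B := \C(f,g) \cap \mathcal{R}$ inside $\mathcal{R}$ and show that it is a polynomial ring $\C[h]$ in one variable; once this is established, the inclusions $f,g \in B$ immediately yield $\alpha,\beta \in \C[z]$ with $f=\alpha(h)$ and $g=\beta(h)$.

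First I would apply the classical Jacobian criterion for algebraic dependence in characteristic zero: since $[f,g]=0$ and $f,g$ are both nonconstant, they must be algebraically dependent over $\C$, so $L := \C(f,g)$ has transcendence degree one over $\C$. I would then invoke the two-variable L\"uroth theorem --- every transcendence-degree-one subfield of $\C(x,y)$ containing $\C$ is purely transcendental (a characteristic-zero phenomenon sometimes attributed to Gordan or Castelnuovo) --- to write $L = \C(t)$ for some $t \in \C(x,y)$; the explicit form of $t$ will be irrelevant below.

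Next I would analyze $B = L \cap \mathcal{R}$. By construction $B \supseteq \C[f,g]$, so $\mathrm{Frac}(B) = L$, and because $\mathcal{R}$ is integrally closed in $\C(x,y)$, $B$ is integrally closed in $L$. Zariski's finiteness theorem (applicable since $\mathrm{trdeg}_\C L \leq 2$) then guarantees that $B$ is a finitely generated $\C$-algebra, so $C := \mathrm{Spec}(B)$ is a smooth affine curve, and the inclusion $B \hookrightarrow \mathcal{R}$ induces a dominant morphism $\phi : \mathbb{A}^2 \to C$. Since $\mathbb{A}^2$ is rational, $C$ is unirational, and hence rational by one-variable L\"uroth; therefore the smooth projective completion of $C$ is $\mathbb{P}^1$ and $C = \mathbb{P}^1 \setminus \{P_1,\dots,P_p\}$ for some integer $p \geq 1$.

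The decisive step is to rule out $p \geq 2$. If $p \geq 2$, an automorphism of $\mathbb{P}^1$ carrying $P_1, P_2$ to $0, \infty$ embeds $C$ as an open subscheme of $\mathbb{A}^1 \setminus \{0\}$, and composing with $\phi$ produces a dominant morphism $\mathbb{A}^2 \to \mathbb{A}^1 \setminus \{0\}$, equivalently a non-constant unit of $\mathcal{R}$. But $\mathcal{R}^\times = \C^\times$, a contradiction. Therefore $p = 1$, $C \cong \mathbb{A}^1$, and $B = \C[h]$ for some $h \in \mathcal{R}$. I expect the step requiring the most care to be the invocation of Zariski's finiteness theorem to secure finite generation of $B$; the Jacobian-to-algebraic-dependence step and the two-variable L\"uroth theorem are classical but each merits a pointer to the literature.
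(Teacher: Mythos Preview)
The paper does not supply a proof of this theorem; it is quoted from Nowicki \cite{N0} without argument. Your proof is correct and follows one of the standard routes to this result: use the Jacobian criterion to obtain algebraic dependence, form $B=\C(f,g)\cap\mathcal{R}$, invoke Zariski's finiteness theorem for intermediate fields of transcendence degree at most two to see that $B$ is a finitely generated $\C$-algebra, observe that $B$ is normal of Krull dimension one so that $\mathrm{Spec}(B)$ is a smooth affine curve dominated by $\mathbb{A}^2$, deduce rationality, and use $\mathcal{R}^\times=\C^\times$ to rule out more than one puncture on the projective model.

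Two small remarks on presentation. First, your early invocation of the two-variable L\"uroth theorem (writing $L=\C(t)$) is never used afterwards and can be deleted, since the rationality of $C$ is re-derived later from the dominant map $\mathbb{A}^2\to C$. Second, the phrase ``$C$ is unirational, and hence rational by one-variable L\"uroth'' tacitly uses that a nonconstant morphism $\mathbb{A}^2\to C$ restricts nonconstantly to some affine line in $\mathbb{A}^2$, thereby producing a dominant map $\mathbb{A}^1\to C$ to which the classical L\"uroth theorem applies; this reduction deserves one sentence.
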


The following corollary is immediate from \Cref{thm:N0}.
\begin{corollary}
Suppose that $f\in \mathcal{R}$ is a principal polynomial.  Then 
if $g\in \mathcal{R}$ is a nonconstant polynomial satisfying $[f, g]=0$, 
there exists $\beta(z)\in \mathbb{C}[z]$ such that $g=\beta(f)$. 
\end{corollary}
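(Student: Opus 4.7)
The plan is to invoke Theorem \ref{thm:N0} and then use the principality of $f$ to force the auxiliary polynomial that Nowicki's theorem produces to coincide with $f$ up to an affine change of variables.

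First I would observe that a principal polynomial is necessarily nonconstant: if $f = c \in \mathbb{C}$, then for any $\gamma(z) \in \mathbb{T}$ of degree at least $2$ and any constant $c' \in \mathbb{C}$ with $\gamma(c') = c$, one has $c' \in \mathcal{W}(c)$. Since many such $c'$ exist, $\mathcal{W}(c) \neq \{c\}$, so the hypothesis that $\mathcal{W}(f) = \{f\}$ forces $f$ to be nonconstant. (Note also that we have implicitly used $g$ nonconstant, which is given.)

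Next I would apply Theorem \ref{thm:N0} to the pair $(f, g)$ to obtain $h \in \mathcal{R}$ and polynomials $\alpha(z), \beta(z) \in \mathbb{C}[z]$ with $f = \alpha(h)$ and $g = \beta(h)$. Since $f$ is nonconstant, so is $h$. The preceding Tschirnhausen normalization lemma then produces $\alpha'(z) \in \mathbb{T}$ of the same degree as $\alpha$ together with an affine transformation $h' = c h - s$ (for some $c \in \mathbb{C}\setminus\{0\}$ and $s \in \mathbb{C}$) such that $f = \alpha'(h')$. In particular $h' \in \mathcal{W}(f)$, and the principality hypothesis $\mathcal{W}(f) = \{f\}$ forces $h' = f$, i.e., $h = (f+s)/c$.

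Finally, substituting into $g = \beta(h)$ yields $g = \beta((f+s)/c) = \beta'(f)$, where $\beta'(z) := \beta((z+s)/c) \in \mathbb{C}[z]$, which is the conclusion. I do not expect any serious obstacle: the substantive content is carried entirely by Theorem \ref{thm:N0}, and the definition of a principal polynomial is precisely what is needed to identify the intermediate $h$ from Nowicki's theorem with $f$ (up to affine change), after which everything is a short substitution.
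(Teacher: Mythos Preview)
Your proof is correct and is exactly the argument the paper has in mind: the paper simply declares the corollary ``immediate from \Cref{thm:N0},'' and you have supplied the (easy) details, namely that principality makes $f$ nonconstant so Nowicki's theorem applies, and that after Tschirnhausen normalization the intermediate $h$ must coincide with $f$ up to an affine substitution, whence $g$ is a polynomial in $f$.
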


\begin{example}
Let $f(x,y)=(x^2y+x^2+y)^{12}+(x^2y+x^2+y)^4+1$. Here are some examples of pairs 
	$(W(x,y),\alpha(z))$ such that $f(x,y)=\alpha(W(x,y))$:

	$W=f$, $\g(z)=z$;

	$W=\omega(x^2y+x^2+y)^4$, $\g(z)=z^3+\omega^{-1}z+1$, where $\omega^3=1$;

	$W=\omega (x^2y+x^2+y)^2$, $\g(z)=z^6+\omega^{-2}z^2+1$,  where $\omega^6=1$;

	$W=\omega (x^2y+x^2+y)$, $\g(z)=z^{12}+\omega^{-4}z^4+1$,  where $\omega^{12}=1$.

Note that the polynomial $(x^2y+x^2+y) \in 
	\mathcal{W}(f)$ 
	is itself a principal polynomial.
\end{example}

\begin{lemma}\label{FG00}
Assume that $[F,G]  \in  \mathbb{C}$. Suppose that $F$ is not principal, that is, $F=\alpha(W^\circ)$ for some $W^\circ\in \mathcal{R}$ and some $\alpha(z)\in \mathbb{C}[z]$ with $\deg(\alpha)\ge2$. Then $[\ff,\GG]=0$.
\end{lemma}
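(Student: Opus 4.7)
The plan is a short application of the chain rule combined with the fact that $\mathcal{R}=\mathbb{C}[x,y]$ is an integral domain whose units are exactly the nonzero elements of $\mathbb{C}$. First, I would record the chain-rule identity for the Jacobian bracket: since $F=\alpha(W^\circ)$, both partial derivatives factor as $\partial F/\partial x=\alpha'(W^\circ)\cdot\partial W^\circ/\partial x$ and $\partial F/\partial y=\alpha'(W^\circ)\cdot\partial W^\circ/\partial y$, so pulling the common scalar $\alpha'(W^\circ)$ out of the first column of the defining $2\times 2$ determinant gives
\[
[F,G]\;=\;\alpha'(W^\circ)\cdot[W^\circ,G].
\]

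Next, I would verify that $\alpha'(W^\circ)$ is a nonzero, non-constant element of $\mathcal{R}$. If $F\in\mathbb{C}$, then $[F,G]=0$ holds trivially, so I may assume $F$ is non-constant; this forces $W^\circ$ to be non-constant, since otherwise $\alpha(W^\circ)$ would itself be constant. Because $\deg\alpha\ge 2$, the formal derivative $\alpha'(z)\in\mathbb{C}[z]$ is a nonzero polynomial of degree at least $1$. Substituting the non-constant (hence transcendental over $\mathbb{C}$) polynomial $W^\circ$ into $\alpha'$ then produces an element of $\mathcal{R}$ of positive total degree, hence non-constant and nonzero.

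Finally, I would conclude from the displayed identity. The hypothesis $[F,G]\in\mathbb{C}$ says that the product $\alpha'(W^\circ)\cdot[W^\circ,G]$ lies in $\mathbb{C}$. In the integral domain $\mathcal{R}$, a product can be a nonzero constant only when both factors are nonzero constants, which fails here since $\alpha'(W^\circ)$ is non-constant. Therefore the product must equal $0$, and cancelling the nonzero factor $\alpha'(W^\circ)$ yields $[W^\circ,G]=0$, whence $[F,G]=0$. The only conceivable obstacle is the non-vanishing and non-constancy check on $\alpha'(W^\circ)$, which is really bookkeeping; the substantive content of the lemma is entirely the chain-rule identity.
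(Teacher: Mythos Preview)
Your proof is correct and follows essentially the same route as the paper: the chain-rule identity $[F,G]=\alpha'(W^\circ)\,[W^\circ,G]$, the observation that $\alpha'(W^\circ)$ is non-constant when $W^\circ\notin\mathbb{C}$ and $\deg\alpha\ge 2$, and the conclusion that a constant product with a non-constant factor in $\mathcal{R}$ forces $[W^\circ,G]=0$. Your write-up is slightly more explicit about why $\alpha'(W^\circ)$ is non-constant, but the argument is the same.
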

\begin{proof}
If $W^\circ\in \mathbb{C}$ then $F=\alpha(W^\circ)\in \mathbb{C}$, so $[\ff,\GG]=0$.

Suppose $W^\circ\in \mathcal{R}\setminus \mathbb{C}$. Since $F=\alpha(W^\circ)$, we get
$$
[F, G]  = \frac{d\g}{dz}(W^\circ) [W^\circ, G].
$$
Since $\deg\g>1$, we
	have $\frac{d\g}{dz}(W^\circ)
	\in \mathcal{R}\setminus \mathbb{C}$.
	Since $[F,G]\in \mathbb{C}$,
	we must have $[W^\circ,G]=0$, thus $[\ff, G]=0$. 
\end{proof}

\begin{remark}
Lemma~\ref{unique_principal} in Appendix A shows that $\mathcal{W}(f)$ contains a unique principal polynomial up to roots of unity. 
\end{remark}

\subsection{Construction of the $F$-generator $W_F$}

Let $(a, b, m, n) \in \mathcal{Q}$. 
For $\ff\in R_{m,n}$, denote $A=(m,n)\in \mathbb{R}^2$, 
$\mathcal{N}'=\Rect_{{m}/{a},n/a}$,
 and $\mathcal{N}''=\mathcal{N}'+\frac{a-1}{a}\overrightarrow{OA}$.
See Figure \ref{fig:example}.

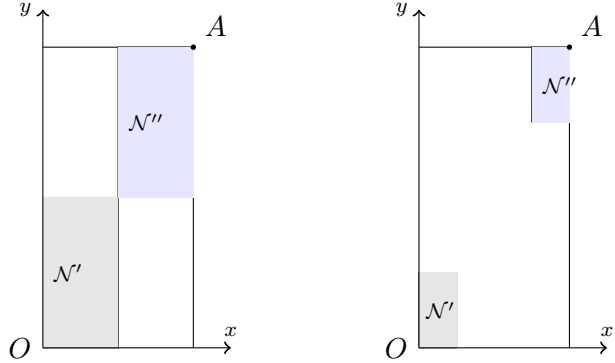
\begin{figure}[h]
\begin{center}
\begin{tikzpicture}[scale=0.50]
\usetikzlibrary{patterns}
\draw (0,8)--(4,8)--(4,0);
\draw (0,0)--(0,4)--(2,4)--(2,0)--(0,0);
\fill[black!10] (0,0)--(0,4)--(2,4)--(2,0)--(0,0);
\draw (0,2) node[anchor=west] {\tiny $\mathcal{N}'$};
\draw (2,4)--(2,8)--(4,8)--(4,4)--(2,4);
\fill[blue!10] (2,4)--(2,8)--(4,8)--(4,4)--(2,4);
\draw (2,6) node[anchor=west] {\tiny $\mathcal{N}''$};
\draw (4,8) node {\huge .};
\draw (4, 8) node[anchor=south west] {\small $A$};
\draw (0, 0) node[anchor=east] {\small $O$};
\draw[->] (0,0) -- (5,0)
node[above] {\tiny $x$};
\draw[->] (0,0) -- (0,9)
node[left] {\tiny $y$};

\begin{scope}[shift={(10,0)}]
\usetikzlibrary{patterns}
\draw (0,8)--(4,8)--(4,0);
\draw (0,0)--(0,2)--(1,2)--(1,0)--(0,0);
\fill[black!10] (0,0)--(0,2)--(1,2)--(1,0)--(0,0);
\draw (-.1,1) node[anchor=west] {\tiny $\mathcal{N}'$};
\draw (3,6)--(3,8)--(4,8)--(4,6)--(3,6);
\fill[blue!10] (3,6)--(3,8)--(4,8)--(4,6)--(3,6);
\draw (3,7) node[anchor=west] {\tiny $\mathcal{N}''$};
\draw (4,8) node {\huge .};
\draw (4, 8) node[anchor=south west] {\small $A$};
\draw (0, 0) node[anchor=east] {\small $O$};
\draw[->] (0,0) -- (5,0)
node[above] {\tiny $x$};
\draw[->] (0,0) -- (0,9)
node[left] {\tiny $y$};
\end{scope}
\end{tikzpicture}
\end{center}
\caption{$N^0(F)$, $\mathcal{N}'=N^0(Q)$, and $\mathcal{N}''$ for  $a=2$ (left) and $a=4$ (right).}
\label{fig:example}
\end{figure}

\begin{theorem}\label{existence_of_Ec}
Let $(a,b,m,n)\in\mathcal{Q}$.
For each $F\in R_{m,n}$, there exists a unique pair $(\beta, W_F)\in \T \times  \mathcal{R}$ satisfying the following:

\noindent $\bullet$ $W_F$ is a principal polynomial and lies in 
	$R_{m/(a\delta), n/(a\delta)}$,
where 
 $\delta$  divides both
	 $\frac{m}{a}$ 
	and $\frac{n}{a}$.

\noindent $\bullet$ $\beta(z)$ has degree $\delta$ and ${\rm supp}(\ff-(\beta(W_F))^a)\subseteq N^0(\ff)\setminus \mathcal{N}''$.

	The polynomial $W_F$ will be called the \emph{$F$-{generator}} 
	or the \emph{generator for }$F$.
\end{theorem}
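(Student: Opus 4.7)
The plan is to split the construction into two stages: first produce an auxiliary polynomial $H$ with $H^a$ matching $F$ on $\mathcal{N}''$, and then factor $H=\beta(W_F)$ using the principal polynomial theory developed in the previous subsection. For Stage~1, I would prove that there is a unique $H\in\mathcal{R}$ with $\supp(H)\subseteq\Rect_{m/a,n/a}$, $[x^{m/a}y^{n/a}]H=1$, and $\supp(F-H^a)\cap\mathcal{N}''=\emptyset$. To do this I would parameterize the lattice points of $\mathcal{N}''$ as $(m-r,n-s)$ with $(r,s)\in[0,m/a]\times[0,n/a]\cap\Z^2$ and induct on the total deficit $r+s$. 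Expanding
\[
[x^{m-r}y^{n-s}]H^a=F_{m-r,n-s}
\]
as a sum over tuples of deficits $(r_1,s_1),\dots,(r_a,s_a)$ with $\sum r_k=r$ and $\sum s_k=s$, the single tuple with one ``lower'' factor at $(r,s)$ and $a-1$ factors at the NE corner contributes the linear term $a\,H_{m/a-r,n/a-s}$, while every other tuple has at least two non-NE factors whose individual deficits satisfy $r_k+s_k<r+s$ and hence involve only $H$-coefficients determined at earlier steps of the induction. Solving the equations in order of $r+s$ therefore determines $H$ uniquely.

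For Stage~2, I would apply Lemma~\ref{unique_principal} from Appendix~A to $H$ to obtain a principal $W_F\in\mathcal{W}(H)$ (unique up to roots of unity) and $\beta\in\T$ with $H=\beta(W_F)$, and set $\delta=\deg\beta$. For any direction $(u,v)$ with $u,v>0$ the NE vertex $(m/a,n/a)$ is the $(u,v)$-leading term of $H$, and this leading form equals $W_F^\delta$ by the shape of elements of $\T$. Comparing leading coefficients gives $\bigl([x^{m/(a\delta)}y^{n/(a\delta)}]W_F\bigr)^\delta=1$, which forces $(m/(a\delta),n/(a\delta))\in\Z^2$ (equivalently $\delta\mid m/a$ and $\delta\mid n/a$) and, after normalizing the remaining $\delta$-th root of unity to~$1$, pins down $[x^{m/(a\delta)}y^{n/(a\delta)}]W_F=1$. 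Because $H=W_F^\delta+e_{\delta-2}W_F^{\delta-2}+\cdots+e_0$ has support inside $\Rect_{m/a,n/a}$, one deduces $\supp(W_F)\subseteq\Rect_{m/(a\delta),n/(a\delta)}$.

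Uniqueness of $(\beta,W_F)$ then follows by combining three inputs: uniqueness of $H$ from Stage~1; uniqueness of the principal representative of $\mathcal{W}(H)$ up to a root of unity; and the fact that the normalization $[x^{m/(a\delta)}y^{n/(a\delta)}]W_F=1$ kills that remaining ambiguity. I expect the hardest step of the proof to be the final geometric claim of Stage~2: showing that $N^0(W_F)$ really is the full rectangle $\Rect_{m/(a\delta),n/(a\delta)}$, i.e.\ that the transverse corners $(m/(a\delta),0)$ and $(0,n/(a\delta))$ actually appear in $\supp(W_F)$. This cannot be read off from $\beta\in\T$ and the matching condition alone; it must be propagated from $F\in R_{m,n}$ (which guarantees the corresponding corners of $F$ are nonzero) back through the construction of $H$ and then through the Tschirnhausen factorization, and is where the content of ``$W_F\in R_{m/(a\delta),n/(a\delta)}$'' rather than merely ``$W_F\in\overline{R}_{m/(a\delta),n/(a\delta)}$'' must be established.
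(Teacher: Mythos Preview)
Your two-stage approach is essentially the same as the paper's. The paper packages Stage~1 as Lemma~\ref{lem:c=p^2 most generalized} (constructing the ``pre-generator'' $Q$, your $H$), using a total order on the lattice points of $\mathcal{N}'$ induced by a linear functional $(r_1,r_2)$ with $r_2$ irrational; your induction on the total deficit $r+s$ is an equally valid way to organize the same coefficient-by-coefficient solve. Stage~2 is identical: apply Lemma~\ref{unique_principal} to get a principal $W_F$ with $Q=\beta(W_F)$, read off $\delta\mid m/a$ and $\delta\mid n/a$ from the leading form, and normalize away the root of unity via the NE coefficient.

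You are right to flag the claim $W_F\in R_{m/(a\delta),n/(a\delta)}$ (full rectangle) as the delicate point, and in fact the paper's proof does not address it either: it simply asserts $Q\in R_{m/a,n/a}$ after fixing only the NE coefficient, and then deduces $N^0(W_F)=\frac{1}{\delta a}N^0(F)$ from $N^0(Q)=\mathcal{N}'$. This full-rectangle claim is actually \emph{false} as stated. Take $(a,b,m,n)=(2,3,2,2)\in\mathcal{Q}$ and $F=x^2y^2+x^2+y^2\in R_{2,2}$; matching on $\mathcal{N}''=\{(1,1),(2,1),(1,2),(2,2)\}$ forces $Q=xy$, hence $W_F=xy$ with $\delta=1$, and $N^0(W_F)$ is the segment from $(0,0)$ to $(1,1)$, not $\Rect_{1,1}$. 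What both your argument and the paper's actually establish is the weaker statement $W_F\in\overline{R}_{m/(a\delta),n/(a\delta)}$, and this is all that is used downstream: the definition of $Z$ in~\eqref{eq:Z}, Corollary~\ref{main_cor}, and the proof of Theorem~\ref{main_thm} only need the NE vertex of $W_F^j$ to sit at $(jm/(a\delta),jn/(a\delta))$ with coefficient~$1$. So your instinct that this is ``the hardest step'' is vindicated in a sharp way: it is the one step that cannot be completed as written, and the fix is to weaken the theorem statement to $\overline{R}$ rather than to find a harder argument.
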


\begin{remark}
$\beta(z)^a$ is our first approximation to 
	the polynomial $\alpha_F(z)$ such that $\alpha_F(W_F)=F$.
\end{remark}


To prove Theorem~\ref{existence_of_Ec}, we need \Cref{lem:c=p^2 most generalized} below, whose proof is similar to \cite[Lemma 2.6]{HLLN}.  When we prove \Cref{existence_of_Ec} using 
\Cref{lem:c=p^2 most generalized}, $Q$ will play the role of $\beta(W_F)$.

\begin{lemma}
	\label{lem:c=p^2 most generalized}
	Let $(a,b,m,n)\in\mathcal{Q}$ and 
let $\ff\in R_{m,n}$.   Then there exists a unique polynomial 
	$Q\in R_{{m}/{a}, {n}/{a}}$
	such that 
	${\rm supp}(\ff-Q^a)\subseteq N^0(\ff)\setminus \mathcal{N}''$.

Moreover, there exists 
	a principal polynomial $W^\circ\in\mathcal{W}(Q)$ (unique up to root of unity),
	and a polynomial
	 $\beta(z)\in\mathbb{T}$ 
	of degree $\delta\in\mathbb{Z}_{>0}$ such that 
          $Q=\beta(W^\circ)$, and 
	$N^0(W^\circ)=\frac{1}{\delta a} N^0(\ff) \;(=\frac{1}{\delta a} \Rect_{m,n} )$. 
\end{lemma}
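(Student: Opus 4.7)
The plan is to prove the two assertions separately. For the first (existence and uniqueness of $Q$), I will write $Q = \sum q_{i,j}\, x^i y^j$ with $(i,j)$ ranging over $\Rect_{m/a, n/a} \cap \Z^2$ and $q_{m/a, n/a} = 1$, so that the condition $\supp(F - Q^a) \subseteq N^0(F) \setminus \mathcal{N}''$ becomes the system of coefficient equations $[x^I y^J]\, Q^a = [x^I y^J]\, F$, one for each lattice point $(I, J) \in \mathcal{N}''$. Parametrizing $(I,J) = ((a-1)m/a + I',\, (a-1)n/a + J')$ with $(I', J') \in \Rect_{m/a, n/a}$, the goal is to show that this system has a triangular shape and hence a unique solution.

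Expanding $Q^a$ multinomially, each contribution to $[x^I y^J]\, Q^a$ comes from an ordered $a$-tuple $\bigl((i_1,j_1),\ldots,(i_a,j_a)\bigr)$ in $\Rect_{m/a,n/a}$ summing to $(I, J)$. Call a factor \emph{leading} if $(i_k, j_k) = (m/a, n/a)$, and \emph{non-leading} otherwise; a case split on the number $|S|$ of non-leading factors then yields three regimes. When $|S| = 0$, the contribution is $q_{m/a,n/a}^a = 1$, and this configuration exists only for $(I', J') = (m/a, n/a)$. When $|S| = 1$, the single non-leading factor is forced to be $(I', J')$, contributing $a\cdot q_{I', J'}$. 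When $|S| \geq 2$, the non-negativity of all individual deficits $m/a - i_k$ and $n/a - j_k$ forces each non-leading index to satisfy $i_k \geq I'$ and $j_k \geq J'$; moreover no such $(i_k, j_k)$ can equal $(I', J')$, since that would force every other non-leading factor to be leading, contradicting $|S|\ge 2$. Thus the $(I', J')$ equation reads
\[
a\, q_{I', J'} \,+\, \Phi_{I', J'}\bigl(\{q_{i,j} : i \geq I',\ j \geq J',\ (i,j) \neq (I', J')\}\bigr) \;=\; [x^I y^J]\, F,
\]
for a polynomial $\Phi_{I', J'}$ in strictly higher-indexed coefficients. Because $a \neq 0$ in characteristic zero, reverse induction on $I' + J'$ (with base case $q_{m/a, n/a} = 1$) uniquely determines every $q_{I', J'}$, giving both existence and uniqueness of $Q$.

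For the second assertion, I invoke Lemma~\ref{unique_principal} (Appendix A), which produces a principal polynomial $W^\circ \in \mathcal{W}(Q)$, unique up to root-of-unity scaling, together with $\beta(z) \in \T$ of some degree $\delta \geq 1$ satisfying $Q = \beta(W^\circ)$. Writing $\beta(z) = z^\delta + e_{\delta - 2} z^{\delta - 2} + \cdots + e_0$ and using $k \cdot N^0(W^\circ) \subseteq \delta \cdot N^0(W^\circ)$ for $k < \delta$ (which holds because $(0,0) \in N^0(W^\circ)$), I obtain $N^0(Q) = \delta \cdot N^0(W^\circ)$: the northeast corner of $\delta \cdot N^0(W^\circ)$ is attained uniquely by the leading term $(W^\circ)^\delta$, so no cancellation can shrink the Newton polygon. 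Matching with $N^0(Q) = \Rect_{m/a, n/a}$ yields $N^0(W^\circ) = \Rect_{m/(\delta a), n/(\delta a)}$, which in particular forces $\delta \mid m/a$ and $\delta \mid n/a$; rescaling $W^\circ$ by a $\delta$-th root of unity normalizes its leading coefficient to $1$, giving $W^\circ \in R_{m/(\delta a), n/(\delta a)}$.

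The main obstacle I anticipate is the combinatorial bookkeeping in the $|S| \geq 2$ regime above: one must verify uniformly, across all values of $a$ and all positions $(I', J')$, that $(I', J')$ cannot reappear as some $(i_k, j_k)$ in a multi-non-leading configuration. The argument is short, but it is exactly this step that makes the otherwise tautological expansion of $Q^a$ produce a genuinely triangular system. A smaller concern is confirming that $\delta \cdot N^0(W^\circ)$ equals (rather than strictly contains) the rectangle $\Rect_{m/a, n/a}$, but this is immediate from $\beta \in \T$ making $(W^\circ)^\delta$ the uniquely dominant term at the northeastern vertex.
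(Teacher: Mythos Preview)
Your proof is correct and follows essentially the same strategy as the paper's. For the first part, both arguments show that the coefficient equations form a triangular system: the paper totally orders the lattice points of $\mathcal{N}'$ via a generic linear functional $(r_1,r_2)$ with $r_2$ irrational, while you use the coordinatewise partial order (with $I'+J'$ as a height function) and the deficit bound $\sum_k(m/a-i_k)=m/a-I'$ to confine the non-leading factors. Your version is slightly more elementary since it avoids the auxiliary irrational weight, but the triangularity mechanism is identical. For the second part, both arguments invoke the principal polynomial from Appendix~A and the dilation identity $N^0(\beta(W^\circ))=\delta\,N^0(W^\circ)$; your justification via the uniquely attained northeast vertex matches the paper's one-line appeal to $N^0(f^i)=i\,N^0(f)$. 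One small remark: to get the full equality $N^0(W^\circ)=\Rect_{m/(\delta a),\,n/(\delta a)}$ rather than just matching the NE vertex, you should also note that $\deg_x Q=\delta\deg_x W^\circ$ and $\deg_y Q=\delta\deg_y W^\circ$ (same no-cancellation argument applied to the $(1,0)$- and $(0,1)$-extremes), but this is immediate from what you have written.
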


\begin{proof}[Proof of Lemma~\ref{lem:c=p^2 most generalized}]
Fix two positive numbers $r_1\in\mathbb{Q}$ and $r_2\in \mathbb{R}\setminus\mathbb{Q}$. 
Note that $\{(x,y)\ | r_1 x+r_2 y=r_1 m+r_2 n\}\cap \mathbb{Z}^2=\{(m,n)\}$.  Then $N^0(F)$ lies in the half plane $r_1 x+r_2 y\le r_1 m+ r_2 n$. Let $n'$ be the number of lattice points in $\mathcal{N}'$. 
We use $(r_1,r_2)$, viewed as a linear functional, to put a total order 
$z_1> \dots > z_n$ on the lattice points in $\mathcal{N}'$: specifically, we will 
label the lattice points of $\mathcal{N}'$ by 
$\{z_i=(x_i,y_i)\}_{1\le i\le n'}$, where $r_1 x_1+r_2 y_1>r_1 x_2+r_2 y_2>\cdots>r_1 x_{n'}+r_2 y_{n'}$. Then $z_1=(m/a,n/a)$. For $i=1,\dots,n'$, denote ${\bf x}^{z_i}=x^{x_i}y^{y_i}$. 

Our goal is to construct a polynomial $Q=\sum_{i=1}^{n'} q_i {\bf x}^{z_i}$ with 
the desired properties.  We will construct the coefficients
 $q_1,..,q_{n'}$ inductively.  We set  $q_{1}=1$, so that 
	$Q\in R_{\frac{m}{a}, \frac{n}{a}}$.
 For the  inductive step, 
assume that $q_{1},\dots,q_{k-1}$ (for some $k>1$) 
have already been  determined. 

Note that the lattice points of $\mathcal{N}''$ 
are precisely the points 
$(a-1)z_1+z_i$ for $i=1\dots,n'$.
Since we want ${\rm supp}(\ff-Q^a)\subseteq N^0(\ff)\setminus \mathcal{N}''$,
we need to make sure that for each $k$, the coefficients
$[{\bf x}^{(a-1)z_1+z_k}]F$ and 
$[{\bf x}^{(a-1)z_1+z_k}](\sum_{i=1}^{n'} q_{i}{\bf x}^{z_i})^a$ are equal.
Note that because our ordering 
$z_1 > \dots > z_{n'}$ is induced by  a linear functional, the 
coefficient $[{\bf x}^{(a-1)z_1+z_k}](\sum_{i=1}^{n'} q_{i}{\bf x}^{z_i})^a$ is equal to 
the coefficient
$[{\bf x}^{(a-1)z_1+z_k}](\sum_{i=1}^k q_{i}{\bf x}^{z_i})^a$.
(Any expression for $(a-1)z_1+z_k$ as a linear combination of 
	$a$ of the $z_i$'s will necessarily use the variables $z_i$ for $i\leq k$.)

We now claim that the coefficient $q_k$ is uniquely
	determined by the condition that the coefficients
$[{\bf x}^{(a-1)z_1+z_k}]F$  and
	$[{\bf x}^{(a-1)z_1+z_k}](\sum_{i=1}^{k} q_{i}{\bf x}^{z_i})^a$ are equal.
	To see this, note that 
	the coefficient of ${\bf x}^{(a-1)z_1+z_k}$
	in $(\sum_{i=1}^{k} q_{i}{\bf x}^{z_i})^a$ equals 
$${a\choose 1}q_1^{a-1}q_{k}+\sum_{\stackrel{z_{i_1}+\cdots+z_{i_a}=(a-1)z_1+z_k}{i_1,\dots,i_a<k}}q_{i_1}\cdots q_{i_a},$$ so requiring that this coefficient equal a fixed number will
 uniquely determine  $q_{k}$. 

For the second statement of the 
lemma, let $W^\circ\in\mathcal{W}(Q)$ be a principal polynomial; 
it is unique up to a root of unity 
(by 
\Cref{unique_principal}).
	Then by \Cref{lem:unique}, there is a unique 
 $\beta(z)\in\mathbb{T}$, say of degree $\delta$,  
 such that $Q=\beta(W^\circ)=(W^\circ)^\delta + e_{\delta-2} (W^\circ)^{\delta-2} + 
 \cdots + e_0$.

Note that for any $f\in \RRR$, the Newton polygon $N^0(f^i)$ is just the $i$th dilation 
$iN^0(f)$ of the Newton polygon $N^0(f)$. Therefore 
$N^0(\beta(W^0)) = \delta N^0(W^\circ)$.
  Therefore  we conclude that $N^0(Q)=\delta N^0(W^\circ)$, hence 
	$N^0(W^\circ)=\frac{1}{\delta a} N^0(\ff)$.
\end{proof}


\begin{proof}[Proof of Theorem~\ref{existence_of_Ec}]
By \Cref{lem:c=p^2 most generalized}, there exists a unique polynomial $Q$ such that 
$N^0(Q - x^{m/a} y^{n/a})\subsetneq N^0(Q)=\mathcal{N}'$ 
and ${\rm supp}(\ff-Q^a)\subseteq N^0(\ff)\setminus \mathcal{N}''$.
We now want to use 
	\Cref{lem:c=p^2 most generalized}
	to produce 
	a principal polynomial $W^{\circ}\in\mathcal{W}(Q)$ and 
$\beta(z)\in\mathbb{T}$ (of degree $\delta$) such that $Q=\beta(W^\circ)$.
Since $[x^{m/a} y^{n/a}]Q=1$, the coefficient $[x^{m/a\delta}y^{n/a\delta}]W^{\circ}$ must be a root of unity.
Choose $W_F$ to be the unique polynomial among the principal polynomials in $\mathcal{W}(Q)$ 
such that $[x^{m/a\delta}y^{n/a\delta}]W_F=1$; this determines $\beta$ such that 
	$Q=\beta(W_F)$.
By  
	\Cref{lem:c=p^2 most generalized},
	we have 
$N^0(W_F)= 
\frac{1}{\delta a} \Rect_{m,n}$. 

	Since $Q=\beta(W_F)$, we have ${\rm supp}(\ff-\beta(W_F)^a)\subseteq N^0(\ff)\setminus \mathcal{N}''$, which 
completes our proof of the theorem.
\end{proof}

\begin{definition}\label{df:pre-generator}
The polynomial $Q$ constructed in Lemma~\ref{lem:c=p^2 most generalized} is called the \emph{pre-generator} for the pair $(F,a)$. The polynomial $\zeta(F):=F-Q^a$ is called the \emph{inner polynomial} of the pair $(F,a)$. When $a$ is clear from context, it will be called  the inner polynomial of $F$.  
It follows from Lemma \ref{lem:c=p^2 most generalized} that 
	${\rm supp}\ \zeta(F)\subseteq N^0(\ff)\setminus \mathcal{N}''$.
\end{definition}

\section{Conjecture~\ref{Jac_conj3}, Conjecture~\ref{Jac_conj4}, and the main theorem}

In this section, we introduce two new conjectures (Conjectures~\ref{Jac_conj3} and~\ref{Jac_conj4}) which imply Conjecture~\ref{Jac_conj2} and hence the Jacobian conjecture. Then we state our main theorem.

\begin{conjecturealpha}\label{Jac_conj3}
Let $(a, b, m, n) \in \mathcal{Q}$. 
Suppose that $\ff,\GG\in \mathbb{C}[x,y]$ satisfy the following:

\noindent\emph{(1)} $[\ff,\GG]  \in  \mathbb{C}$;

\noindent\emph{(2)} $\ff\in R_{m,n}$ and $\GG\in R_{bm/a,bn/a}$.

Let $\EEE$ be the F-generator constructed in 
\Cref{existence_of_Ec}.
Then  there exists $\g\in \mathbb{T}$ with $\deg \g \geq 2$ such that 
	$\ff = \g(\EEE)$.
 \end{conjecturealpha}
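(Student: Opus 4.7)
The plan is to argue by contradiction against $F\ne\g(W_F)$ for every $\g\in\mathbb{T}$. Given $F,G$ satisfying (1) and (2), and the $F$-generator $W_F$ with $\beta\in\mathbb{T}$ of degree $\delta$ from \Cref{existence_of_Ec}, introduce
\[
\mathbb{T}_F=\{\g\in\mathbb{T}:\deg\g=a\delta,\ \supp(F-\g(W_F))\subseteq N^0(F)\setminus\mathcal{N}''\},
\]
which is nonempty because it contains $\beta^a$. Among its elements I would choose $\g^\circ$ so that $N^0(F-\g^\circ(W_F))$ is minimal under inclusion; such a choice exists because only finitely many lattice polygons are contained in $N^0(F)$. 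Set $Z:=F-\g^\circ(W_F)$. The conjecture is then equivalent to showing $Z=0$, since in that case $F=\g^\circ(W_F)$ with $\deg\g^\circ=a\delta\ge a\ge 2$.

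Assume for contradiction that $Z\ne 0$, and let $(p,q)$ denote the northeastern vertex of $N^0(Z)$. By the minimality of $N^0(Z)$, the point $(p,q)$ cannot be removed by adjusting $\g^\circ$ by a multiple of $z^k$ for any $0\le k<a\delta$; it therefore plays the role of an inner vertex of $F$ in the sense of the paper, and \Cref{main_thm}/\Cref{main_cor} then constrain its location to the specific region described there.

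Next I would expand the Jacobian identity via the chain rule for Poisson brackets:
\[
[F,G]=\g^{\circ\prime}(W_F)\,[W_F,G]+[Z,G].
\]
Choose a direction $w=(u,v)$ outer-normal to an edge of $N^0(Z)$ through $(p,q)$ and pass to $w$-leading forms. The generalized Magnus formula (\Cref{Magnus_thm}) says $G\sim\sum_{\gamma\ge 0}c_\gamma F^{(e-\gamma)/d}$, so its $w$-leading form factors through $F_+=\g^\circ(W_F)_+$, making $[W_F,G]_+$ computable purely in terms of $W_{F,+}$. The goal is then to show that, under the geometric constraint on $(p,q)$ furnished by \Cref{main_thm}, the $w$-leading contribution of $[Z,G]$ involves the monomial $x^p y^q$ in a way that cannot cancel against the corresponding contribution of $\g^{\circ\prime}(W_F)[W_F,G]$. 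This would produce a nonzero $w$-leading form of $[F,G]$, contradicting $[F,G]\in\mathbb{C}$.

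The main obstacle is precisely this noncancellation step. Establishing it requires enough control over $[W_F,G]$ via the Magnus expansion to pin down the coefficient of a specific monomial in $[F,G]$ obstructed by the location of $(p,q)$. In spirit this is where the inner vertex conjecture (\Cref{weak_Jac_conj}) is expected to enter: it provides precisely the rigid structural information needed to rule out such cancellations. Consequently, a full proof of Conjecture~\ref{Jac_conj3} seems to demand significant refinements beyond the current form of the main theorem, and it is plausible that one first needs to establish (\Cref{weak_Jac_conj}) in general before this strategy can succeed.
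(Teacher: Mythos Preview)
The statement you are attempting to prove is labelled a \emph{conjecture} in the paper, and the paper does not prove it. Conjecture~\ref{Jac_conj3} is presented as an open problem logically sandwiched between Conjectures~\ref{Jac_conj2} and~\ref{Jac_conj4}: the paper shows only that Conjecture~\ref{Jac_conj4} implies Conjecture~\ref{Jac_conj3} (Lemma~\ref{DtoC}) and that Conjecture~\ref{Jac_conj3} implies Conjecture~\ref{Jac_conj2} (Lemma~\ref{CtoB}). A genuine proof of Conjecture~\ref{Jac_conj3} would yield the two-dimensional Jacobian conjecture, which is of course open.

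You acknowledge this yourself in the last paragraph, where you concede that the noncancellation step ``seems to demand significant refinements beyond the current form of the main theorem.'' That diagnosis is accurate, and it is the genuine gap. Two specific issues: first, your remainder $Z=F-\g^\circ(W_F)$ is not the innermost polynomial of the paper (that $Z$ is defined by killing the diagonal monomials $x^{jm/a\delta}y^{jn/a\delta}$, not by minimizing the Newton polygon under inclusion), so Theorem~\ref{main_thm} does not apply to it as stated; one would need an additional argument along the lines of Corollary~\ref{main_cor} to transfer the constraint. Second, and more seriously, even granting that the NE vertex $(p,q)$ lies in the region~$\mathfrak{R}$, the claim that the $w$-leading contribution of $[Z,G]$ cannot cancel against that of $\g^{\circ\prime}(W_F)[W_F,G]$ is precisely the heart of the matter. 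The proof of Theorem~\ref{main_thm} shows how subtle such leading-form cancellation arguments are---and that argument works only because the vertex is assumed \emph{outside}~$\mathfrak{R}$; inside~$\mathfrak{R}$ the degree count fails (see the remark following the proof of Proposition~\ref{no12v2}). So your outline is a reasonable description of what a proof \emph{might} look like, but it is not a proof, and the paper offers none.
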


\begin{remark}\label{rem:GGV}
One may compare Conjecture~\ref{Jac_conj3} with a conjecture 
	from \cite{GGV14}, which has a similar flavor of expressing $F$
	in terms of a one-variable polynomial.
\end{remark}

For integers $r_1\leq r_2$, we use the notation
$[r_1,r_2]:=\{x\in \mathbb{Z} \ : \ r_1\le x\le r_2\}$; 
this should not be confused with the Jacobian $[f,g]$.

The following definition provides a way to construct the 
polynomial $\alpha$ as in \Cref{Jac_conj3}. 

\begin{definition}
Let $a,m,n\in \mathbb{Z}_{>1}$ such that $a|m$, $a|n$, 
and $\ff\in R_{m,n}$.
Let $\EEE$ be the $\ff$-generator constructed in 
\Cref{existence_of_Ec}, and 
 $Q$ be the pre-generator (see Definition \ref{df:pre-generator})
	so that 
	$N^0(\EEE) \subseteq \Rect_{m/{a\delta}, n/{a\delta}}$,
	and ${\rm supp}(\ff-Q^a)\subseteq \Rect_{m,n} \setminus \mathcal{N}''$.

Let $e_0,e_1, e_2, ...,e_{(a-1)\delta-1}\in\mathbb{C}$ 
	be uniquely determined by the condition that the polynomial
	\begin{equation}\label{eq:Z}
		Z:=F-Q^a-\sum_{j=0}^{(a-1)\delta-1} e_{j}\EEE^{j}
	\end{equation} does not contain in its support any
	term of the form $x^{jm/(a\delta)}y^{jn/(a\delta)}$ for $j\in[0,(a-1)\delta-1]$.
	
	This polynomial $Z$ is called \emph{the innermost polynomial} of $F$. 
\end{definition}

\begin{conjecturealpha}\label{Jac_conj4}
Let $(a, b, m, n) \in \mathcal{Q}$. 
Suppose that $\ff,\GG\in \mathbb{C}[x,y]$ satisfy the following:

\noindent\emph{(1)} $[\ff,\GG]  \in  \mathbb{C}$;

\noindent\emph{(2)} $\ff\in R_{m,n}$ and $\GG\in R_{bm/a,bn/a}$.

Then $Z=0$. 
 \end{conjecturealpha}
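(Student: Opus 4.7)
The plan is to argue by contradiction: assume $Z \ne 0$. Writing $\alpha(z) = \beta(z)^a + \sum_{j=0}^{(a-1)\delta - 1} e_j z^j \in \T$, we have $F = \alpha(W_F) + Z$ with $\deg \alpha = a\delta \ge 2$, and the Leibniz rule yields
\[
[F, G] \;=\; \alpha'(W_F)\,[W_F, G] \;+\; [Z, G].
\]
Since the left-hand side is a constant by hypothesis, the right-hand side must also be a constant. Note that \Cref{Jac_conj4} is in fact equivalent to \Cref{Jac_conj2}: the conclusion $Z = 0$ forces $F = \alpha(W_F)$ with $\deg\alpha\ge2$, and \Cref{FG00} then gives $[F,G]=0$. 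Thus any complete proof must in effect establish the plane Jacobian conjecture; the goal here is to outline the natural attack made available by the paper's setup.

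First, I would invoke Theorem~\ref{main_thm} and Corollary~\ref{main_cor} to pin the northeastern vertex $P_Z$ of $N^0(Z)$ inside the specific region they provide. By the very definition of the innermost polynomial, $Z$ contains no monomial of the diagonal form $x^{jm/(a\delta)} y^{jn/(a\delta)}$ for $j \in [0, (a-1)\delta - 1]$, so $P_Z$ sits strictly off the segment from the origin to $(m,n)$ in the relevant range; this is the crucial geometric input.

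Next, I would choose a direction $(u,v)\in\mathcal{D}$ with $u,v>0$ so that $P_Z$ is the unique $(u,v)$-maximum of $\supp(Z)$, and compare the $(u,v)$-leading forms on both sides of the displayed identity. By the generalized Magnus' formula (Theorem~\ref{Magnus_thm}), the $(u,v)$-leading form $G_+$ is, up to a scalar, a fractional power of $F_+$, and $W_F$ is built so that $W_{F,+}$ is (up to a root of unity) an $(a\delta)$-th root of $F_+$ in the same sense. These compatibilities let me rewrite $[Z_+, G_+]$ and $\alpha'(W_{F,+})[W_{F,+},G_+]$ as expressions in $W_{F,+}$ and $Z_+$ alone, where cancellation can be examined monomial by monomial.

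The main obstacle is the comparison step. One must show that the $(u,v)$-top monomial of $[Z,G]$, situated at roughly $P_Z + (bm/a - 1, bn/a - 1)$, cannot be matched by any monomial in $\alpha'(W_F)[W_F, G]$, whose highest contribution lies around $(m + bm/a - 1,\, n + bn/a - 1)$ shifted down along the diagonal by a definite amount determined by the support of $Z$. Theorem~\ref{main_thm} supplies the needed geometric separation in a range of directions, but the boundary cases (where $P_Z$ lies near the edge of the permitted region) are precisely those handled by the partial results of the paper: the special case of Conjecture~\ref{main_conj5} proved in \S8 and the inner-vertex Conjecture~\ref{weak_Jac_conj} for $e/d > (n-m)/a - 1$. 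Bridging the remaining boundary cases for arbitrary $e/d$ is the hard part; absent a new geometric input, it appears to require sharpening Theorem~\ref{main_thm} so as to shrink the admissible region for $P_Z$ and force non-cancellation in every direction simultaneously.
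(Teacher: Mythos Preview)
The statement you are asked to prove is \Cref{Jac_conj4}, which in the paper is a \emph{conjecture}, not a theorem: the paper gives no proof of it. The paper shows only that \Cref{Jac_conj4} implies \Cref{Jac_conj3} (\Cref{DtoC}) and hence the Jacobian conjecture, and then proves the partial result Theorem~\ref{main_thm} constraining the NE vertex of $N^0(Z)$. So there is no ``paper's own proof'' to compare against; you have correctly recognized this when you observe that a complete argument would amount to settling the plane Jacobian conjecture.

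Your outline is therefore not a proof but a strategy sketch, and you are candid about this. A few remarks on the sketch itself. Your claim that \Cref{Jac_conj4} is \emph{equivalent} to \Cref{Jac_conj2} is not quite right: $Z$ is defined from $F$ alone, so even if \Cref{Jac_conj2} holds (forcing $[F,G]=0$ under the hypotheses), nothing prevents the existence of some $F\in R_{m,n}$ with $Z\neq 0$ together with a $G\in R_{bm/a,bn/a}$ satisfying $[F,G]=0$. What is true is that \Cref{Jac_conj4} implies \Cref{Jac_conj2}, and both imply the Jacobian conjecture. Second, the non-cancellation argument you propose---pick a direction isolating $P_Z$ and compare leading forms in $[F,G]=\alpha'(W_F)[W_F,G]+[Z,G]$---is close in spirit to the method the paper uses to prove Theorem~\ref{main_thm} (iterated decreasing automorphisms and comparison of $w$-leading forms), but the paper's own analysis shows exactly why this stalls: once $P_Z$ lands in the narrow region $\mathfrak{R}$ of Theorem~\ref{main_thm}, the degree count $\deg_w([F,G])$ can drop to $0$ and the contradiction evaporates (see the remark and Figure~\ref{big_pic} following Proposition~\ref{no12v2}). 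The paper's response is not to push this direct argument further but to pivot to \Cref{main_conj5} and the extended Magnus formula machinery of \S8; your proposed sharpening of Theorem~\ref{main_thm} is a different (and, as you note, presently unavailable) route.
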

We will show that \Cref{Jac_conj4} implies Conjecture~\ref{Jac_conj3} (see Lemma~\ref{DtoC}).

See \Cref{fig:ConjD} for a depiction of the lattice points relevant to 
\eqref{eq:Z}.

\begin{figure}[h]
\begin{center}
\begin{tikzpicture}[scale=0.50]
\usetikzlibrary{patterns}

\begin{scope}[shift={(10,0)}]
\usetikzlibrary{patterns}
\draw (0,8)--(4,8)--(4,0);
\draw (3,6)--(3,8)--(4,8)--(4,6)--(3,6);
\fill[blue!10] (3,6)--(3,8)--(4,8)--(4,6)--(3,6);
\draw (3,7) node[anchor=west] {\tiny $\mathcal{N}''$};
\draw (4,8) node {\huge .};
\draw (2,4) node {\tiny $\bullet$};
\draw (1,2) node {\tiny $\bullet$};
\draw (0,0) node {\tiny $\bullet$};
\draw (2.5,5) node {\tiny $\bullet$};
\draw (1.5,3) node {\tiny $\bullet$};
\draw (0.5,1) node {\tiny $\bullet$};
	\draw (4, 8) node[anchor=south west] {\small $(m,n)$};
\draw[->] (0,0) -- (5,0)
node[above] {\tiny $x$};
\draw[->] (0,0) -- (0,9)
node[left] {\tiny $y$};
\end{scope}
\end{tikzpicture}
\end{center}
	\caption{$N^0(F)=\Rect_{m,n}$ and $\mathcal{N}''$, shown for the case $m=8$,
	$n=16$, and $a=4$.  The lattice points in bold are those of the form
	 $(\frac{jm}{a\delta}, \frac{jn}{a\delta})$ for $j\in[0,(a-1)\delta-1]$, where 
	 $\delta=2$.}
\label{fig:ConjD}
\end{figure}
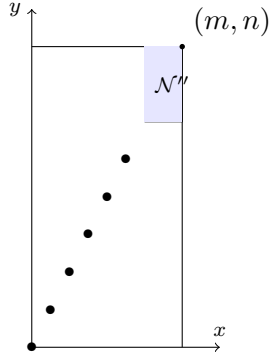

\begin{lemma}
The coefficients $e_j$ 
from \Cref{Jac_conj4}
	are well-defined.
\end{lemma}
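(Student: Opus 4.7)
The plan is to translate the defining conditions on the $e_j$ into a linear system and observe that, after a natural ordering, the coefficient matrix is upper triangular with $1$'s on the diagonal, hence invertible. For $j \in [0,(a-1)\delta-1]$ write $p_j := \bigl(jm/(a\delta),\,jn/(a\delta)\bigr)\in\mathbb{Z}_{\ge 0}^2$, and set $A_{jk} := [x^{jm/(a\delta)}y^{jn/(a\delta)}]\,W_F^k$ and $c_j := [x^{jm/(a\delta)}y^{jn/(a\delta)}](F-Q^a)$. The requirement that $\supp(Z)$ contain no monomial $x^{jm/(a\delta)}y^{jn/(a\delta)}$ translates exactly into the linear system
\[
\sum_{k=0}^{(a-1)\delta-1} A_{jk}\,e_k \;=\; c_j, \qquad j=0,1,\dots,(a-1)\delta-1.
\]

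The key input is \Cref{existence_of_Ec}, which places $W_F$ in $R_{m/(a\delta),\,n/(a\delta)}$; in particular $N^0(W_F) = \Rect_{m/(a\delta),\,n/(a\delta)}$ and the coefficient of $x^{m/(a\delta)}y^{n/(a\delta)}$ in $W_F$ equals $1$. Consequently $\supp(W_F^k) \subseteq N^0(W_F^k) = \Rect_{km/(a\delta),\,kn/(a\delta)}$, and the monomial $x^{km/(a\delta)}y^{kn/(a\delta)}$ has coefficient $1$ in $W_F^k$. When $k < j$, both coordinates of $p_j$ strictly exceed those of the northeast corner of $\Rect_{km/(a\delta),\,kn/(a\delta)}$, so $p_j \notin \supp(W_F^k)$ and hence $A_{jk} = 0$; meanwhile $A_{jj} = 1$.

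The matrix $(A_{jk})$ is therefore upper triangular with unit diagonal, in particular invertible over $\mathbb{C}$. Solving by back-substitution from $j = (a-1)\delta - 1$ downward yields a unique tuple $(e_0,\dots,e_{(a-1)\delta-1})$ satisfying the defining conditions, proving the $e_j$ are well-defined. There is no real obstacle in this argument; everything hinges on the explicit description of $N^0(W_F^k)$ together with the normalization of its leading coefficient, both of which are built into the construction of $W_F$ in \Cref{existence_of_Ec}.
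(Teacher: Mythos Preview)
Your proof is correct and follows essentially the same approach as the paper: both arguments use that $W_F\in R_{m/(a\delta),\,n/(a\delta)}$ implies $W_F^k\in R_{km/(a\delta),\,kn/(a\delta)}$, so the monomial at $p_j$ cannot appear in $W_F^k$ for $k<j$ while it has coefficient~$1$ in $W_F^j$, and then solve for $e_{(a-1)\delta-1},\dots,e_0$ in that order. You have simply made the underlying triangular linear system explicit where the paper says ``inductively construct.''
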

\begin{proof}
\Cref{Jac_conj4} concerns the 
lattice points of the form 
	$(j \cdot \frac{m}{a\delta}, j \cdot \frac{n}{a\delta})$, for $j\in \Z_{\geq 0}$,
in $\Rect_{m,n}$.  Since 
 ${\rm supp}(\ff-Q^a)\subseteq \Rect_{m,n} \setminus \mathcal{N}''$, 
the only lattice points of this form which may have nonempty
support in $\ff-Q^a$ are those of the form 
$(j \cdot \frac{m}{a\delta}, j \cdot \frac{n}{a\delta})$
where $0 \leq j \leq (a-1)\delta -1$.

Since 
$W_F$ lies in $R_{m/(a\delta), n/(a\delta)}$,
and hence $(W_F)^j$ lies in $R_{jm/(a\delta), nj/(a\delta)}$,
we can inductively construct 
$e_{(a-1)\delta-1}, \dots, e_2, e_1, e_0 \in\mathbb{C}$ so that 
	\eqref{eq:Z} holds.
		\end{proof}        

Next we show that Conjecture~\ref{Jac_conj4} implies Conjecture~\ref{Jac_conj3}, and Conjecture~\ref{Jac_conj3} implies Conjecture~\ref{Jac_conj2}, hence the Jacobian conjecture.

\begin{lemma}\label{CtoB}
Conjecture~\ref{Jac_conj3} implies Conjecture~\ref{Jac_conj2}.
\end{lemma}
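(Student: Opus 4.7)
The plan is to chain together Conjecture~\ref{Jac_conj3} with Lemma~\ref{FG00}. Assume Conjecture~\ref{Jac_conj3} holds, and let $F,G \in \mathbb{C}[x,y]$ satisfy conditions (1) and (2) of Conjecture~\ref{Jac_conj2}. By Theorem~\ref{existence_of_Ec}, the $F$-generator $W_F$ exists and lies in $R_{m/(a\delta), n/(a\delta)}$ for some positive integer $\delta$ dividing $m/a$ and $n/a$. Applying Conjecture~\ref{Jac_conj3}, we obtain a polynomial $\alpha \in \mathbb{T}$ with $\deg(\alpha) \ge 2$ such that $F = \alpha(W_F)$.

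Next I would observe that $W_F \notin \mathbb{C}$: indeed, $W_F \in R_{m/(a\delta), n/(a\delta)}$ means that $N^0(W_F) = \Rect_{m/(a\delta), n/(a\delta)}$ is a nondegenerate rectangle in $\mathbb{R}_{\ge 0}^2$ (with $m/(a\delta), n/(a\delta) \ge 1$ because $a\delta \mid m$ and $a\delta \mid n$ with $m, n > 0$). In particular $W_F$ is a nonconstant polynomial that can be written as $\alpha(W_F) = F$ with $\deg(\alpha) \ge 2$, so $F$ fails to be principal in the sense of the paragraph preceding Lemma~\ref{FG00}.

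With these facts in hand, I would directly invoke Lemma~\ref{FG00} with $W^\circ := W_F$ and the given $\alpha$: since $[F,G] \in \mathbb{C}$ by hypothesis, $W^\circ \in \mathcal{R} \setminus \mathbb{C}$, and $\deg(\alpha) \ge 2$, the conclusion $[F, G] = 0$ follows. This is precisely the conclusion of Conjecture~\ref{Jac_conj2}, completing the implication.

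There is essentially no technical obstacle here; the argument is a short deduction whose content is entirely packaged inside Conjecture~\ref{Jac_conj3} (producing the nontrivial expression $F=\alpha(W_F)$) and Lemma~\ref{FG00} (extracting the vanishing of the Jacobian from any such nontrivial expression). The only thing to be a bit careful about is verifying that $W_F$ is nonconstant, which is immediate from the description of $N^0(W_F)$ provided by Theorem~\ref{existence_of_Ec}.
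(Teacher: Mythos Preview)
Your proof is correct and follows essentially the same approach as the paper: invoke Conjecture~\ref{Jac_conj3} to write $F=\alpha(W_F)$ with $\deg\alpha\ge2$, then apply Lemma~\ref{FG00} to conclude $[F,G]=0$. Your extra verification that $W_F\notin\mathbb{C}$ is harmless but unnecessary, since Lemma~\ref{FG00} already covers the case $W^\circ\in\mathbb{C}$ in its first line.
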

\begin{proof}
Assuming Conjecture~\ref{Jac_conj3} is true, it suffices to prove the following: if there is $\g\in \mathbb{T}$ such that $\ff - \g(\EEE)=0$, then $[F,G]=0$. But this is proved in Lemma~\ref{FG00}.
\end{proof}

\begin{lemma}\label{DtoC}
Conjecture~\ref{Jac_conj4} implies Conjecture~\ref{Jac_conj3}.
\end{lemma}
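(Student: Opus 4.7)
The plan is to assume Conjecture~\ref{Jac_conj4} holds and build the polynomial $\alpha$ required by Conjecture~\ref{Jac_conj3} directly from the ingredients appearing in the definition of $Z$. So I would start by taking $(F,G)$ satisfying conditions (1)--(2) of Conjecture~\ref{Jac_conj3}, forming the pre-generator $Q$, the $F$-generator $W_F$, and the polynomial $\beta(z)\in\mathbb{T}$ of degree $\delta$ with $Q=\beta(W_F)$ given by \Cref{existence_of_Ec} and \Cref{lem:c=p^2 most generalized}, together with the constants $e_0,\dots,e_{(a-1)\delta-1}$ and the innermost polynomial $Z$ from \eqref{eq:Z}.

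By the hypothesis that Conjecture~\ref{Jac_conj4} holds, we have $Z=0$, i.e.
\begin{equation*}
F \;=\; Q^a \;+\; \sum_{j=0}^{(a-1)\delta-1} e_j\, W_F^{\,j} \;=\; \beta(W_F)^a \;+\; \sum_{j=0}^{(a-1)\delta-1} e_j\, W_F^{\,j}.
\end{equation*}
This suggests defining
\begin{equation*}
\alpha(z) \;:=\; \beta(z)^a \;+\; \sum_{j=0}^{(a-1)\delta-1} e_j\, z^{j} \;\in\; \mathbb{C}[z],
\end{equation*}
so that automatically $F=\alpha(W_F)$. It then remains only to verify the two conditions required by Conjecture~\ref{Jac_conj3}, namely that $\alpha\in\mathbb{T}$ and that $\deg\alpha\ge 2$.

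For the degree, since $\beta\in\mathbb{T}$ has degree $\delta\ge 1$ and the correction sum has degree at most $(a-1)\delta-1 < a\delta$, we get $\deg\alpha=a\delta\ge a\ge 2$. For membership in $\mathbb{T}$, I would note that $\beta\in\mathbb{T}$ means $\beta(z)=z^\delta+e_{\delta-2}z^{\delta-2}+\cdots+e_0$ with vanishing subleading term; expanding $\beta(z)^a$ gives leading coefficient $1$ and a vanishing $z^{a\delta-1}$ coefficient (any monomial of degree $a\delta-1$ in the expansion must use $a-1$ copies of $z^\delta$ together with one copy of $z^{\delta-1}$, whose coefficient in $\beta$ is zero). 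Because the added terms $\sum e_j z^j$ all have degree at most $(a-1)\delta-1<a\delta-1$, they affect neither the leading coefficient nor the $z^{a\delta-1}$ coefficient of $\alpha$. Hence $\alpha\in\mathbb{T}$.

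There is no real obstacle here: once $Z=0$ is granted, the construction of $\alpha$ is mechanical and the Tschirnhausen conditions fall out of the shape of $\beta$ and of the range $0\le j\le (a-1)\delta-1$ of the correction indices. The only point that deserves a moment of care is precisely the numerical inequality $(a-1)\delta-1<a\delta-1$, which is what guarantees that the truncation chosen in the definition of the innermost polynomial is tight enough to preserve the subleading-zero condition of $\mathbb{T}$.
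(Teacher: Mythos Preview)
Your proof is correct and follows essentially the same construction as the paper: define $\alpha(z)=\beta(z)^a+\sum_{j=0}^{(a-1)\delta-1} e_j z^j$ and use $Z=0$ to conclude $F=\alpha(W_F)$. You in fact go a bit further than the paper by explicitly verifying that $\alpha\in\mathbb{T}$ and $\deg\alpha=a\delta\ge 2$, which the paper leaves implicit.
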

\begin{proof}
Suppose that the hypotheses of \Cref{Jac_conj3} hold, and   
suppose we know that \Cref{Jac_conj4} holds.  
Let $\EEE$ be the $F$-generator constructed in 
\Cref{existence_of_Ec}.  

	Let $\alpha_F(z):=\beta(z)^a +
		\sum_{j=0}^{(a-1)\delta-1} e_{j} z^{j},$
		where the coefficients $e_j$ are those 
		constructed in \Cref{Jac_conj4}.
		We know that $Q = \beta(W_F)$ (by the 
		proof of 
	\Cref{existence_of_Ec}), so 
	$\alpha_F(W_F) 
	= (\beta(W_F))^a 
+	\sum_{j=0}^{(a-1)\delta-1} e_{j} (W_F)^{j} = 
	 Q^a  + 
		\sum_{j=0}^{(a-1)\delta-1} e_{j} (W_F)^{j}.$  By 
	\Cref{Jac_conj4}, this quantity equals $F$.
	Therefore $F = \alpha_F(W_F)$, which is the 
	conclusion of \Cref{Jac_conj3}.
\end{proof}

\begin{definition}

\noindent{(1)} Any convex polygon $P$ in $\mathbb{R}^2$ has one or two rightmost vertices.   Of those, the vertex whose $y$-coordinate is higher is called the \emph{most East-North vertex} (or \emph{EN vertex} for short) of $P$.

\noindent{(2)} Any convex polygon $P$ in $\mathbb{R}^2$ has one or two uppermost vertices.   Of those, the vertex whose $x$-coordinate is larger is called the \emph{most North-East vertex} (or \emph{NE vertex} for short) of $P$.

\end{definition}

A very important step towards proving Conjecture~\ref{Jac_conj4} is to prove the following conjecture(s):
\begin{conjecture}\label{weak_Jac_conj}
(1)  The Newton polygon of the innermost polynomial $Z$ has the following property: its NE vertex $(m',n')$, which coincides with its EN vertex, satisfies $n'/m'=n/m$ and $N^0(Z)\subseteq \text{Rect}_{m',n'}$. 

\noindent (2) The Newton polygon of the inner polynomial $\zeta(F)$ has the same property. 

 \end{conjecture}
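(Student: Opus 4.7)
The plan is to combine the generalized Magnus formula (Theorem~\ref{Magnus_thm}) with the principle that for a Jacobian pair $(F,G)$ and any direction $w\in\mathcal{D}$, the $w$-leading forms $F_+$ and $G_+$ satisfy $[F_+,G_+]=0$ (since the bracket would otherwise have positive $w$-degree while $[F,G]\in\mathbb{C}$). First I would reduce (2) to (1): since $\zeta(F)=Z+\sum_{j=0}^{(a-1)\delta-1}e_j W_F^j$ and each $W_F^j\in R_{jm/(a\delta),\,jn/(a\delta)}$ has its NE corner on the diagonal through $(0,0)$ and $(m,n)$, any NE vertex of $N^0(\zeta(F))$ lying off that diagonal would have to come from $Z$; hence (1) delivers (2) essentially for free, modulo the analogous containment statement.

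For (1), I would argue by contradiction. Suppose the NE vertex $(m',n')$ of $N^0(Z)$ satisfies $n'/m'>n/m$. Choose a family of directions $w_\lambda=(1,\lambda)$ with $\lambda$ slightly larger than $n/m$; for such $\lambda$, the $w_\lambda$-leading forms of $F$, $Q^a$, and each $W_F^j$ are supported on their respective top edges, whereas $w_\lambda\text{-deg}(x^{m'}y^{n'})$ exceeds $w_\lambda\text{-deg}(x^m y^n)$ by a positive amount scaling with $\lambda-n/m$. Apply the Magnus formula in direction $w_\lambda$ to write $G=\sum_\gamma c_\gamma F^{(e-\gamma)/d}+\text{error}$ and substitute into $[F,G]$; the $w_\lambda$-leading part must vanish. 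Isolating the contribution of the monomial $x^{m'}y^{n'}$ in this cancellation should yield an identity that forces its coefficient in $Z$ to be $0$, a contradiction. The symmetric case $n'/m'<n/m$, which would push the EN vertex rather than the NE vertex off the diagonal, is handled by swapping the roles of the two coordinate directions.

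The main obstacle is achieving error control in the Magnus formula fine enough to isolate the contribution of a single monomial of $Z$ to the leading form of $[F,G]$; the raw Magnus estimate is typically too coarse for this. The special case $e/d>(n-m)/a-1$ treated later in this paper appears to be precisely the range where the cancellation can be made explicit by elementary means, and the extended Magnus formula of Appendix~B is the natural tool to attempt the general conjecture. A subsidiary difficulty is the containment $N^0(Z)\subseteq \Rect_{m',n'}$: pinning down the NE vertex does not by itself rule out support of $Z$ lying strictly northeast of $(m',n')$ inside $\Rect_{m,n}\setminus \mathcal{N}''$, but this should follow from the same leading-form analysis carried out along auxiliary directions approaching $(1,0)$ and $(0,1)$.
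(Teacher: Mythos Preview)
The statement you are attempting to prove is Conjecture~\ref{weak_Jac_conj}; it is presented in the paper as an open conjecture, not a theorem. The paper establishes only a partial result in this direction: Theorem~\ref{main_thm} shows that the NE vertex of $N^0(Z)$ (which coincides with its EN vertex) lies in the narrow wedge $\mathfrak{R}$ bounded above by the diagonal $y=\tfrac{n}{m}x$ and below by the line through $(m,n)$ and $C=(\tfrac{a}{a+b},\tfrac{a}{a+b})$, not that it lies exactly on the diagonal. The special case $e/d>(n-m)/a-1$ is precisely the regime in which $\mathfrak{R}$ contains no lattice points off the diagonal, so the conjecture follows there for free.

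Your reduction of (2) to (1) is essentially correct and matches the paper's Corollary~\ref{main_cor}: since $\zeta(F)=Z+\sum_j e_j W_F^j$ and each $W_F^j$ has its NE corner on the diagonal, the NE corner of $N^0(\zeta(F))$ is either $(m',n')$ or some $(jm/(a\delta),\, jn/(a\delta))$.

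Your proposed attack on (1), however, contains a concrete error. You claim that for $w_\lambda=(1,\lambda)$ with $\lambda$ slightly larger than $n/m$ one has $w_\lambda\text{-}\deg(x^{m'}y^{n'}) > w_\lambda\text{-}\deg(x^m y^n)$. But $(m',n')\in\Rect_{m,n}\setminus\mathcal{N}''$ forces $m'\le m$ and $n'<n$, so $m'+\lambda n' < m+\lambda n$ for every $\lambda>0$; the monomial $x^{m'}y^{n'}$ can never dominate $x^m y^n$ in a direction with positive components, and the leading-form argument you sketch cannot isolate it. The paper's technique for its partial result is quite different from your Magnus-based outline: it works in the extension ring $\mathcal{U}^{(1,2)}=\bigcup_p\mathbb{C}[x^{\pm 1/p},y]$ and applies an inductive sequence of \emph{decreasing automorphisms} $y\mapsto y+\alpha\, x^{-u/v}$, tracking the motion of the vertex pairs $(v_{Z^{(j)}},v_{F^{(j)}})$ until the process terminates, at which point Lemma~\ref{I3I2U5U4old} (on when $(A^a+B)^{b/a}$ can be a polynomial in one variable) yields a contradiction. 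Strengthening that iterative argument so as to pin the vertex exactly on the diagonal, rather than merely in $\mathfrak{R}$, is what remains open.
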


The NE vertex of $N^0(Z)$ (resp. $N^0(\zeta(F))$) is called the \emph{innermost vertex} (resp. \emph{inner vertex}). 
Conjecture~\ref{weak_Jac_conj}(1) (resp. Conjecture~\ref{weak_Jac_conj}(2)) is called the innermost vertex conjecture (resp. inner vertex conjecture).

The main theorem of this paper is as follows.

\begin{theorem}\label{main_thm}
Assume $m\le n$. Assume that  $F\in R_{m,n}$, $G\in R_{bm/a, bn/a}$, and $[F,G]\in \mathbb{C}$.  
Let $Z$ be the innermost  polynomial of $F$.  Then either $Z=0$, or
there exists a lattice point $(m',n')$ such that

\noindent\emph{(1)} $Z\in \overline{R}_{m',n'}$; 

\noindent\emph{(2)} $(m',n')\in \mathfrak{R} := \{(x,y)\in \mathbb{R}^2 \, : \, 0\le y< \frac{a-1}{a}n\,\text{ and }\,\mathfrak{m}(x-\frac{a}{a+b}) +\frac{a}{a+b}\le y\le \frac{n}{m}x \}$, where $\mathfrak{m}=(n-\frac{a}{a+b})/(m-\frac{a}{a+b})$ is the slope of the line through $(m,n)$ and $C:=(\frac{a}{a+b},\frac{a}{a+b})$.
\end{theorem}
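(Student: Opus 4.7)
The plan is to leverage the decomposition $F = P + Z$ with $P = \alpha_F(W_F)$ and $Z$ the innermost polynomial, and derive constraints on the NE vertex $(m', n')$ of $Z$ from the condition $[F, G] \in \mathbb{C}$. The key algebraic observation is that $P$ is a polynomial in $W_F$, so $[P, W_F] = 0$ and hence $[F, W_F] = [Z, W_F]$; this identity isolates $Z$'s contribution in the Jacobian expressions below.

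First I would record the initial geometric constraint $(m', n') \notin \mathcal{N}''$, immediate from $\mathrm{supp}(Z) \subseteq N^0(F) \setminus \mathcal{N}''$. Next I would extract edge information by examining the $(0,1)$- and $(1,0)$-leading forms of $F$ and $G$: since $[F, G] \in \mathbb{C}$, the leading form of $[F, G]$ in each of these directions must vanish, which forces $F_+^{(0,1)} = y^n f(x)$ and $G_+^{(0,1)} = y^{bn/a} g(x)$ to satisfy $f = c_1 h^a$ and $g = c_2 h^b$ for a common polynomial $h$ of degree $m/a$, with an analogous factorization on the right edge. This rigidifies how $Z$ can contribute to the top and right edges of $F$.

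The main work is then to prove the upper bound $n' \le \tfrac{n}{m} m'$ and the lower bound $n' \ge \mathfrak{m}(m' - \tfrac{a}{a+b}) + \tfrac{a}{a+b}$. For the upper bound, I would consider the leading $(u,v)$-form of $[Z, G]$ for a direction $(u,v)$ picking $(m', n')$ as the unique $Z$-maximum and $(bm/a, bn/a)$ as the unique $G$-maximum; its coefficient is a multiple of $m' n - n' m$, which vanishes exactly when $(m', n')$ lies on the line $OA$. Using the generalized Magnus formula from \cite{HLLN} to write $G = c_0 W_F^{b\delta} + G_1$ with $G_1$ of strictly smaller $(1,1)$-degree, one obtains
\[
[F, G] = c_0\, b\delta\, W_F^{b\delta - 1}\, [Z, W_F] + [F, G_1],
\]
and if $(m', n')$ lay strictly above $OA$, the leading monomial from the first summand at position $(bm/a + m' - 1,\, bn/a + n' - 1)$ would be uncancellable by any contribution from $[F, G_1]$, contradicting $[F, G] \in \mathbb{C}$. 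Combined with $(m', n') \notin \mathcal{N}''$, the inequality $n' \le \tfrac{n}{m}m'$ then forces $n' < \tfrac{a-1}{a} n$: if $n' \ge \tfrac{a-1}{a}n$, then $m' < \tfrac{a-1}{a}m$, so $(n/m)m' < \tfrac{a-1}{a}n$, a contradiction.

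The lower bound through $C$ is where I expect the main obstacle. Geometrically, $C = (\tfrac{a}{a+b}, \tfrac{a}{a+b})$ is the unique point with the property that if $A$ were moved to $C$, the NE vertex of $[F, G]$ would sit at $(0, 0)$ and $[F, G]$ would be constant purely by degree count. Continuing the Magnus expansion beyond its leading term, each successive correction introduces a contribution to $[F, G]$ whose NE position shifts along a line of slope $\mathfrak{m}$ through $C$. Requiring all these contributions to cancel, taking into account the edge rigidity established above, forces $(m', n')$ to lie on or above line $CA$: otherwise some sub-leading product generates a non-cancellable monomial in $[F, G]$. The main obstacle is pinning down precisely why $C$ (with its specific coordinates $\tfrac{a}{a+b}$) is the correct anchor, which requires a careful combinatorial analysis of how the Magnus coefficients, the edge structure, and the supports of the products $W_F^{b\delta - k}[Z, W_F]$ combine to produce the exact lower boundary of $\mathfrak{R}$.
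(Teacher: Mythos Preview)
Your proposal has a genuine gap at the lower bound through $C$, which you yourself flag as ``the main obstacle'' and for which you offer only a heuristic. The paper's mechanism here is entirely different from a term-by-term Magnus analysis, and I do not see how your sketch could be completed along the lines you indicate.

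What the paper actually does is an \emph{iterative change-of-variables argument} in the ring $\mathcal{U}^{(1,2)}=\bigcup_p \mathbb{C}[x^{\pm 1/p},y]$. Assuming the EN vertex of $N^0(Z)$ lies strictly to the right of the line through $(m,n)$ and $C$, one builds a sequence of tuples $(F^{(j)},G^{(j)},Z^{(j)},v_{Z^{(j)}},v_{F^{(j)}})$ by repeatedly applying ``decreasing'' automorphisms $y\mapsto y+\alpha\, x^{-u_j/v_j}$, where $(v_j,-u_j)$ is the normal direction to the current southeastern edge of $N^0(F^{(j)})$ and $\alpha$ is chosen among the roots of the edge polynomial so as to keep $\deg_y v_{Z^{(j+1)}}/\deg_y v_{F^{(j+1)}}\le \deg_y v_{Z^{(j)}}/\deg_y v_{F^{(j)}}<\tfrac{a-1}{a}$. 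One shows this process terminates in finitely many steps. At termination one computes
\[
\deg_{w}\big([F^{(j)},G^{(j)}]\big)=\tfrac{a+b}{a}\deg_{w}(F^{(j)})-\deg_{w}\big((1,1)\big),
\]
and the point $C=(\tfrac{a}{a+b},\tfrac{a}{a+b})$ is exactly the threshold at which this quantity vanishes; the bad-region hypothesis forces $\deg_w(F^{(j)})>\deg_w(C)$, hence $\deg_w[F^{(j)},G^{(j)}]>0$, contradicting $[F,G]\in\mathbb{C}$ (a separate lemma on $(A^a+B)^{b/a}$ rules out the borderline case). This is where $C$ comes from; it is a degree-balance point for the Jacobian, not an artifact of successive Magnus corrections.

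Two further points. First, your upper bound sketch also needs care: the claim that the leading monomial of $W_F^{b\delta-1}[Z,W_F]$ cannot be cancelled by $[F,G_1]$ requires controlling not just the $(1,1)$-degree of $G_1$ but its full support relative to the target monomial; the paper avoids this by running the same automorphism procedure in $\mathcal{U}^{(1,4)}$ (swapping the roles of $x$ and $y$). Second, you never address why the NE and EN vertices of $N^0(Z)$ coincide, which is what makes the conclusion $Z\in\overline{R}_{m',n'}$ meaningful; the paper deduces this a posteriori from the narrowness of $\mathfrak{R}$.
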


\begin{remark}\label{remark:narrow region}
The lattice points in $\mathfrak{R}$  are in a very narrow region as drawn in Figure \ref{fig:narrow region}. Given two lattice points $(x_1,y_1)$ and $(x_2,y_2)$ in this region, $x_1<x_2$ implies $y_1<y_2$,  because the width of the region at any height is $<\frac{a}{a+b}<1$. So there are at most $n(a-1)/a-1$ lattice points in the region.

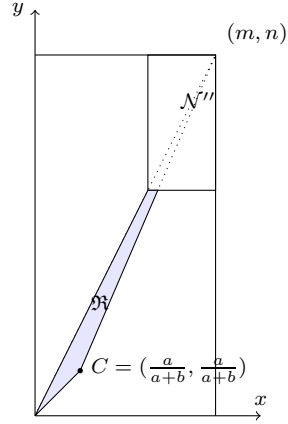
\begin{figure}[h]
\begin{tikzpicture}[scale=0.60]
\draw (0,8)--(4,8)--(4,0);
\draw[dotted] (0,0)--(4,8)--(2.72,5);
\draw (2.5,5)--(2.5,8)--(4,8)--(4,5)--(2.5,5);
\draw[fill=blue!10] (0,0)--(1,1)--(2.72,5)--(2.5,5)--(0,0);
\draw (3,7) node[anchor=west] {\tiny $\mathcal{N}''$};
\draw (1,2.5) node[anchor=west] {\tiny $\mathfrak{R}$};
\draw (1,1) node {\huge .};
\draw (1,1) node[anchor=west] {\tiny $C=(\frac{a}{a+b},\frac{a}{a+b})$};
\draw (4, 8) node[anchor=south west] {\tiny $(m,n)$};
\draw[->] (0,0) -- (5,0)
node[above] {\tiny $x$};
\draw[->] (0,0) -- (0,9)
node[left] {\tiny $y$};
\end{tikzpicture}
\caption{The narrow (shaded) region}
\label{fig:narrow region}
\end{figure}
\end{remark}

\begin{corollary}\label{main_cor}
Theorem \ref{main_thm} still holds if $Z$ is replaced by the inner polynomial $\zeta(F)$ of $F$. \end{corollary}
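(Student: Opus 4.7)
The plan is to deduce Corollary~\ref{main_cor} directly from Theorem~\ref{main_thm} by exploiting the identity
\[
\zeta(F) = Z + R, \qquad R := \sum_{j=0}^{(a-1)\delta-1} e_j W_F^{\,j},
\]
which is nothing but a rearrangement of~\eqref{eq:Z}. The strategy is to describe $N^0(R)$ by hand, combine this with the information Theorem~\ref{main_thm} supplies about $N^0(Z)$, and then verify that the expected northeast vertex of $\zeta(F)$ survives the addition.

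First I would analyze $R$. Because $W_F \in R_{m/(a\delta),\,n/(a\delta)}$, each power $W_F^{\,j}$ lies in $R_{jm/(a\delta),\,jn/(a\delta)}$, so $N^0(W_F^{\,j}) = \Rect_{jm/(a\delta),\,jn/(a\delta)}$ with NE corner on the ray $y = (n/m)x$. If $R \neq 0$ and $j^\star := \max\{j : e_j \neq 0\}$, then the coefficient of $x^{j^\star m/(a\delta)} y^{j^\star n/(a\delta)}$ in $R$ equals $e_{j^\star} \neq 0$ and $N^0(R) \subseteq \Rect_{j^\star m/(a\delta),\,j^\star n/(a\delta)}$. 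Setting $V_R := (j^\star m/(a\delta),\,j^\star n/(a\delta))$, the bound $j^\star \le (a-1)\delta - 1$ forces its $y$-coordinate strictly below $(a-1)n/a$; since $V_R$ sits on the upper boundary $y=(n/m)x$ of $\mathfrak{R}$, it belongs to $\mathfrak{R}$. Next I would apply Theorem~\ref{main_thm} to $Z$: either $Z = 0$, or there is $(m',n') \in \mathfrak{R}$ with $Z \in \overline{R}_{m',n'}$. If both $Z$ and $R$ vanish, $\zeta(F) = 0$; if exactly one vanishes the conclusion is immediate. In the remaining case, Remark~\ref{remark:narrow region} shows that $(m',n')$ and $V_R$ are comparable in componentwise order on lattice points of $\mathfrak{R}$, so one of them strictly dominates the other and becomes the candidate NE/EN vertex of $\zeta(F)$; the required rectangle containment then follows from the containments already established for $N^0(Z)$ and $N^0(R)$.

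The step I expect to require the most care is the no-cancellation check at the candidate vertex. If $(m',n')$ dominates $V_R$, then $V_R$ is strictly southwest of $(m',n')$, so $(m',n') \notin N^0(R)$ and the coefficient of $\zeta(F)$ at $(m',n')$ coincides with that of $Z$, which is nonzero. Conversely, if $V_R$ dominates $(m',n')$, the defining property of the innermost polynomial---that $Z$ carries no monomial of the form $x^{jm/(a\delta)} y^{jn/(a\delta)}$ with $j \in [0,(a-1)\delta-1]$---forces the contribution of $Z$ at $V_R$ to vanish, leaving the coefficient $e_{j^\star} \neq 0$. The same forbidden-monomials property also excludes the equality $(m',n') = V_R$. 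Hence in every case the NE/EN vertex of $\zeta(F)$ lies in $\mathfrak{R}$ and $\zeta(F) \in \overline{R}_{m'',n''}$ for this vertex, which is exactly the conclusion of Theorem~\ref{main_thm} with $Z$ replaced by $\zeta(F)$.
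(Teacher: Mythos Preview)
Your proposal is correct and follows essentially the same approach as the paper's proof: decompose $\zeta(F)=Z+R$, identify the NE vertex $V_R=(j^\star m/(a\delta),j^\star n/(a\delta))$ of $R$, observe that $V_R\in\mathfrak{R}$ and $V_R\neq(m',n')$, use Remark~\ref{remark:narrow region} to compare $V_R$ with $(m',n')$, and then check that the dominant vertex survives in $\zeta(F)$. Your write-up is in fact slightly more careful than the paper's, as you handle the case $Z=0$ explicitly and spell out the no-cancellation argument at the candidate vertex.
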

\begin{proof}
We have $\zeta(F) = Z + \sum_{j=0}^{(a-1)\delta-1} e_jW_F^j$. If all $e_j$ are $0$, then $\zeta(F)=Z$, and the statement trivially holds. Otherwise, let $j$ be the largest integer such that $e_j\neq0$. Note that $(jm/a\delta,jn/a\delta)$ is the NE vertex of $N^0(W_F^j)$, and  $(jm/a\delta,jn/a\delta)\neq(m',n')$ by the definition of $Z$ in \eqref{eq:Z}.  
By Remark \ref{remark:narrow region}, $(jm/a\delta,jn/a\delta)$ may lie to the southwest, or to the northeast (or north) of $(m',n')$. 
In the former case, $\zeta(F)$ is in $\overline{R}_{m',n'}$, thus (1) and (2) still hold; 
in the latter case, we see that $\zeta(F)$ is in $\overline{R}_{jm/a\delta,jn/a\delta}$ , and $(jm/a\delta,jn/a\delta)\in\mathfrak{R}$, so (1) and (2) hold after replacing $(m',n')$ by $(jm/a\delta,jn/a\delta)$.
\end{proof}

\begin{corollary}
Conjecture \ref{weak_Jac_conj} holds for the case that $\frac{e}{d}$ is sufficiently large, more precisely, if $\frac{e}{d}>\frac{n-m}{a}-1$. 
\end{corollary}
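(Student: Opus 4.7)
The plan is to reduce Conjecture~\ref{weak_Jac_conj} to a purely combinatorial statement about lattice points and then resolve that using the geometry of the region $\mathfrak{R}$. By Theorem~\ref{main_thm} and Corollary~\ref{main_cor}, when $Z\neq 0$ (resp.\ $\zeta(F)\ne 0$) the innermost (resp.\ inner) vertex $(m',n')$ lies in $\mathfrak{R}$, and the rectangle containment required by Conjecture~\ref{weak_Jac_conj} is already supplied. So it suffices to show: under $a+b>n-m$ (the translation of $\frac{e}{d}>\frac{n-m}{a}-1$), every lattice point $(m',n')\in\mathfrak{R}$ lies on the line $y=\frac{n}{m}x$. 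The degenerate case $n=m$ is trivial because $\mathfrak{m}=1$ and $\mathfrak{R}$ collapses to a subsegment of $y=x$; henceforth assume $n>m$.

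Suppose for contradiction that $(m',n')\in\mathfrak{R}\cap\mathbb{Z}_{\ge 0}^2$ satisfies $nm'-n'm\ne 0$. Since $a\mid m$ and $a\mid n$, the integer $g:=\gcd(m,n)$ is at least $a$, and $nm'-n'm$ is a nonzero multiple of $g$. The upper-boundary condition $n'\le\tfrac{n}{m}m'$ forces $nm'-n'm\ge 0$, so $nm'-n'm=kg$ for some integer $k\ge 1$. Set $p=m-m'$ and $q=n-n'$. The lower-boundary condition of $\mathfrak{R}$, after elementary manipulation, reads $mq-np\le\tfrac{a}{a+b}(q-p)$; substituting $mq-np=kg$ and then $q=(np+kg)/m$ gives
\[
p \;\ge\; \frac{kg\,[m(a+b)-a]}{a(n-m)}.
\]
Combined with $p\le m$ and $g\ge a$, this yields the necessary inequality $m[k(a+b)-(n-m)]\le ka$.

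The hypothesis $a+b\ge n-m+1$ and the fact that $b\ge a+1$ (forced by $\gcd(a,b)=1$ and $b>a$) give $k(a+b)-(n-m)\ge(k-1)(a+b)+1\ge(k-1)(2a+1)+1$, which strictly exceeds $k$ for every $k\ge 2$; combined with $m\ge a$ this contradicts $m[k(a+b)-(n-m)]\le ka$, ruling out $k\ge 2$. For $k=1$ the inequality forces $m=a$ and $(a+b)-(n-m)=1$; then $g\mid m=a$ together with $g\ge a$ gives $g=a$, and the lower bound on $p$ simplifies to $p\ge a=m$, so $p=m$ and $m'=0$. But $m'=0$ combined with the upper boundary $n'\le\tfrac{n}{m}m'=0$ and $n'\ge 0$ forces $n'=0$, whence $nm'-n'm=0$, contradicting $nm'-n'm=g>0$.

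The main obstacle is this edge case $k=1$, $m=a$: there the lower-boundary width inequality is met with equality and gives no room, so the contradiction must be extracted from the upper-boundary sign condition combined with $n'\ge 0$ rather than from the width estimate itself. Once every lattice point of $\mathfrak{R}$ has been shown to lie on $y=\tfrac{n}{m}x$, applying this conclusion to the NE vertex of $N^0(Z)$ via Theorem~\ref{main_thm} yields part~(1) of Conjecture~\ref{weak_Jac_conj}, and applying it to the NE vertex of $N^0(\zeta(F))$ via Corollary~\ref{main_cor} yields part~(2).
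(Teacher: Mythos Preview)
Your proof is correct. Both you and the paper reduce the statement to the combinatorial claim that, under $a+b>n-m$, every lattice point of $\mathfrak{R}$ lies on the diagonal $y=\tfrac{n}{m}x$, and both exploit the divisibility $a\mid\gcd(m,n)$ to bound $|nm'-n'm|$ from below by $a$.

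The difference is purely in how the upper bound is obtained. The paper observes that for $P=(m',n')$ off the diagonal, the triangle $OP(m,n)$ has area $\tfrac12|nm'-n'm|$ bounded above by the area $\tfrac{a(n-m)}{2(a+b)}$ of the triangle $OC(m,n)$ (since $P$ lies between the lines $OA$ and $CA$, its distance to $OA$ is at most that of $C$), and bounded below by $\tfrac12\gcd(m,n)\ge\tfrac{a}{2}$ via Pick's theorem; combining gives $a+b\le n-m$ in one stroke. Your route instead manipulates the defining inequalities of $\mathfrak{R}$ directly, passing through the auxiliary bound $p\ge kg[m(a+b)-a]/(a(n-m))$ and $p\le m$. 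This is equivalent in content but loses a little sharpness, which is why you are forced into the separate edge case $k=1$, $m=a$; had you used the observation $n'\ge m'$ (immediate from $\mathfrak{m}\ge1$) to get $q-p\le n-m$ directly, the inequality $kg\le\tfrac{a}{a+b}(q-p)\le\tfrac{a(n-m)}{a+b}$ would give $a+b\le n-m$ with no case split, matching the paper's argument.
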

\begin{proof}
The idea of the proof is that when $\frac{b}{a}=\frac{e}{d}\to\infty$, the point $C\to(0,0)$, thus $\mathfrak{R}$ gets narrower, and eventually all integer points in $\mathfrak{R}$ must lie on the diagonal of $\Rect_{m,n}$. 
More precisely, assume $P$ is an integer point in $\mathfrak{R}$ but not on the diagonal of $\Rect_{m,n}$.  The area of $OCP$ is less than or equal to 
the area of the triangle $OC(m,n)$, which is $\frac{1}{2}\begin{vmatrix}a/(a+b)&a/(a+b)\\m&n\end{vmatrix}=\frac{a(n-m)}{2(a+b)}$. 
But Pick's theorem, the area of the triangle $OCP$ is $i+\frac{j}{2}-1$, where $i$ is the number of interior integer points, $j$ is the number of integer points on the boundary.  Since $i\ge0$, $j\ge\gcd(m,n)+2$, we have
$\frac{a(n-m)}{2(a+b)}\ge i+\frac{j}{2}-1 \ge \frac{\gcd(m,n)}{2} \ge \frac{a}{2}$, thus $\frac{e}{d}=\frac{b}{a} \le \frac{n-m}{a}-1$. 
\end{proof}

\section{Proof of the main theorem}

\subsection{A useful lemma}

Recall that for a polynomial $f(s)$ in one variable,  $\deg f $ is the highest of the degrees of $f$'s monomials with non-zero coefficients; Analogously,  we denote ${\rm lowdeg\,} f$ to be the lowest of the degrees of $f$'s monomials with non-zero coefficients. For example,
$\deg(2x^3+4x^5)=5$ and ${\rm lowdeg\,} (2x^3+4x^5)=3$. 


Recall the definition of the (generalized) binomial coefficients:
for $c\in\mathbb{R}$, $i\in \mathbb{Z}_{\ge0}$, denote the binomial coefficient
$\binom{c}{i}=\frac{c(c-1)(c-2)\cdots(c-i+1)}{i!}$. 
Let $A,B\in\mathbb{C}[s]$ such that the constant term of $A$ is nonzero. Then $A^{-1}$ is well-defined in $\mathbb{C}[[s]]$.  
For $a\in\mathbb{Z}_{>0}$, $c\in\mathbb{Q}$ such that $ac\in\mathbb{Z}$, define the rational power expansion 
$(A^a+B)^{c} = \sum_{i=0}^\infty {c\choose i} A^{a(c-i)}B^i$.  
(See \S\ref{section:Appendix B} for the definition of multinomial coefficients and a similar expansion.) 

\begin{lemma}\label{I3I2U5U4old}
Let $A,B\in \mathbb{C}[s]$. Suppose that the constant term of $A$ is nonzero, $\deg B < a({\rm lowdeg\,} B)$, and $\deg A< {\rm lowdeg\,} B$.
For each $\mu\in\mathbb{Z}_{\ge 0}$, let $$R_\mu= \bigg[(A^a + B)^{b/a}\bigg]_{s^{\mu}} = \bigg[\sum_{i=0}^\infty\binom{b/a}{i}A^{b-ai}B^i\bigg]_{s^{\mu}}.$$  
If $R_\mu=0$ for all sufficiently large $\mu$, then $B=0$. In particular, if $(A^a + B)$ is the $a$-th power of an element of $\mathbb{C}[s]$,  then $B=0$. 
\end{lemma}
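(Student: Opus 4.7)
I would argue by contradiction. Suppose $B \neq 0$ and set $\ell := \mathrm{lowdeg}\, B$ and $d := \deg A$; since $A$ has nonzero constant term we have $d \geq 0$, and the hypothesis $d < \ell$ forces $\ell \geq 1$. The assumption that $R_\mu = 0$ for all sufficiently large $\mu$ says precisely that $P := (A^a+B)^{b/a}$, which is well defined as a formal power series in $\mathbb{C}[[s]]$ since $A^a+B$ has nonzero constant term (choose a branch of the $a$-th root), is in fact a polynomial.

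Next, raise to the $a$-th power to obtain the identity $P^a = (A^a+B)^b$ in $\mathbb{C}[s]$. Because $\mathbb{C}[s]$ is a UFD and $\gcd(a,b) = 1$ (this is part of the standing assumption $(a,b,m,n) \in \mathcal{Q}$), the multiplicity of every irreducible factor of $A^a+B$ in this identity must be divisible by $a$. Hence $A^a + B = c \cdot Q^a$ for some $Q \in \mathbb{C}[s]$ and $c \in \mathbb{C}^\times$; absorbing an $a$-th root of $c$ into $Q$, we may assume $A^a + B = Q^a$.

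The finish is a short degree calculation. Evaluating at $s = 0$ gives $Q(0)^a = A(0)^a \neq 0$, and after replacing $Q$ by $\zeta^{-1} Q$ for an appropriate $a$-th root of unity $\zeta$, we may arrange $Q(0) = A(0)$, so $s$ divides $Q - A$. Write $Q - A = s^k C$ with $C(0) \neq 0$; if instead $Q = A$ then $B = 0$, contradicting the assumption. Factoring $B = Q^a - A^a = (Q - A)\sum_{j=0}^{a-1} Q^j A^{a-1-j}$ and noting the sum has nonzero constant term $a A(0)^{a-1}$, we obtain $\mathrm{lowdeg}\, B = k$, hence $k = \ell$. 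Then $\deg(Q - A) \geq \ell > d = \deg A$, which forces $\deg Q = \deg(Q - A) \geq \ell$, and so $\deg B = \deg(Q^a - A^a) = a \deg Q \geq a\ell$, contradicting $\deg B < a\ell$. Therefore $B = 0$. The ``in particular'' clause follows immediately: if $A^a + B = V^a$ for some $V \in \mathbb{C}[s]$, then $(A^a+B)^{b/a} = V^b$ is already a polynomial, so $R_\mu = 0$ for $\mu > b\deg V$, and the main conclusion applies.

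The step I expect to require the most care is the passage from $P^a = (A^a+B)^b$ to $A^a + B = cQ^a$: it relies essentially on $\gcd(a,b) = 1$, and one has to track the choice of branch of the $a$-th root when normalizing $Q(0) = A(0)$. Everything after that is elementary bookkeeping: the two hypotheses $\deg A < \ell$ and $\deg B < a\ell$ squeeze the difference $Q - A$ from both sides and force $Q = A$.
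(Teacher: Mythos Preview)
Your argument is correct and follows essentially the same route as the paper: reduce to $A^a+B=Q^a$ (the paper calls it $C$), normalize constant terms, factor $B=(Q-A)\sum Q^jA^{a-1-j}$ to identify $\mathrm{lowdeg}(Q-A)=\mathrm{lowdeg}\,B$, and then derive a contradiction from the degree constraints. The only cosmetic difference is that the paper bounds $\deg(C-A)$ from above via $\deg C\le\max(\deg A,(\deg B)/a)$, whereas you bound $\deg Q$ from below via $\deg(Q-A)\ge\ell>\deg A$; these are equivalent rearrangements of the same inequality.
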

\begin{proof}
Assume the contrary that $R_\mu=0$ for all sufficiently large $\mu$ and $B\neq 0$. Without loss of generality we can assume the constant term of $A$ is $1$. 
Write $A=1+ a_1 s + \cdots + a_j s^j$ (where $a_j$ may be $0$) and $B=b_{j+1} s^{j+1}+\cdots + b_{k} s^{k}$ (where $b_{j+1}, b_{k}\neq0$, and $j+1\le k$). 
Since $R_\mu=0$ for all sufficiently large $\mu$, $(A^a + B)^{b/a}$ must be a polynomial. Then $A^a+ B=C^a$ for some polynomial $C=1+ c_1 s + \cdots + c_\ell s^\ell \in  \mathbb{C}[s]$ with $c_\ell\neq0$. Note that $\ell=\deg C\le \max(\deg A, (\deg B)/a)\le \max(j,k/a)$. 
Write $B=C^a-A^a=(C-A)(C^{a-1}+C^{a-2}A+\cdots+A^{a-1})$. Since the constant term of $(C^{a-1}+C^{a-2}A+\cdots+A^{a-1})$ is $a\neq0$, we have ${\rm lowdeg\,} (C-A) = {\rm lowdeg\,} B=j+1$. On the other hand, 
$\deg (C-A)\le \max(\deg C,\deg A)\le (\ell,j)\le \max(j,k/a)$. By ${\rm lowdeg\,} (C-A)\le \deg(C-A)$, we have
$j+1\le \max(j, k/a)$, so  $k\ge a(j+1)$, that is,  $\deg B\ge a({\rm lowdeg\,} B)$, a contradiction. 
\end{proof}

\subsection{Proof of Theorem~\ref{main_thm} }

\begin{definition}
 
Let $\mathcal{U}^{(1,2)}_p=\mathbb{C}[x^{\pm 1/p}, y]$ for $p\in\mathbb{Z}_{>0}$, and let $\mathcal{U}^{(1,2)}=\cup_{p\in\mathbb{Z}_{>0}} \mathcal{U}^{(1,2)}_p$ as a subring of $\mathbb{C}\langle\langle x\rangle\rangle[y]$, where $\mathbb{C}\langle\langle x\rangle\rangle=\{ \sum_{i\ge r} a_ix^{i/n} \ | \  r,n\in\mathbb{Z}, n\ge1\}$ (see \cite[\S1.2]{CA}). 

Similarly, 
Let $\mathcal{U}^{(1,4)}_p=\mathbb{C}[x,y^{\pm 1/p}]$ for $p\in\mathbb{Z}_{>0}$, and let $\mathcal{U}^{(1,4)}=\cup_{p\in\mathbb{Z}_{>0}} \mathcal{U}^{(1,4)}_p$. 
\end{definition}
In the above definition,  (1,2) stands for the first and second quadrants, and (1,4) stands for the first and fourth quadrants.

\begin{definition}
Suppose that $w=(v,-u)\in \mathcal{D}$ with coprime integers $u\ge0$ and $v>0$.  Let $f\in \mathcal{U}^{(1,2)}$. Suppose that $f_+^w = \prod_i (x^{u/v} y - \alpha_i)^{m_i}$. 
An automorphism $\phi\in\text{Aut}(\mathcal{U}^{(1,2)})$ is called \emph{decreasing} with respect to $f$ if $\phi(x)=x$ and $\phi(y)=y + \alpha_i x^{-u/v}$ for some $i$. This is called decreasing because if all $\alpha_i$ are nonzero, then  $\phi$ decreases the area of $N^0(F)\cap (\text{the first quadrant})$. For example, the maps in Figures~\ref{one_root} and ~\ref{two_roots} are decreasing. 
\end{definition}

As in \cite[Definition 1.5]{GGV}, we give an order on directions as follows.

\begin{definition}\label{definition:direction order}
We consider the map $\mathcal{D}\to S^1$ by assigning to each direction its corresponding unit vector in $S^1$, 
and we define an \emph{interval} in $\mathcal{D}$ as the preimage 
under this map of an arc of $S^1$ that is not the whole circle.
We consider each interval endowed with the order that increases
counterclockwise. 
\end{definition}

\begin{definition}
Fix $(a,b,m,n)\in\mathcal{Q}$. Let $C=(\frac{a}{a+b},\frac{a}{a+b})$.
For any point $v\in \mathbb{Q}_{>\frac{a}{a+b}}\times \mathbb{Z}_{>0}$, let $\mathcal{L}_v$ be the intersection of the first quadrant with the right side of the line through $v$ and $C$, that is, 
$$\mathcal{L}_v=\{(x,y)\in \mathbb{R}_{\ge 0}^2 \, : \, 0<y< \mathfrak{m}_{v}(x-\frac{a}{a+b}) +\frac{a}{a+b} \},$$
where $\mathfrak{m}_{v}>0$ is the slope of the line through $v$ and $C$. 
\end{definition}

\begin{proposition}\label{no12v2}
Let $(a, b, m, n) \in \mathcal{Q}$ and assume $m\le n$. 
Suppose that $\ff,\GG\in \mathbb{C}[x,y]$ satisfy the following:

\noindent\emph{(1)} $[\ff,\GG]  \in  \mathbb{C}$;

\noindent\emph{(2)} $\ff\in R_{m,n}$ and $\GG\in R_{bm/a,bn/a}$.

\noindent Let $Z$ be the innermost polynomial of $F$. Then the EN vertex of $N^0(Z)$ must be in $$\{(x,y)\in \mathbb{R}_{\ge0}^2 \, : \, y\ge \mathfrak{m}_{(m,n)}(x-\frac{a}{a+b}) +\frac{a}{a+b} \}.$$ 
\end{proposition}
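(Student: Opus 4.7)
\textit{Proof proposal.} The plan is to argue by contradiction. Suppose the EN vertex $v^* = (m', n')$ of $N^0(Z)$ lies strictly below the line $L$ through $C = (\frac{a}{a+b}, \frac{a}{a+b})$ and $(m, n)$. The goal is to reduce the hypothesis $[F, G] \in \mathbb{C}$ to a one-variable identity that Lemma~\ref{I3I2U5U4old} rules out.

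To detect this strictly-below-$L$ position, I would work with the direction $w_0 = ((a+b)n - a,\, a - (a+b)m)$, which (after clearing $\gcd$) is perpendicular to $L$ and oriented so that $L$ itself is the level set $\{w_0 = a(n-m)\}$. Points strictly below $L$ then have $w_0$-degree strictly greater than $a(n-m)$; in particular $w_0(v^*) > a(n-m)$. Combined with the containment $\supp Z \subseteq \Rect_{m,n} \setminus \mathcal{N}''$, this means that the $w_0$-leading part of $Z$ records a genuine nonzero contribution coming from the monomial $x^{m'} y^{n'}$, which by the definition of the EN vertex is also the unique rightmost-then-topmost vertex of $N^0(Z)$.

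I would then feed the structural identity $F = \beta(W_F)^a + \sum_{j=0}^{(a-1)\delta - 1} e_j W_F^j + Z$ into the generalized Magnus formula (Theorem~\ref{Magnus_thm}), which asserts that $G$ is approximated by $\sum_{\gamma \geq 0} c_\gamma F^{(e - \gamma)/d}$ with $e/d = b/a$. Because $G$ is a polynomial and $[F, G] \in \mathbb{C}$, these rational powers of $F$ must agree with a polynomial to all but finitely many orders. Specializing along a Puiseux branch adapted to $w_0$ (so that $x$ and $y$ become one-variable series in a parameter $s$), $F$ specializes to $A(s)^a + B(s)$, where $A(s)$ arises from $\beta(W_F)$ and $B(s)$ collects the contributions of $\sum_j e_j W_F^j + Z$. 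The assumption that $v^*$ is strictly below $L$ and the separation of $v^*$ from $\mathcal{N}''$ should supply the degree inequalities $\deg A < {\rm lowdeg\,} B$ and $\deg B < a \cdot {\rm lowdeg\,} B$ required by Lemma~\ref{I3I2U5U4old}, while forcing ${\rm lowdeg\,} B$ to correspond to the monomial $x^{m'}y^{n'}$, so in particular $B \not\equiv 0$.

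Lemma~\ref{I3I2U5U4old} then applies: since $(A^a + B)^{b/a}$ must represent a polynomial modulo polynomial corrections coming from the tail of the Magnus expansion, the lemma forces $B = 0$, contradicting $v^* \in \supp Z$. I expect the main obstacle to be the precise design of the Puiseux substitution: ensuring that Magnus' formula descends cleanly to a one-variable identity in which $A$ and $B$ are identifiable from $\beta(W_F)$ and $Z$ respectively, and that the degree inequalities needed for Lemma~\ref{I3I2U5U4old} follow from the single geometric assumption that $(m', n')$ is strictly below $L$. A perturbation of $w_0$ within the cone of directions making $L$ the level set may be needed so that no other lattice points of $\Rect_{m,n}$ on $L$ (other than $(m,n)$ and possibly a few collinear ones) contaminate the one-variable leading form.
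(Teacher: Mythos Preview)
Your proposal has a genuine gap: the single direction $w_0$ normal to the line $L$ through $C$ and $(m,n)$ cannot do the job. Since $m\le n$, the vector $w_0=((a+b)n-a,\,a-(a+b)m)$ has negative second coordinate, so on the rectangle $N^0(F)=\Rect_{m,n}$ the $w_0$-maximal vertex is $(m,0)$, not $(m,n)$. Hence the $w_0$-leading form of $F$ is a monomial in $x$ alone, and the structural decomposition $F=\beta(W_F)^a+\sum_j e_jW_F^j+Z$ is invisible in this direction: you will not obtain anything of the shape $A(s)^a+B(s)$ with $A$ coming from $\beta(W_F)$ and $B$ coming from $Z$. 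No perturbation of $w_0$ within the cone of directions normal to $L$ fixes this, because any such direction still picks out the southeast portion of $\Rect_{m,n}$. Magnus' formula along $w_0$ likewise yields nothing useful here.

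The paper's argument is essentially different and the missing idea is \emph{iteration}. One works in $\mathcal{U}^{(1,2)}=\bigcup_p\mathbb{C}[x^{\pm 1/p},y]$ and builds a finite sequence of tuples $(F^{(j)},G^{(j)},Z^{(j)},v_{Z^{(j)}},v_{F^{(j)}})$ by repeatedly applying decreasing automorphisms $\varphi_j\colon y\mapsto y+\alpha\,x^{-u_j/v_j}$, where at each step $w_j=(v_j,-u_j)$ is the outward normal to the current ``lower'' edge of $N^0(F^{(j)})$ at $v_{F^{(j)}}$ (so $w_0=(1,0)$, not your $w_0$). The root $\alpha$ is chosen among the roots of $(F^{(j)})_+^{w_j}$ so as to preserve the ratio inequality $\deg_y v_{Z^{(j+1)}}/\deg_y v_{F^{(j+1)}}\le \deg_y v_{Z^{(j)}}/\deg_y v_{F^{(j)}}<\frac{a-1}{a}$, and a similar-triangles argument keeps $v_{Z^{(j)}}$ in the region $\mathcal{L}_{v_{F^{(j)}}}$. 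A denominator-bounding argument shows the process terminates. Only at the terminal step does one obtain a direction $w$ for which $(F^{(j)})_+^{w}=((Q^{(j)})_+^{w})^a+(Z^{(j)})_+^{w}$ genuinely holds; there a degree count using the point $C$ forces $[(F^{(j)})_+^{w},(G^{(j)})_+^{w}]=0$, hence $(F^{(j)})_+^{w}$ is an $a$-th power, and Lemma~\ref{I3I2U5U4old} (applied with $s=x^{-u_j/v_j}$) gives the contradiction. This iterative peeling is the core of the proof and cannot be collapsed to a single substitution.
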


\begin{proof}
If $Z=0$, then $N^0(Z)$ is the origin and the statement obviously holds. So we assume that $Z\neq0$ for the rest of the proof.

Let $\mathcal{T}$ be the set of 5-tuples $(f,g,z,v_z,v_f)$ of three Laurent polynomials $f,g,z \in \mathcal{U}^{(1,2)}$ and two points $v_z,v_f\in \mathbb{Q}\times \mathbb{Z}_{>0}$ satisfying the following:

\noindent{(a)} $[f,g]\in\mathbb{C}$ and $f-z=\mathfrak{Q}^a+\sum_{i=0}^{(a-1)\delta-1} e_{i}\mathfrak{W}^{i}$, where $\mathfrak{W} \in \mathcal{U}^{(1,2)}$, and $\mathfrak{Q}$ is a degree $\delta$ polynomial of $\mathfrak{W}$;

\noindent{(b)} $v_z$ is a vertex of $N^0(z)$ and $v_f$ is a vertex of $N^0(f)$;

\noindent{(c)} $\text{deg}_y(v_z)< \frac{a-1}{a} \text{deg}_y(v_f)$;

\noindent{(d)} $v_z\in \mathcal{L}_{v_f} \cup \{(x,0)\in \mathbb{R}^2 \, : \, x>0\}$.

Aiming at contradiction, suppose that the EN vertex of $N^0(Z)$ is in $$\mathcal{L}_{(m,n)} \cup \{(x,0)\in \mathbb{R}^2 \, : \, x>0\}.$$ 

\noindent Step 1. Construct a finite sequence of 5-tuples $(F^{(j)}, G^{(j)}, Z^{(j)}, v_{Z^{(j)}}, v_{F^{(j)}})\in \mathcal{T}$.

Let $p_0=1$. 
Let $F^{(0)}=F$,  $G^{(0)}=G$, and $Z^{(0)}=Z=F-Q^a - \sum_{i=0}^{(a-1)\delta-1} e_{i}\EEE^{i}$.  Let $v_{Z^{(0)}}$ be the EN vertex of $N^0(Z)$, and $v_{F^{(0)}}$ be the EN vertex of $N^0(F)$, which is $(m,n)$. Then $$(F^{(0)},  G^{(0)}, Z^{(0)}, v_{Z^{(0)}}, v_{F^{(0)}})\in \mathcal{T}.$$
We will inductively construct a finite sequence of 5-tuples $(F^{(j)}, G^{(j)}, Z^{(j)}, v_{Z^{(j)}}, v_{F^{(j)}})\in \mathcal{T}$ for $j\ge 0$ as follows.
Suppose that the 5-tuple for $j$ has been constructed. 
Let $v'_{F^{(j)}}$ be the vertex of $N^0(F^{(j)})$ uniquely determined by the condition that $v_{F^{(j)}}v'_{F^{(j)}}$ is an edge of $N^0(F^{(j)})$ and if 
$v_{F^{(j)}}v''_{F^{(j)}}$ is the other edge, then its outward normal direction is larger than the outward normal direction of $v_{F^{(j)}}v'_{F^{(j)}}$  (in the sense of Definition \ref{definition:direction order}).
It is easy to see that $\deg_y(v'_{F^{(j)}})<\deg_y(v''_{F^{(j)}})$. 
Let $w_j=(v_j,-u_j)$ be the outward normal direction of the edge $v_{F^{(j)}}v'_{F^{(j)}}$,  where $u_j\ge0$ and $v_j>0$ are coprime integers. Given a point $P$ and a direction vector $w$, denote ${\rm Line}_w(P)$ to be the line passing through $P$ that is normal to $w$.

We stop the inductive process if the following condition fails: 

\noindent ($\star_j$) \quad\quad\quad The point $v(Z^{(j)})$ lies strictly to the left of ${\rm Line}_{w_j}(v_{F^{(j)}})$.

\noindent Note that ($\star_0$)  holds since $w_j=(1,0)$, and $v(Z^{(0)})$ is strictly to the left of ${\rm Line}_{w_j}(v_{F^{(0)}})=\{x=m\}$. 
 
Suppose ($\star_j$) holds. We can write 
$$(F^{(j)})_+^{w_j} =c_j x^{\text{deg}_{w_j}(F^{(j)})/v_j}  \prod_{i=1}^{\mathfrak{u}_j} (x^{u_j/v_j} y - \alpha_{ji})^{m_{ji}},$$
where $\alpha_{ji}\in\mathbb{C}$ ($1\le j\le \mathfrak{u}_j$) are all distinct, 
$\mathfrak{u}_j, m_{j1},...,m_{j\mathfrak{u}_j}\in\mathbb{Z}_{>0}$, and $c_j\in\mathbb{C}\setminus\{0\}$. 
We can write 
$$(Z^{(j)})_+^{w_j}= c'_j x^{\text{deg}_{w_j}(Z^{(j)})/v_j} \bigg(\prod_{i=1}^{\mathfrak{u_j}} (x^{u_j/v_j} y - \alpha_{ji})^{n_{ji}}\bigg)\bigg(\prod_{i=\mathfrak{u}_j+1}^{\mathfrak{v}_j} (x^{u_j/v_j} y - \alpha_{ji})^{n_{ji}}\bigg)$$
where $\alpha_{ji}\in\mathbb{C}$ ($1\le j\le \mathfrak{v}_j$) are all distinct, $\alpha_{j1},\dots,\alpha_{j\mathfrak{u}_j}$ are those occurred in the expression of $(F^{(j)})_+^{w_j}$, $\mathfrak{v}_j\ge\mathfrak{u}_j$, $n_{j1},...,n_{j\mathfrak{v}_j}\in\mathbb{Z}_{\ge0}$, and $c'_j\in\mathbb{C}\setminus\{0\}$. 
Note that $\text{deg}_y(v_{F^{(j)}})=\sum_{i=1}^{\mathfrak{u}_j} m_{ji}$, 
$\text{deg}_y(v_{Z^{(j)}})=\sum_{i=1}^{\mathfrak{v}} n_{ji}$.
Since $(F^{(j)}, G^{(j)}, Z^{(j)}, v_{Z^{(j)}}, v_{F^{(j)}})\in \mathcal{T}$, we have 
\begin{equation}\label{ineq77}
\sum_{i=1}^{\mathfrak{u}_j} n_{ji}<\frac{a-1}{a}\sum_{i=1}^{\mathfrak{u}_j} m_{ji},\textrm{ or equivalently, }\frac{\text{deg}_y(v_{Z^{(j)}})}{\text{deg}_y(v_{F^{(j)}})}<\frac{a-1}{a}.
\end{equation}

Choose an integer $\mathfrak{i}=\mathfrak{i}_j\in \{1,\dots, \mathfrak{u}_j\}$ such that the following holds (for example, we can take $\mathfrak{i}$ such that $\frac{n_{j\mathfrak{i}}}{m_{j\mathfrak{i}}}$ is minimal): 
\begin{equation}\label{ineq70707}
\frac{n_{j\mathfrak{i}}}{m_{j\mathfrak{i}}} \le \frac{\sum_{i=1}^{\mathfrak{u}_j} n_{ji}}{\sum_{i=1}^{\mathfrak{u}_j} m_{ji}},
\textrm{ or equivalently, }
\frac{\text{deg}_y(v_{Z^{(j+1)}})}{\text{deg}_y(v_{F^{(j+1)}})}
\le 
\frac{\text{deg}_y(v_{Z^{(j)}})}{\text{deg}_y(v_{F^{(j)}})}
\end{equation} 
where $v_{Z^{(j+1)}}$ and $v_{F^{(j+1)}}$ are defined as follows. 

Define $$p_j={\rm lcm}(v_j, p_{j-1})\in\mathbb{Z}_{>0}.$$ 
Then  $\varphi_j:\mathcal{U}^{(1,2)}_{p_j}\longrightarrow  \mathcal{U}^{(1,2)}_{p_j}$ given by $x\mapsto x$ and $y\mapsto y + \alpha_\mathfrak{i}x^{-u_j/v_j}$ is a well-defined decreasing automorphism of  $\mathcal{U}^{(1,2)}_{p_j}$  with respect to $F^{(j)}$. 
Let $F^{(j+1)}=\varphi_j(F^{(j)})$, $G^{(j+1)}=\varphi_j(G^{(j)})$, and $Z^{(j+1)}=\varphi_j(Z^{(j)})$. 
Let $v_{Z^{(j+1)}}$ be the lower vertex of the  edge of $N^0(Z^{(j+1)})$ with outward normal vector $w_j$. (We allow the edge to have length 0, in which case $v^{Z^{(j+1)}}=v^{Z^{(j)}}$.)   
Similarly, let  $v_{F^{(j+1)}}$ be the lower vertex of the  edge of $N^0(F^{(j+1)})$ with outward normal vector $w_j$.  (We allow the edge to have length 0, in which case $v_{F^{(j+1)}}=v_{F^{(j)}}$.)
Note that $\deg_y(v_{Z^{(j+1)}}) = n_\mathfrak{i}$,  $\deg_y(v_{F^{(j+1)}}) = m_\mathfrak{i}$. 
Also note that
$v_{F^{(j+1)}}=v_{F^{(j)}}$ if and only if $\mathfrak{u}_j=1$ (so $\mathfrak{i}=1$); 
$v_{Z^{(j+1)}}=v_{Z^{(j)}}$ if and only if $n_j=0$ for all $j\neq\mathfrak{i}$. 
(See Figure~\ref{one_root} for an example of the case where $v_{Z^{(j+1)}}=v_{Z^{(j)}}$ and $v_{F^{(j+1)}}=v_{F^{(j)}}$;
see Figure~\ref{two_roots} for an example of the case where $v_{Z^{(j+1)}}\neq v_{Z^{(j)}}$ and $v_{F^{(j+1)}}\neq v_{F^{(j)}}$.)

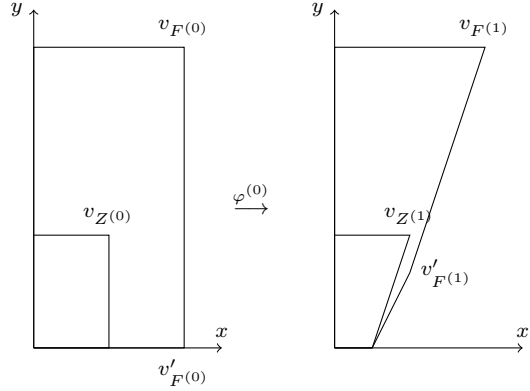
\begin{figure}
\begin{center}
\begin{tikzpicture}[scale=0.5]
\usetikzlibrary{patterns}
\draw (0,0)--(4,0)--(4,8)--(0,8)--(0,0);
\draw (0,0)--(2,0)--(2,3)--(0,3);
\draw[->] (0,0) -- (5,0)
node[above] {\tiny $x$};
\draw[->] (0,0) -- (0,9)
node[left] {\tiny $y$};
\draw (2,3) node[anchor=south] {\tiny $v_{Z^{(0)}}$};
\draw (4,8) node[anchor=south] {\tiny $v_{F^{(0)}}$};
\draw (4,0) node[anchor=north] {\tiny $v'_{F^{(0)}}$};
\draw (5,4) node[anchor=west]{\tiny $\overset{\varphi^{(0)}}{\longrightarrow}$};
\begin{scope}[shift={(8,0)}]
\draw (0,0)--(1,0)--(2,2)--(4,8)--(0,8)--(0,0);
\draw (0,0)--(1,0)--(2,3)--(0,3);
\draw (2,3) node[anchor=south] {\tiny $v_{Z^{(1)}}$};
\draw (4,8) node[anchor=south] {\tiny $v_{F^{(1)}}$};
\draw (2,2) node[anchor=west] {\tiny $v'_{F^{(1)}}$};
\draw[->] (0,0) -- (5,0)
node[above] {\tiny $x$};
\draw[->] (0,0) -- (0,9)
node[left] {\tiny $y$};
\end{scope}
\end{tikzpicture}
\end{center}
	\caption{If $Z_+^{(1,0)}=x^2(y-\alpha_{01})^3$ and  $F_+^{(1,0)}=x^4(y-\alpha_{01})^8$, then $v_{Z^{(0)}}=v_{Z^{(1)}}=(2,3)$ and  $v_{F^{(0)}}=v_{F^{(1)}}=(4,8)$ }
\label{one_root}
\end{figure}

\begin{figure}[h]
\begin{center}
\begin{tikzpicture}[scale=0.5]
\usetikzlibrary{patterns}
\draw (0,0)--(4,0)--(4,8)--(0,8)--(0,0);
\draw (0,0)--(2,0)--(2,3)--(0,3);
\draw (2,3) node[anchor=south] {\tiny $v_{Z^{(0)}}$};
\draw (4,8) node[anchor=south] {\tiny $v_{F^{(0)}}$};
\draw (4,0) node[anchor=north] {\tiny $v'_{F^{(0)}}$};
\draw[->] (0,0) -- (5,0)
node[above] {\tiny $x$};
\draw[->] (0,0) -- (0,9)
node[left] {\tiny $y$};
\draw(6,4) node {\tiny$\overset{\varphi^{(0)}}{\longrightarrow}\quad$};
\begin{scope}[shift={(8,0)}]
\draw (0,0)--(3,0)--(4,4)--(4,8)--(0,8)--(0,0);
\draw (0,0)--(1,0)--(2,1)--(2,3)--(0,3);
\draw (2,3) node[anchor=south] {\tiny $v_{Z^{(0)}}$};
\draw (4,8) node[anchor=south] {\tiny $v_{F^{(0)}}$};
\draw (2,1) node[anchor=east] {\tiny $v_{Z^{(1)}}$};
\draw (4,4) node[anchor=west] {\tiny $v_{F^{(1)}}$};
\draw (3.3,0) node[anchor=north] {\tiny $v'_{F^{(1)}}$};
\draw[->] (0,0) -- (5,0)
node[above] {\tiny $x$};
\draw[->] (0,0) -- (0,9)
node[left] {\tiny $y$};
\end{scope}
\end{tikzpicture}
\end{center}
	\caption{If $Z_+^{(1,0)}=x^2(y-\alpha_{01})(y-\alpha_{02})^2$ and  $F_+^{(1,0)}=x^4(y-\alpha_{01})^4 (y-\alpha_{02})^4$, then $(2,3)=v_{Z^{(0)}}\neq v_{Z^{(1)}}=(2,1)$ and  $(4,8)=v_{F^{(0)}}\neq v_{F^{(1)}}=(4,4)$ }
\label{two_roots}
\end{figure}
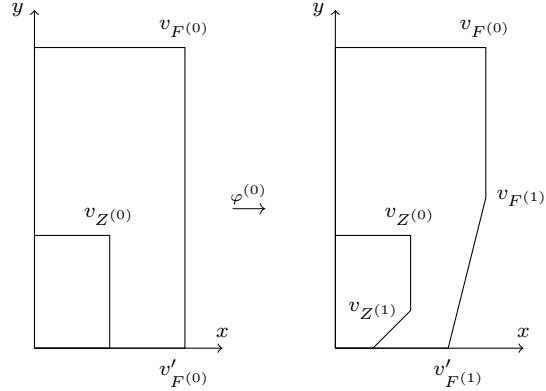

It is straightforward to check that $(F^{(j+1)}, G^{(j+1)}, Z^{(j+1)}, v_{Z^{(j+1)}}, v_{F^{(j+1)}})$ is in $\mathcal{T}$ as follows: 

\noindent (a) is obvious, because $\varphi_j$ is an automorphism. 

\noindent (b) is clear from the construction. 

\noindent (c) follows from \eqref{ineq70707} and \eqref{ineq77}.

\noindent (d)  
If $Z^{(j)}$ is already on the $x$-axis,  then $Z^{(j+1)}=Z^{(j)}$, the condition (d) trivially holds. So we can assume
$\deg_y(Z^{(j)})>0$.
Define $D_j$ to be the point on the $x$-axis that is colinear with $v_{Z^{(j)}}$ and $v_{F^{(j)}}$, and define  $D_{j+1}$ similarly. 
Let $E$ be the intersection of the segment $D_jv_{F^{(j+1)}}$ with ${\rm Line}_w(v_{Z^{(j)}})$. 
Condition ($\star_j$) implies that $v_{F^{(j+1)}}$ and $E$ are strictly on the right side of the line $D_jv_{F^{(j)}}$. Using \eqref{ineq70707} and the similarity of the triangles $D_jv_{Z^{(j)}}E$ and $D_jv_{F^{(j)}}v_{F^{(j+1)}}$, we have 
$$\frac{\text{deg}_y(v_{Z^{(j+1)}})}{\text{deg}_y(v_{F^{(j+1)}})}
\le 
\frac{\text{deg}_y(v_{Z^{(j)}})}{\text{deg}_y(v_{F^{(j)}})}
=\frac{|D_jv_{Z^{(j)}}|}{|D_jv_{F^{(j)}}|}
=\frac{|D_jE|}{|D_jv_{F^{(j+1)}}|}
=\frac{\text{deg}_yE}{\text{deg}_y(v_{F^{(j+1)}})}.
$$
So $\text{deg}_y(v_{Z^{(j+1)}})\le\text{deg}_yE$, which implies the condition (d) that $v_{Z^{(j+1)}}$ is in $\mathcal{L}_{v_{F^{(j+1)}}} \cup \{(x,0)\in \mathbb{R}^2 \, : \, x>0\}$. 
See Figure~\ref{Tdef4} for illustration.
So $(F^{(j+1)}, G^{(j+1)}, Z^{(j+1)}, v_{Z^{(j+1)}}, v_{F^{(j+1)}})$ is in $\mathcal{T}$.

\begin{figure}[h]
\begin{center}
\begin{tikzpicture}[scale=0.5]
\usetikzlibrary{patterns}
\draw (6,3)--(5,1);
\draw (12,8)--(10,4);
\draw (6,3) node {\huge .};
\draw (12,8) node {\huge .};
\draw (5,1) node {\huge .};
\draw (10,4) node {\huge .};
\draw (6,3) node[anchor=west] {\tiny $v_{Z^{(j)}}$};
\draw (12,8) node[anchor=south] {\tiny $v_{F^{(j)}}$};
\draw (5,1) node[anchor=west] {\tiny $v_{Z^{(j+1)}}$};
\draw (10,4) node[anchor=west] {\tiny $v_{F^{(j+1)}}$};
\draw[->] (0,0) -- (15,0)
node[above] {\tiny $x$};
\draw[->] (0,0) -- (0,9)
node[left] {\tiny $y$};
\draw (1,1) node {\huge .};
\draw (1,1) node[anchor=south] {\tiny $C=(\frac{a}{a+b},\frac{a}{a+b})$};
\draw (0,0) node[anchor=east] {\tiny $O=(0,0)$};
\draw (2.5,0) node[anchor=north] {\tiny $D_j$};
\draw (4,0) node[anchor=north] {\tiny $D_{j+1}$};
\draw[dotted] (1,1)--(12,8);
\draw[densely dotted] (12,8)--(6,3)--(6-6*3/5,0);
\draw[densely dotted] (10,4)--(5,1)--(5-5*1/3,0);
\draw [-stealth](11,6) -- (12,5.5);
\draw (12,5.5) node[anchor=west] {\tiny $w$};
\draw[dotted] (2.5, 0)--(10,4);
\draw (5.25,1.5) node {\huge .};
\draw (5,1.5) node[anchor=south] {\tiny $E$};
\end{tikzpicture}
\end{center}
	\caption{Illustration for the proof of (d). }
\label{Tdef4}
\end{figure}
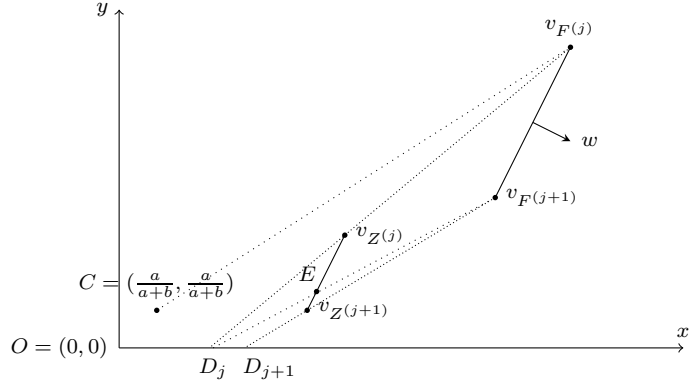

Next, we show that the above inductive process must stop in finitely many steps. We show this by contradiction and assume the process will not stop. Since $\deg_y(v_{F^{(j)}})\in\mathbb{Z}_{>0}$ and $\deg_y(v_{Z^{(j)}})\in\mathbb{Z}_{\ge0}$ for all $j$, there exists $j_0$ such that 
$v_{F^{(j)}}=v_{F^{(j_0)}}=(c,d)$ for all $j\ge j_0$. 
Then for each $j\ge j_0$, 
$(F^{(j+1)})_+^{w_{j+1}}=c_{j+1} x^{\text{deg}_{w_{j+1}}(F^{(j+1)})/v_{j+1}}  (x^{u_{j+1}/v_{j+1}} y-\alpha_{j+1,1})^d$ which must be in $\mathcal{U}^{(1,2)}_{p_j}$. 
Since $c_{j+1}$ and $\alpha_{j+1,1}$ are nonzero constants, $x^{\text{deg}_{w_{j+1}}(F^{(j+1)})/v_{j+1}}  (x^{u_{j+1}/v_{j+1}} y)^i$ (for $0\le i\le d$) must be in  $\mathcal{U}^{(1,2)}_{p_j}$.
It follows that $x^{u_{j+1}/v_{j+1}} y$ must be in $\mathcal{U}^{(1,2)}_{p_j}$. 
This implies $v_{j+1}|p_j$, thus $p_{j+1}={\rm lcm}(v_{j+1},p_j)=p_j$. So $p_j=p_{j_0}$ for all $j\ge j_0$. But then we obtain a contradiction that infinitely many distinct points $v'_{F^{(j)}}$ ($j\ge j_0$) are in the finite set $(\frac{1}{p_{j_0}}\mathbb{Z}\times\mathbb{Z})\cap ([0,m]\times[0,n])$.

\noindent Step 2. Assume the inductive process stops at $j$. So ($\star_{j}$)  fails. For simplicity, denote $w=w_{j}$. By induction it is easy to see that $v_{G^{(j')}} =\frac{b}{a}v_{F^{(j')}}$ for $0\le j'\le j$. So $$\deg_w(G^{(j')})=\frac{b}{a}\deg_w(F^{(j')})\textrm{ for } 0\le j'\le j.$$ 
Now we consider two cases separately.

\noindent Case 1:  $v(Z^{(j)})$ lies on ${\rm Line}_{w}(v_{F^{(j)}})$. 

By \eqref{ineq77}, $\deg_y v(Z^{(j)})<\deg_y v(F^{(j)})$, so  
$(F^{(j)})_+^w = (F^{(j)}-Z^{(j)})_+^w + (Z^{(j)})_+^w$, where $(F^{(j)}-Z^{(j)})_+^w=((Q^{(j)})_+^w)^a$ is the $a$-th power of  $(Q^{(j)})_+^w\in \mathcal{U}^{(1,2)}$. Since $v_{Z^{(j)}}$ is in $\mathcal{L}_{v_{F^{(j)}}}$, we have 
$$\deg_w( F^{(j)} ) > \deg_w(C).$$
If $[(F^{(j)})_+^w, (G^{(j)})_+^w]\neq 0$, then 
$$
\aligned
\deg_w( [ F^{(j)}, G^{(j)} ] ) &= \deg_w( F^{(j)} ) + \deg_w( G^{(j)} ) -\deg_w((1,1))\\
&=\frac{a+b}{a}\deg_w( F^{(j)} ) -\deg_w((1,1)) \\
&>\frac{a+b}{a}\deg_w( C) -\deg_w((1,1))
=\deg_w(\frac{a+b}{a}C) -\deg_w((1,1))
=0,
\endaligned
$$
which contradicts to $[F^{(j)}, G^{(j)}]\in\mathbb{C}$. So $[(F^{(j)})_+^w, (G^{(j)})_+^w]=0$, which in turn implies that  $(F^{(j)})_+^w$ is the $a$-th power of an element in $\mathcal{U}^{(1,2)}$.  
Let $v_{F^{(j)}}=(c,d)$.
Let $\mathcal{R}'=\mathbb{C}$,  let $A(s)$ and $B(s)$ be polynomials uniquely determined by 
$$
(Q^{(j)})_+^w = x^{c/a}y^{d/a}A(x^{-u_{j}/v_{j}}),\quad
(Z^{(j)})_+^w = x^{c}y^{d}B(x^{-u_{j}/v_{j}}).
$$
Then  $\deg A(s)\le d/a$, $\deg B(s)\le d$, ${\rm lowdeg}B(s)=\deg_y(v_{F^{(j)}})-\deg_y(v_{Z^{(j)}})>d/a$  by \eqref{ineq77}.
Thus $\deg B(s) < a({\rm lowdeg\,} B(s))$, and $\deg A(s)< {\rm lowdeg\,} B(s)$, and we can apply Lemma~\ref{I3I2U5U4old} to conclude that $B(s)=0$, which contradicts the fact that $(Z^{(j)})_+^w\neq0$. 

\noindent Case 2:  $v(Z^{(j)})$ lies strictly on the right side of ${\rm Line}_{w}(v_{F^{(j)}})$. 

In this case, 
consider the line passing through $v_{F^{(j)}}$ and  $v_{Z^{(j)}}$ and denote its rightward normal vector by $w'$. 
Denote $v_{F^{(j)}}=(c,d)$, $v_{Z^{(j)}}=(c',d')$.  Then
$(F^{(j)})_+^{w'} = (F^{(j)}-Z^{(j)})_+^{w'} + (Z^{(j)})_+^{w'}$  implies
$c_jx^cy^d-c'_jx^{c'}y^{d'}=(F^{(j)}-Z^{(j)})_+^{w'}=((Q^{(j)})_+^{w'})^a$ is the $a$-th power of  $(Q^{(j)})_+^{w'}\in \mathcal{U}^{(1,2)}$, which is impossible since $c_j, c_j'\neq0$.
\end{proof}

\begin{example}
Proposition~\ref{no12v2} is equivalent to saying that if the EN vertex of $N^0(Z)$ is in $$\mathcal{L}_{(m,n)} \cup \{(x,0)\in \mathbb{R}^2 \, : \, x>0\},$$ then $[F,G]$ is not a constant. We illustrate this by the following example.
Let $$\aligned 
&(a,b,m,n)=(2,3,2,4),\\
&F=F^{(0)}=(x+1)^2(y+1)^4 + (x+1)(y+1) \in {R}_{2,4},\\
&G=G^{(0)}=(x+1)^3(y+1)^6 + \frac{3}{2}(x+1)^2 (y+1)^3 + \frac{3}{8}x \in {R}_{3,6}.\\
\endaligned
$$
Then $Q=(x+1)(y+1)^2$ and $Z=Z^{(0)}=(x+1)(y+1)$. So the EN vertex of $N^0(Z)$ is $(1,1)$, which is in $\mathcal{L}_{(m,n)}$. Applying the decreasing automorphism $\varphi^{(0)}$ given by $x\mapsto x$ and $y\mapsto y-1$, we have 
$F^{(1)}=\varphi^{(0)}(F)=(x+1)^2 y^4 + (x+1)y$, 
$Z^{(1)}=\varphi^{(0)}(Z)=(x+1)y$.
Then $v_{F^{(0)}}=(2,4)$, $v_{F^{(1)}}=(1,1)$, 
 $v_{Z^{(0)}}=v_{Z^{(1)}}=(1,1)$,  
 $w_0=(1,0)$, $w_1=(3,-1)$,
 ($\star_0$) holds and ($\star_1$) fails. 
Then 
$$\deg_{w_1}( [ F^{(1)}, G^{(1)} ] )= \deg_{w_1}( F^{(1)} ) + \deg_{w_1}( G^{(1)} ) -\deg_{w_1}((1,1))= 2 + 3 - 2 =3.$$
Indeed we have $[F,G]= [ F^{(1)}, G^{(1)} ] =-\frac{3}{8}(x+1)$, whose $w_1$-degree is $3$. 
\end{example}

\begin{remark}
If the EN vertex of $N^0(Z)$ is in $$\{(x,y)\in \mathbb{R}^2 \, : \, 0\le y< \frac{a-1}{a}n\quad\text{ and }\quad\mathfrak{m}(x-\frac{a}{a+b}) +\frac{a}{a+b}\le y\le \frac{n}{m}x \},$$then the above induction process does not give an immediate contradiction. Here is an example (see Figure \ref{big_pic}). 
Let $(a,b,m,n)=(2,3,48,528)$. 
Here $(Q^{(1)})_+^{w_1}=x^2(xy^{12}-1)^{22}$ for $w_1=(12,-1)$, and 
 $\varphi^{(1)}:\mathbb{C}[x^{\pm 1/12},y] \longrightarrow \mathbb{C}[x^{\pm 1/12},y]$ is given by $x\mapsto x$ and $y \mapsto y + x^{-1/12}$. 
One of the edges of $N^0(Z^{(2)})$ joins $(1/3,0)$ and $(23/6,21)$, and one of the edges of $N^0(F^{(2)})$ joins $(1/3,0)$ and $(46/6,44)$. Both edges contain the point $(2/5,2/5)$, and they are normal to the direction $w_2=(6,-1)$. Then 
$$\aligned 
\deg_{w_2}( [ F^{(2)}, G^{(2)} ] )&= \deg_{w_2}( F^{(2)} ) + \deg_{w_2}( G^{(2)} ) -\deg_{w_2}((1,1))\\&=\deg_{w_2}( (2/5,2/5) ) + \deg_{w_2}( (3/5,3/5) ) -\deg_{w_2}((1,1))=0,\endaligned$$
which does not lead to an immediate contradiction.
\end{remark}

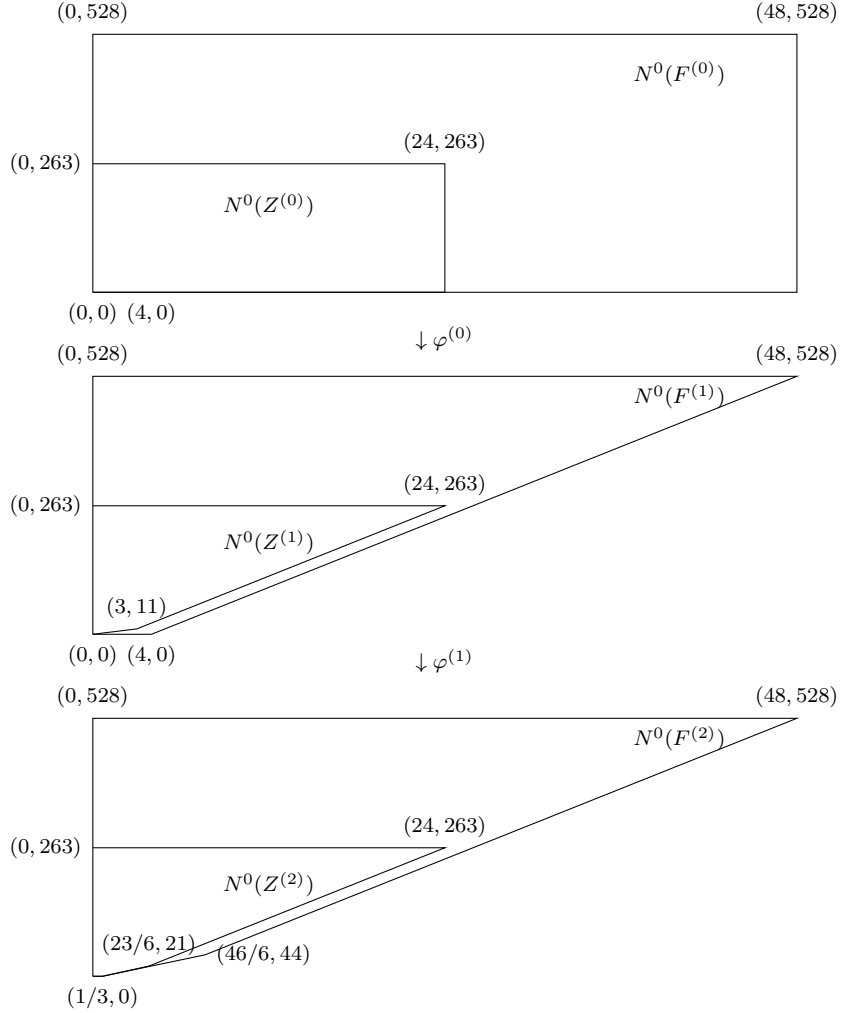
\begin{figure}[h]
\begin{tikzpicture}[scale=0.13]
\usetikzlibrary{patterns}
\draw (0,0)--(24*3,0)--(24*3, 264/10)--(0, 264/10)--(0,0);
\draw (0,0) node[anchor=north] {\tiny $(0,0)$};
\draw (2*3,0) node[anchor=north] {\tiny $(4,0)$};
\draw (24*3, 264/10) node[anchor=south] {\tiny $(48,528)$};
\draw (0, 264/10) node[anchor=south] {\tiny $(0,528)$};
\draw (0,263/20)--(24*3/2, 263/20)--(24*3/2,0)--(0,0);
\draw (0, 263/20) node[anchor=east] {\tiny $(0,263)$};
\draw (24*3/2, 263/20) node[anchor=south] {\tiny $(24,263)$};
\draw (24*3/4, 263/40) node[anchor=south] {\tiny $N^0(Z^{(0)})$};
\draw (20*3, 200/10) node[anchor=south] {\tiny $N^0(F^{(0)})$};
\begin{scope}[shift={(0,-35)}]
\usetikzlibrary{patterns}
\draw (24*3/2, 28) node[anchor=south] {\tiny $\downarrow\varphi^{(0)}$};
\draw (0,0)--(2*3,0)--(23/6*3,22/10)--(24*3, 264/10)--(0, 264/10)--(0,0);
\draw (0,0) node[anchor=north] {\tiny $(0,0)$};
\draw (2*3,0) node[anchor=north] {\tiny $(4,0)$};
\draw (24*3, 264/10) node[anchor=south] {\tiny $(48,528)$};
\draw (0, 264/10) node[anchor=south] {\tiny $(0,528)$};
\draw (0,263/20)--(24*3/2, 263/20)--(23/6*3/2,21/20)--(3*3/2,11/20)--(0,0);
\draw (0, 263/20) node[anchor=east] {\tiny $(0,263)$};
\draw (24*3/2, 263/20) node[anchor=south] {\tiny $(24,263)$};
\draw (3*3/2,11/20) node[anchor=south] {\tiny $(3,11)$};
\draw (24*3/4, 283/40) node[anchor=south] {\tiny $N^0(Z^{(1)})$};
\draw (20*3, 220/10) node[anchor=south] {\tiny $N^0(F^{(1)})$};
\end{scope}
\begin{scope}[shift={(0,-70)}]
\draw (24*3/2, 30) node[anchor=south] {\tiny $\downarrow\varphi^{(1)}$};
\draw (0,0)--(1/3*3,0)--(23/6*3,22/10)--(24*3, 264/10)--(0, 264/10)--(0,0);
\draw (1/3*3,0) node[anchor=north] {\tiny $(1/3,0)$};
\draw (24*3, 264/10) node[anchor=south] {\tiny $(48,528)$};
\draw (0, 264/10) node[anchor=south] {\tiny $(0,528)$};
\draw (0,263/20)--(24*3/2, 263/20)--(23/6*3/2,21/20)--(1/3*3,0/20)--(0,0);
\draw (0, 263/20) node[anchor=east] {\tiny $(0,263)$};
\draw (24*3/2, 263/20) node[anchor=south] {\tiny $(24,263)$};
\draw (23/6*3/2,21/20) node[anchor=south] {\tiny $(23/6,21)$};
\draw (23/6*3,22/10) node[anchor=west] {\tiny $(46/6,44)$};
\draw (24*3/4, 283/40) node[anchor=south] {\tiny $N^0(Z^{(2)})$};
\draw (20*3, 220/10) node[anchor=south] {\tiny $N^0(F^{(2)})$};
\end{scope}
\end{tikzpicture}
\caption{An example for $(a,b,m,n)=(2,3,48,528)$}
\label{big_pic}
\end{figure}

Similar to Proposition \ref{no12v2}, we have the following:
\begin{proposition}\label{no12v2_opposite}
Let $(a, b, m, n) \in \mathcal{Q}$ and assume $m\ge  n$.
Suppose that $\ff,\GG\in \mathbb{C}[x,y]$ satisfy the following:

\noindent\emph{(1)} $[\ff,\GG]  \in  \mathbb{C}$;

\noindent\emph{(2)} $\ff\in R_{m,n}$ and $\GG\in R_{bm/a,bn/a}$.

Let $Z$ be the innermost polynomial of $F$. Then the EN vertex of $N^0(Z)$ is  in $$\{(x,y)\in \mathbb{R}_{\ge0}^2 \, : \, y\ge \frac{n}{m}x\}.$$ 
\end{proposition}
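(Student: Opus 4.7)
The plan is to mirror the proof of Proposition \ref{no12v2} with the roles of $x$ and $y$ swapped. Concretely, I would work in the ring $\mathcal{U}^{(1,4)} = \bigcup_p \mathbb{C}[x, y^{\pm 1/p}]$ and process the top edges of $N^0(F^{(j)})$ using directions $w = (-u, v)$ with $u \ge 0, v > 0$ (outward normals of top-left edges), together with decreasing automorphisms of the form $\phi(x) = x + \alpha y^{-u/v}$, $\phi(y) = y$. This is the literal $x \leftrightarrow y$ mirror of the previous machinery. Note that a direct swap reduction to Proposition \ref{no12v2} does not suffice: applying the involution $(x,y) \leftrightarrow (y,x)$ produces a constraint on the NE vertex of $N^0(Z)$ rather than its EN vertex, which is what is claimed here. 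Hence the argument must be re-run from scratch in the mirror setting.

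Suppose for contradiction that the EN vertex $v_Z$ of $N^0(Z)$ lies strictly below the diagonal $y = (n/m)x$. I would set up an analog $\mathcal{T}'$ of the tuple set $\mathcal{T}$, consisting of $(f, g, z, v_z, v_f)$ with $f, g, z \in \mathcal{U}^{(1,4)}$ and $v_z, v_f$ the EN vertices of $N^0(z)$ and $N^0(f)$, subject to: (a) unchanged; (b) unchanged; (c$'$) $\deg_x(v_z) < \tfrac{a-1}{a}\deg_x(v_f)$; and (d$'$) $v_z$ lies strictly below the line from the origin through $v_f$. The initial tuple is $(F, G, Z, v_Z, (m,n))$. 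I would inductively construct $(F^{(j+1)}, G^{(j+1)}, Z^{(j+1)}, v_{Z^{(j+1)}}, v_{F^{(j+1)}}) \in \mathcal{T}'$ by applying decreasing automorphisms at the selected top edge of $N^0(F^{(j)})$, verifying preservation of (a)--(d$'$) by a similar-triangles argument analogous to the one in the original proof, now anchored at the origin rather than at $C$. Finiteness follows from the same lcm-denominator bounded-set argument.

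At termination, condition $(\star_j)$ fails, and we split into the two cases as before. In Case 1, where $v(Z^{(j)}) \in \mathrm{Line}_{w_j}(v_{F^{(j)}})$, the leading form decomposes as $(F^{(j)})_+^{w_j} = ((Q^{(j)})_+^{w_j})^a + (Z^{(j)})_+^{w_j}$, and since $[F^{(j)}, G^{(j)}] \in \mathbb{C}$, degree counting in the direction $w_j$ forces $(F^{(j)})_+^{w_j}$ to be an $a$-th power. Then Lemma \ref{I3I2U5U4old}, applied with $s = y^{1/p_j}$ instead of $s = x^{1/p_j}$, forces $(Z^{(j)})_+^{w_j} = 0$, a contradiction. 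In Case 2, where $v(Z^{(j)})$ lies strictly above $\mathrm{Line}_{w_j}(v_{F^{(j)}})$, one gets the equation $c_j x^c y^d - c'_j x^{c'} y^{d'} = ((Q^{(j)})_+^{w_j})^a$ with $c_j, c'_j \neq 0$, which is impossible for $a \ge 2$.

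The main obstacle is that in the original proof the wedge $\mathcal{L}_{v_f}$ is anchored at $C$, and the key positivity used in Case 1 is $\deg_{w_j}(v_{F^{(j)}}) > \deg_{w_j}(C)$, which ensures $\deg_{w_j}([F^{(j)}, G^{(j)}]) > 0$. For the diagonal statement of Proposition \ref{no12v2_opposite}, the natural anchor is the origin instead, so one must establish the analogous degree inequality using only the hypothesis that $v_{F^{(j)}}$ lies above its own origin-anchored line --- equivalently, that $\deg_{w_j}(v_{F^{(j)}}) > 0$. This is weaker than the original bound, but it is still sufficient in the present wide case $m \ge n$ because the assumption that $v_Z$ sits strictly below the line $y = (n/m)x$ propagates, by (d$'$) and similarity, to a strict inequality that can be exploited at the terminal step. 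Verifying that (d$'$) is preserved under the inductive step and that the terminal degree count closes is the only non-routine part; the rest is mirror-copying the case analysis from Proposition \ref{no12v2}.
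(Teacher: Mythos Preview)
Your proposal takes a different route from the paper and contains a genuine gap. The paper does \emph{not} pass to $\mathcal{U}^{(1,4)}$ or swap the roles of $x$ and $y$: it keeps the machinery of Proposition~\ref{no12v2} intact---working in $\mathcal{U}^{(1,2)}$, processing right-side edges with outward normals $(v,-u)$, applying $y\mapsto y+\alpha x^{-u/v}$, and tracking the EN vertex via the unchanged condition (c) on $\deg_y$---and modifies only condition (d) to $v_z\in\{0\le y<\tfrac{n}{m}x\}$. The point you miss in Case~1 is that one still has $\deg_w(F^{(j)})>\deg_w(C)$, because $C=(\tfrac{a}{a+b},\tfrac{a}{a+b})$ remains on the left of ${\rm Line}_w(v_{F^{(j)}})$; this, not the weaker origin-anchored inequality $\deg_w(v_{F^{(j)}})>0$ you propose, is what makes the Jacobian degree count go through (your inequality only gives $\tfrac{a+b}{a}\deg_w(F^{(j)})>0$, which need not exceed $\deg_w((1,1))$).

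Your mirror setup has two concrete problems. First, top-left edges together with automorphisms $x\mapsto x+\alpha y^{-u/v}$ naturally evolve the NE vertex, not the EN vertex; you correctly observe that a swap of variables turns the EN claim into an NE claim, but then you set up exactly the swapped apparatus while asserting that $v_z,v_f$ are EN vertices. Second, your condition (c$'$) $\deg_x(v_z)<\tfrac{a-1}{a}\deg_x(v_f)$ can already fail for the initial tuple: nothing forbids $Z$ from containing a monomial $x^m y^j$ with $j<\tfrac{(a-1)n}{a}$ (such a point lies outside $\mathcal{N}''$ and below the diagonal), in which case the EN vertex of $N^0(Z)$ has $x$-coordinate $m$. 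By contrast, the paper's unchanged condition (c) does hold at the start: if the EN vertex $(m',n')$ lies below the diagonal and had $n'\ge\tfrac{(a-1)n}{a}$, then $m'>(m/n)n'\ge\tfrac{(a-1)m}{a}$ would force $(m',n')\in\mathcal{N}''$, contradicting ${\rm supp}(Z)\cap\mathcal{N}''=\emptyset$.
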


\begin{proof}
The proof is similar to the one of Proposition \ref{no12v2}, so we only point out the difference.
The condition (d) in the definition of $\mathcal{T}$ becomes

\noindent{(d)} $v_z\in \{(x,y)\in \mathbb{R}^2 \, : \, 0\le y< \frac{n}{m}x\}$.

Aiming at contradiction, suppose that the EN vertex of $N^0(Z)$ is in 
$$\{(x,y)\in \mathbb{R}^2 \, : \, 0\le y< \frac{n}{m}x\}.$$ 

In Step 1, to check that $(F^{(j+1)}, G^{(j+1)}, Z^{(j+1)}, v_{Z^{(j+1)}}, v_{F^{(j+1)}})$ satisfies (d):  assume that 
$\deg_y(Z^{(j)})>0$ and construct $D_j$, $D_{j+1}$, $E$ as before.
We can still conclude that
$v_{F^{(j+1)}}$ and $E$ are strictly on the right side of the line $D_jv_{F^{(j)}}$,
and
$\text{deg}_y(v_{Z^{(j+1)}})\le\text{deg}_yE$, which implies the condition (d) that 
$v_{Z^{(j+1)}}$  is below the line $Ov_{F^{(j)}}$, and thus is in $\{(x,y)\in \mathbb{R}^2 \, : \, 0\le y< \frac{n}{m}x\}$.

In Step 2 Case 1:  $v(Z^{(j)})$ lies on ${\rm Line}_{w}(v_{F^{(j)}})$. Since $C$ is on the left side of ${\rm Line}_{w}(v_{F^{(j)}})$,  we have 
$$\deg_w( F^{(j)} ) > \deg_w(C).$$
The rest of the proof is almost identical.
\end{proof}

\begin{proposition}\label{no14v2}
Under the same assumption of Proposition \ref{no12v2}, the NE vertex of $N^0(Z)$ must be in $\{(x,y)\in \mathbb{R}_{\ge0}^2 \, : \, x\ge \frac{m}{n}y\}$.
\end{proposition}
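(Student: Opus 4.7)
The plan is to deduce \Cref{no14v2} from \Cref{no12v2_opposite} via the involution $\sigma \in \Aut(\mathcal{R})$ swapping the two variables, $\sigma(f)(x,y) = f(y,x)$. Because $\mathcal{Q}$ is symmetric in its last two entries, $(a, b, n, m) \in \mathcal{Q}$. Setting $\tilde{F} = \sigma(F)$ and $\tilde{G} = \sigma(G)$, one has $\tilde{F} \in R_{n, m}$, $\tilde{G} \in R_{bn/a, bm/a}$, and $[\tilde{F}, \tilde{G}] = -[F, G] \in \mathbb{C}$. Since we assumed $m \le n$, the swapped parameters satisfy $n \ge m$, so that $(\tilde{F}, \tilde{G})$ with parameters $(a, b, n, m)$ fulfills the hypotheses of \Cref{no12v2_opposite}.

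The key technical step will be to show that the construction of the innermost polynomial is $\sigma$-equivariant, i.e.\ that the innermost polynomial $\tilde{Z}$ of $\tilde{F}$ equals $\sigma(Z)$. I would carry this out in three stages. First, $\sigma$ reflects both $N^0(F)$ and $\mathcal{N}''$ across the diagonal and interchanges $R_{m/a, n/a}$ with $R_{n/a, m/a}$, so the uniqueness assertion in \Cref{lem:c=p^2 most generalized} forces $\sigma(Q) = \tilde{Q}$. Second, $\sigma$ carries principal polynomials to principal polynomials and intertwines $\mathcal{W}(Q)$ with $\mathcal{W}(\tilde{Q})$; combined with the fact that the normalization $[x^{m/(a\delta)} y^{n/(a\delta)}] W_F = 1$ transports to the correct normalization $[x^{n/(a\delta)} y^{m/(a\delta)}] W_{\tilde{F}} = 1$, this yields $\sigma(W_F) = W_{\tilde{F}}$ with the same $\delta$. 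Third, the scalars $e_j$ in \eqref{eq:Z} are uniquely characterized by the vanishing in $F - Q^a - \sum_j e_j W_F^j$ of the coefficients of $x^{jm/(a\delta)} y^{jn/(a\delta)}$; under $\sigma$ these conditions become the defining conditions for the analogous $\tilde{e}_j$ of $\tilde{F}$, so $\tilde{e}_j = e_j$ and thus $\tilde{Z} = \sigma(Z)$.

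With equivariance in hand the proof will finish quickly. If $Z = 0$ the conclusion is vacuous, so assume $Z \ne 0$ and let $(m', n')$ denote the NE vertex of $N^0(Z)$. Reflecting across the diagonal, $(n', m')$ is the EN vertex of $N^0(\tilde{Z})$. Applying \Cref{no12v2_opposite} to $(\tilde{F}, \tilde{G}, \tilde{Z})$ with parameters $(a, b, n, m)$ gives $m' \ge (m/n) n'$, i.e.\ $(m', n') \in \{(x, y) \in \mathbb{R}^2_{\ge 0} : x \ge (m/n) y\}$, which is exactly the desired conclusion.

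The main obstacle is the $\sigma$-equivariance established in the second paragraph. Each individual identification is forced by a uniqueness statement already in the paper (for $Q$ in \Cref{lem:c=p^2 most generalized}, for the principal polynomial in $\mathcal{W}(Q)$ up to roots of unity via \Cref{unique_principal}, and for the $e_j$), but one must carefully track that the ordering $z_1 > \cdots > z_{n'}$ used in the construction of $Q$, the rectangular region $\mathcal{N}''$, and the chosen normalizations all behave correctly under reflection across the diagonal. Conceptually none of these verifications are deep; together they constitute the only substantive content of the argument, after which the geometric conclusion is immediate.
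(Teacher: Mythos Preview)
Your approach is correct and is essentially the same idea as the paper's own proof, which simply says to ``Consider $\mathcal{U}^{(1,4)}$ instead of $\mathcal{U}^{(1,2)}$ and swap the roles of $x$ and $y$ in Proposition~\ref{no12v2_opposite}.'' You make the symmetry explicit by conjugating with the involution $\sigma$ and carefully verifying that the pre-generator $Q$, the $F$-generator $W_F$, and the scalars $e_j$ (hence the innermost polynomial $Z$) are $\sigma$-equivariant via their respective uniqueness characterizations; the paper leaves this verification implicit in the phrase ``swap the roles of $x$ and $y$,'' but the content is the same.
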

\begin{proof}
Consider $\mathcal{U}^{(1,4)}$  instead of $\mathcal{U}^{(1,2)}$ and swap the roles of $x$ and $y$ in Proposition \ref{no12v2_opposite}.
\end{proof}

\begin{proof}[Proof of Thoerem~\ref{main_thm} ]
We claim that the EN vertex $P=(i,j)$ and the NE vertex $P'=(i',j')$ of $N^0(Z)$ coincide:
Indeed, if $P\neq P'$, then $i> i'$ and $j< j'$. Since both $P$ and $P'$ lie in the narrow region described in Remark \ref{remark:narrow region}, the point $P''=(i,j')$ also lies in that region, but then $P$ and $P''$ are distinct lattice points with the same $y$-coordinate, contradicting Remark \ref{remark:narrow region}.

The theorem then follows from Propositions~\ref{no12v2} and~\ref{no14v2}.
\end{proof}

\section{Conjecture~\ref{main_conj5}}
In this section we introduce Conjecture~\ref{main_conj5}, which implies the Jacobian conjecture.
\begin{definition}\label{def:T}
Let $T_{m,n,a}$ be the set of polynomials $f\in\mathcal{R}$ such that 

\noindent $\bullet$ $(m,n)\in N^0(f) \subseteq \Rect_{m,n}$;

\noindent $\bullet$ $f_n^{(0,1)}=x^m y^n$;

\noindent $\bullet$ $f_{n-1}^{(0,1)}=0$;

\noindent $\bullet$ and $z_s^{(0,1)}$ is a monomial, where $z$ is the inner polynomial of the pair $(f,a)$ and $s=\deg_y(z)$. Moreover $\text{supp}(z_s^{(0,1)})$ lies in $\mathfrak{R}$. 
\end{definition}

\begin{proposition}\label{prop:E}
Suppose that the Jacobian conjecture fails. Then there is a pair of polynomials $F,G\in\mathcal{R}$ such that

\noindent\emph{(1)}  $[F,G]\in \mathbb{C}\setminus\{0\}$;

\noindent\emph{(2)}  $F\in T_{m,n,a}$ and $G\in T_{bm/a,bn/a,b}$.

Equivalently, if there is no such pair, then the Jacobian conjecture is true.
\end{proposition}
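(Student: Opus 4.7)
My plan is to argue the contrapositive: starting from a putative failure of the Jacobian conjecture, construct a pair $(F,G)$ satisfying (1) and (2) by applying carefully chosen automorphisms to a counterexample. First, by Theorem~\ref{thm:equiv}, the failure produces a Jacobian pair $(f,g)$ with $\deg f : \deg g = a : b$, coprime with $2\le a<b$. Combining Theorem~\ref{thm:BBR}, Corollary~\ref{cor:trap}, and Proposition~\ref{prop:similar}, after an automorphism and rescaling we obtain $F\in R_{m,n}$ and $G\in R_{bm/a, bn/a}$ with $a\mid m$, $a\mid n$, and $[F,G]\in\mathbb{C}\setminus\{0\}$. This immediately establishes (1) and the inclusion $(m,n)\in N^0(F)\subseteq \Rect_{m,n}$ (together with the analogue for $G$).

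Next I would establish $F_n^{(0,1)} = x^m y^n$. Set $P(x):=[y^n]F$ and $C(x):=[y^{bn/a}]G$. The coefficient of $y^{n+bn/a-1}$ in $[F,G]$ is $(bn/a)P'C - nPC'$, which must vanish since $[F,G]\in\mathbb{C}$. This gives $C'/C = (b/a)P'/P$, hence $C = \lambda P^{b/a}$ for some $\lambda\in\mathbb{C}$; combined with Lemma~\ref{lem:c=p^2 most generalized}, $P = R^a$ and $C = \lambda R^b$ for a monic polynomial $R(x)$ of degree $m/a$. The goal $P(x) = x^m$ is equivalent to $R(x) = x^{m/a}$. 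I would argue by minimality: among all counterexamples with the given $(a,b,m,n)$, choose one for which $R(x)$ has the fewest distinct roots. If $R(x)$ has more than one distinct root, applying a decreasing automorphism in the direction $(0,1)$ based on one of these roots (in the spirit of the proof of Proposition~\ref{no12v2}) would strictly reduce the Newton polygon, contradicting minimality. Thus $R(x) = (x-c)^{m/a}$ for some $c$, and $x \mapsto x+c$ yields $F_n^{(0,1)} = x^m y^n$ and $G_{bn/a}^{(0,1)} = \lambda x^{bm/a} y^{bn/a}$.

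For $F_{n-1}^{(0,1)} = 0$, I examine the coefficient of $y^{n+bn/a-2}$ in $[F,G]$ using $P = x^m$ and $C = x^{bm/a}$ (after further rescaling $\lambda$ to $1$). The vanishing of this coefficient produces a linear algebraic relation between $[y^{n-1}]F$ and $[y^{bn/a-1}]G$ that, combined with the degree bounds from $N^0(F)\subseteq\Rect_{m,n}$ and $N^0(G)\subseteq\Rect_{bm/a,bn/a}$, should force $[y^{n-1}]F = \alpha x^m$ for some constant $\alpha\in\mathbb{C}$. The constant Tschirnhausen substitution $y\mapsto y-\alpha/n$ then kills $F_{n-1}^{(0,1)}$ without disturbing either the Newton polygon or the leading form already established; a simultaneous adjustment handles $G_{bn/a-1}^{(0,1)}$.

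Finally, for the inner polynomial condition, note that $\zeta(F)\ne 0$: otherwise $F = Q^a = \beta(W_F)^a$ would be a nontrivial polynomial of degree $\ge 2$ in the principal polynomial $W_F$, and Lemma~\ref{FG00} would yield $[F,G]=0$, contradicting (1). By Corollary~\ref{main_cor}, the NE vertex $(m',n')$ of $N^0(\zeta(F))$ lies in $\mathfrak{R}$ and $N^0(\zeta(F))\subseteq\Rect_{m',n'}$, and by Remark~\ref{remark:narrow region}, $(m',n')$ is the unique lattice point of $\mathfrak{R}$ at height $n'$. To force the $y^{n'}$-row of $\zeta(F)$ to consist of the single monomial at $(m',n')$, I would apply one further descent argument of the same flavor as in the proof of Proposition~\ref{no12v2}, ruling out extra monomials at height $n'$ (necessarily with $x$-coordinate strictly less than $m'$ and thus outside $\mathfrak{R}$) by a decreasing automorphism that would contradict minimality. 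The principal obstacle is to orchestrate these successive minimality and descent arguments compatibly, so that the reductions for $F_n^{(0,1)}$, $F_{n-1}^{(0,1)}$, and the top row of $\zeta(F)$ can all be achieved at a single minimal counterexample without any one step undoing the previous ones.
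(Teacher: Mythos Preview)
Your argument has a genuine gap at the step establishing $F_n^{(0,1)}=x^my^n$. You propose to choose, among counterexamples with the given $(a,b,m,n)$, one for which $R(x)$ has the fewest distinct roots, and then to reach a contradiction by ``strictly reducing the Newton polygon'' via a decreasing automorphism. But (i) these are two different minimality notions, so shrinking the Newton polygon does not contradict minimality of the number of roots of $R$; (ii) the decreasing automorphisms of Proposition~\ref{no12v2} send $y\mapsto y+\alpha x^{-u/v}$ with $v>0$ and land in $\mathcal{U}^{(1,2)}$, not in $\mathcal{R}$, so they do not manufacture another polynomial counterexample (and the ``direction $(0,1)$'' case is outside that framework altogether); and (iii) any genuine polynomial shear that kills a root of $R$ will change $(m,n)$, so the phrase ``with the given $(a,b,m,n)$'' no longer applies. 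In effect you are trying to re-derive in one line the well-known theorem that a counterexample can be brought, by an automorphism of $\mathcal{R}$, to satisfy $N^0(F)\subset\{x\le m,\ y\le x-m+n\}$ with the diagonal boundary met only at $(m,n)$; this is a substantial result with its own inductive proof, not a quick minimality remark.

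The paper's proof simply quotes that result (the long list of references in its first sentence). Once $N^0(F)$ sits inside this trapezoid, the rows $y=n$ and $y=n-1$ each meet it only at $x=m$, so $F_n^{(0,1)}=x^my^n$ and $F_{n-1}^{(0,1)}=\alpha\, x^my^{n-1}$ are automatic; the translation $y\mapsto y-\alpha/n$ then kills the latter exactly as you say, and preserves the trapezoid. The inner-polynomial condition is then attributed directly to Theorem~\ref{main_thm}. Thus your careful attempt to derive each bullet of Definition~\ref{def:T} from the rectangular setup of Conjecture~\ref{Jac_conj2} is more laborious than necessary: starting from the sharper trapezoidal shape makes the first three bullets immediate and removes the compatibility problem between successive reductions that you flag at the end.
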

\begin{proof}
It is well known \cite{A, ApOn, CN, H, L, M, MW, Moh, Na1, Nagata, NN1, NN2, Ok, Re, GGV} that if $F',G'\in\mathcal{R}$ satisfy $[F',G']\in \mathbb{C}\setminus\{0\}$ then there exists an automorphism $\psi$ and a lattice point $(m,n)$ such that $N^0(\psi(F'))\cap \{(x,y)\in \mathbb{R}^2 \, : \,  y=x-m+n \}=\{(m,n)\}$ and $N^0(\psi(F'))\cap \{(x,y)\in \mathbb{R}^2 \, : \,  x>m \}=\emptyset$. Then $(\psi(F'))_{n-1}^{(0,1)}$ is a monomial. More precisely, $(\psi(F'))_{n-1}^{(0,1)}=\alpha x^m y^{n-1}$ for some $\alpha\in\mathbb{C}$. Apply a new automorphism $\varphi$ that sends $y\mapsto y-\alpha/n$, and let $F=\varphi\psi(F')$, $G=\varphi\psi(G')$.
Then Theorem~\ref{main_thm} implies that $F$ and $G$ satisfy the conditions (1) and (2).
\end{proof}

\begin{definition}\label{def:Tbar}
We extend the definition of $T_{m,n,a}$ to  $\overline{T}_{m,n,a}$ . Fix a positive integer $\mathbf{d}$. Let $K_\mathbf{d}=\mathbb{C}[x^{\pm 1/\mathbf{d}}]$, and let $\overline{\mathcal{R}}=K_\mathbf{d}[y]$.

For any $m,n\in\mathbb{Z}_{>0}$,   let $\overline{T}_{m,n,a}$ be the set of elements $f$ in $\overline{\mathcal{R}}$ such that

\noindent $\bullet$ $(m,n)\in N^0(f) \subseteq \Rect_{m,n}$ (in particular, $f\in\mathbb{C}[x^{1/\mathbf{d}},y]$); 

\noindent $\bullet$ $f_n^{(0,1)}=x^m y^n$;

\noindent $\bullet$ $f_{n-1}^{(0,1)}=0$;

\noindent $\bullet$ and $z_s^{(0,1)}$ is a monomial, where $z$ is the inner polynomial of $(f,a)$ and $s=\deg_y(z)$. Moreover $\text{supp}(z_s^{(0,1)})$ lies in $\mathfrak{R}$.
\end{definition}

\begin{conjecturealpha}\label{main_conj5}
Let $F,G\in\overline{\mathcal{R}}$ be a pair of elements  such that

\noindent\emph{(1)}  $[F,G]\in \mathbb{C}\setminus\{0\}$;

\noindent\emph{(2)}  $F\in \overline{T}_{m,n,a}$ and $G\in \overline{T}_{bm/a,bn/a,b}$.

\noindent\emph{(3)}  $N^0(\ff)$ is similar to $N^0(\GG)$ with
the origin as center of similarity and with ratio $\deg(\ff) : \deg(\GG) = a : b$. 

\noindent Then $(\frac{a}{a+b},0)\notin N^0(F)$ and $(\frac{b}{a+b},0)\notin N^0(G)$.
\end{conjecturealpha}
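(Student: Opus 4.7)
Argue by contradiction: assume $(\tfrac{a}{a+b},0)\in N^0(F)$. By hypothesis (3) this is equivalent to $(\tfrac{b}{a+b},0)\in N^0(G)$, so it suffices to treat $F$. Since $N^0(F)=\conv(\supp(F)\cup\{(0,0)\})$, we get some $(c^\ast,0)\in\supp(F)$ with $c^\ast\ge\tfrac{a}{a+b}$; take $c^\ast$ maximal. The segment from $(c^\ast,0)$ to $(m,n)$ then lies on the lower boundary of $N^0(F)$, and its outward normal $w:=(n,-(m-c^\ast))$ makes $F_+^w$ supported on this segment and, by similarity, $G_+^w$ supported on the similar segment from $(bc^\ast/a,0)$ to $(bm/a,bn/a)$.

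\textbf{Single-variable $a$-th power.} One checks
$$
\deg_w F + \deg_w G - (w_1+w_2) \;=\; \tfrac{a+b}{a}\,nc^\ast - (n-m+c^\ast) \;>\; 0
$$
for every $c^\ast\ge\tfrac{a}{a+b}$, so the constancy of $[F,G]$ forces $[F_+^w,G_+^w]=0$. Set $t:=x^{(m-c^\ast)/n}y$ (of $w$-degree $0$) and write $F_+^w=x^{c^\ast}\widetilde F(t)$, $G_+^w=x^{bc^\ast/a}\widetilde G(t)$ with $\widetilde F,\widetilde G\in\mathbb C[t]$ of degrees $n$ and $bn/a$. The chain-rule expansion of the leading Jacobian yields $a\widetilde F\widetilde G'=b\widetilde G\widetilde F'$; integrating and using $\gcd(a,b)=1$ gives $\widetilde H\in\mathbb C[t]$ of degree $n/a$ with $\widetilde F=\widetilde H^a$ and $\widetilde G=c\widetilde H^b$. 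From $F^{(0,1)}_{n-1}=0$ we get $h_{n/a-1}=0$ for $\widetilde H=t^{n/a}+\sum h_j t^j$, and $h_0\ne 0$ because $(c^\ast,0)\in\supp(F)$.

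\textbf{Contradiction via the innermost polynomial.} Decompose $F=Q^a+\sum_{j=0}^{(a-1)\delta-1}e_j W_F^j+Z$ as in \eqref{eq:Z}. Since $Q\in R_{m/a,n/a}$ carries a nonzero coefficient at $(m/a,0)$, the $w$-leading monomial $(Q^a)_+^w=q_{m/a,0}^a\,x^m$ at $(m,0)$ has $w$-degree $nm>nc^\ast=\deg_w F$; by maximality of $c^\ast<m$ we have $(m,0)\notin\supp(F)$ and $(m,0)\notin\supp(W_F^j)$ for any $j$, so the identity forces $(m,0)\in\supp(Z)$. More generally, matching the factorization $F_+^w=(x^{c^\ast/a}\widetilde H(t))^a$ against $(Q^a)_+^w$ and the monomial pieces $(W_F^j)_+^w$ propagates a cascade of forced nonzero coefficients of $Z$ at lattice points on and near the slanted edge $[(c^\ast,0),(m,n)]$; these lie outside the narrow region $\mathfrak R$, contradicting the $\overline T_{m,n,a}$-requirement that the top-$y$-degree slice $z_s^{(0,1)}$ of the inner polynomial $z:=F-Q^a$ be a monomial supported in $\mathfrak R$.

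\textbf{Expected difficulty.} The hard part is the last paragraph: because $\overline T_{m,n,a}$ only controls $z_s^{(0,1)}$ rather than all of $N^0(z)$, one must carefully track how the $a$-th-power structure of $F_+^w$ propagates through the $Q^a+\sum e_j W_F^j+Z$ decomposition and eventually disturbs $z_s^{(0,1)}$. A cleaner route may be to first reduce to the extremal case $c^\ast=\tfrac{a}{a+b}$ (where the $w$-edge geometry is tightest), and then to strengthen Theorem~\ref{main_thm}/Corollary~\ref{main_cor} to bound the entire Newton polygon of $z$ (not merely its NE vertex) under the $\overline T$-hypotheses, at which point the forced support of $z$ along the slanted $w$-edge gives the contradiction immediately.
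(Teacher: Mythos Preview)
Your proposal is not a proof; it is a sketch with a substantial gap that you yourself identify. Two concrete problems:

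\textbf{The edge claim is unjustified.} You assert that the segment from $(c^\ast,0)$ to $(m,n)$ lies on the boundary of $N^0(F)$ and take $w$ normal to it. But nothing in the hypotheses of $\overline T_{m,n,a}$ forces $(c^\ast,0)$ and $(m,n)$ to be adjacent vertices; there may be other vertices on the right boundary of $N^0(F)$ strictly between them, in which case your chosen $w$ does not pick out the leading form you want and the single-variable description $F_+^w=x^{c^\ast}\widetilde F(t)$ collapses. You also claim $Q$ has a nonzero coefficient at $(m/a,0)$, but in the $\overline T$ setting $N^0(F)$ is only required to sit inside $\Rect_{m,n}$, not to equal it, so there is no reason the pre-generator should hit the corner $(m/a,0)$.

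\textbf{The contradiction step is missing.} The entire third paragraph, the ``cascade of forced nonzero coefficients of $Z$,'' is where the actual work lies, and you do not carry it out. The $\overline T_{m,n,a}$ hypothesis only constrains the single top slice $z_s^{(0,1)}$ of the inner polynomial, so even if you could show $(m,0)\in\supp(Z)$ you would still need a genuine mechanism to propagate this up to $y$-degree $s$ and violate the monomiality/$\mathfrak R$-support condition there. Your own ``expected difficulty'' paragraph concedes this and proposes strengthening Theorem~\ref{main_thm} as a prerequisite---but that strengthening is itself open.

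For comparison, the paper does \emph{not} prove Conjecture~\ref{main_conj5} in general; it proves only the special case $a=n$, and by a completely different route. There one has $n/a=1$, so $Q=p^{a-s}y$ is linear in $y$ and the inner polynomial $z$ has $y$-degree $s\le a-2$. The argument expands $G$ via the extended Magnus' formula (Theorem~\ref{prop:Magnus_formula_extended}), extracts from the vanishing of the negative-$y$-degree components a system of weighted-homogeneous equations $[\hat h_i]^{\mathbf w}_{b-i}(\alpha)=0$, and then observes this same system is satisfied by an auxiliary pair $(F^\ast,G^\ast)=(y^a+\alpha_{s-1}y^{s-1}+\cdots+\alpha_0,\,(F^\ast)^{b/a})$ with $G^\ast$ a genuine polynomial; integrality forces $F^\ast=y^a$, hence $\alpha=0$, a contradiction. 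The Magnus-formula bookkeeping and the order-$a$ recurrence of Lemma~\ref{lem:recurrence} are what replace your missing ``cascade.''
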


\begin{corollary}[Corollary of Proposition \ref{prop:E}]
Conjecture \ref{main_conj5} implies  the Jacobian conjecture.
\end{corollary}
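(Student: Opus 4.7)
The plan is to combine Proposition \ref{prop:E} with a classical fact about the Newton polygons of Jacobian pairs. Suppose, for contradiction, that Conjecture \ref{main_conj5} holds but the Jacobian conjecture fails. By Proposition \ref{prop:E}, there exist $(a,b,m,n)\in\mathcal{Q}$ and $F,G\in\mathcal{R}$ with $[F,G]\in\mathbb{C}\setminus\{0\}$, $F\in T_{m,n,a}$, and $G\in T_{bm/a,bn/a,b}$. The strategy is to verify that this pair satisfies the three hypotheses of Conjecture \ref{main_conj5}, and then to contradict its conclusion using the well-known fact that $(1,0)\in N^0(F)$.

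Hypothesis (1) is immediate. For hypothesis (2), since $\mathcal{R}\subseteq\overline{\mathcal{R}}$ (take $\mathbf{d}=1$), a direct comparison of Definitions \ref{def:T} and \ref{def:Tbar} gives $F\in\overline{T}_{m,n,a}$ and $G\in\overline{T}_{bm/a,bn/a,b}$. For hypothesis (3), the definition of $\mathcal{Q}$ forces $a\ge 2$ together with $a\mid m$ and $a\mid n$, so $m,n\ge 2$ and hence $\deg F=m+n>1$ and $\deg G=b(m+n)/a>1$. Since any automorphism of $\mathbb{C}[x,y]$ multiplies the Jacobian by a nonzero constant (by the chain rule), $(F,G)$ is still a Jacobian pair, so Proposition \ref{prop:similar} applies and yields $N^0(F)\sim N^0(G)$ with the origin as center of similarity and ratio $\deg F:\deg G=a:b$. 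Conjecture \ref{main_conj5} therefore applies and gives, in particular, $(\tfrac{a}{a+b},0)\notin N^0(F)$.

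On the other hand, exactly as in the proof of Proposition \ref{BtoA}, \cite[Proposition 10.2.6]{vdEssen} applies to the Jacobian pair $(F,G)$ (both of total degree $>1$) and forces $\{(0,0),(0,1),(1,0)\}\subseteq N^0(F)$. Since $N^0(F)$ is convex and contains both $(0,0)$ and $(1,0)$, it contains the entire segment joining them; because $0<\tfrac{a}{a+b}<1$, this segment passes through $(\tfrac{a}{a+b},0)$, contradicting the conclusion of Conjecture \ref{main_conj5}. Hence the Jacobian conjecture holds.

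The argument is a short chain of deductions, so I do not expect a genuine obstacle. The only point that deserves careful bookkeeping is the verification of hypothesis (3): one must confirm that the similarity ratio coming out of Proposition \ref{prop:similar} is exactly $a:b$ (rather than some equivalent reduced ratio), which is forced because $\deg F=m+n$ and $\deg G=b(m+n)/a$ are read directly from $F\in T_{m,n,a}$ and $G\in T_{bm/a,bn/a,b}$.
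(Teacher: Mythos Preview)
Your proof is correct and follows the same route as the paper's: invoke Proposition~\ref{prop:E} to produce $F,G$, apply Conjecture~\ref{main_conj5}, and derive a contradiction from $(1,0)\in N^0(F)$. The paper's proof is simply terser---it asserts ``$(1,0)\in N^0(F)$'' without citing \cite[Proposition~10.2.6]{vdEssen} and does not explicitly verify hypothesis~(3) of Conjecture~\ref{main_conj5}---so your version in fact fills in two steps the paper leaves implicit. (Your remark about automorphisms preserving the Jacobian up to a constant is correct but unnecessary: Proposition~\ref{prop:E} already hands you $[F,G]\in\mathbb{C}\setminus\{0\}$ directly.)
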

\begin{proof}
If the Jacobian conjecture is false, then there exist $F,G$ satisfying (1) and (2) of Proposition \ref{prop:E}. But then $(1,0)\in N^0(F)$, which contradicts the assertion $(\frac{a}{a+b},0)\notin N^0(F)$ of Conjecture \ref{main_conj5}. 
\end{proof}

\section{Proof of Conjecture \ref{main_conj5} for a special case}

In this section we prove Conjecture \ref{main_conj5} for the special case where $a=n$. The main purpose is to illustrate how Theorem \ref{main_thm} might help us eventually prove this conjecture, and why those $e_j$ for $j>0$ are not relevant in the argument.

We set 
$$\mathcal{C}=\{(\aaa,\bbb,n)\in \mathbb{Z}_{>0}^3\, : \  n/\aaa\in\mathbb{Z}\text{ but }\bbb/\aaa\not\in\mathbb{Z}\text{ and }\aaa/\bbb\not\in\mathbb{Z}\}.$$

Fix  $(\aaa,\bbb,n)\in \mathcal{C}$ and $\mathbf{d}\in\mathbb{Z}_{>0}$. Let $a=\aaa/\gcd(\aaa,\bbb)$ and $b=\bbb/\gcd(\aaa,\bbb)$. 
Let $s\in [0,\frac{a-1}{a}n-1]$ and $\mathcal{K}_\mathbf{d}=K_\mathbf{d}[\tilde{q}_0, \tilde{q}_1, ..., \tilde{q}_{n/a-2}, \tilde{z}_0, \tilde{z}_1,...,\tilde{z}_{s-1}]$. 
Define $$\aligned
\tilde{q}&=\tilde{q}_{n/a} y^{n/a} + \tilde{q}_{n/a -1} y^{n/a -1} + \cdots + \tilde{q}_{0}  \in \mathcal{K}_\mathbf{d}[y],\\
\tilde{z}&=\tilde{z}_{s} y^{s} + \tilde{z}_{s -1 } y^{s -1} + \cdots + \tilde{z}_{0}  \in \mathcal{K}_\mathbf{d}[y],
\endaligned$$
where 
both $\tilde{q}_{n/a}$ and $\tilde{z}_{s}$ are nonzero monomials in $K_{\mathbf{d}}$, and  $\tilde{q}_{n/a -1}=0$.
For any $(e_0,...,e_{\aaa+\bbb-1})\in \mathbb{C}^{\aaa+\bbb}$ with $e_0=1$,  and for 
each $i\in\mathbb{Z}_{\ge - b n/a}$, we define $\tilde{h}_{-i}\in \mathcal{K}_\mathbf{d}$ by
\begin{equation}\label{eq:e and h}
\sum_{j=0}^{\aaa+\bbb-1} e_j (\tilde{q}^{a} +   \tilde{z})^{(\bbb-j)/\aaa} =\sum_{i=- b n/ a}^{\infty}  \tilde{h}_{-i} y^{-i}\in \mathcal{K}_\mathbf{d}[y][[y^{-1}]],
\end{equation}
where the expansion of $(\tilde{q}^{a} +   \tilde{z})^{(\bbb-j)/\aaa}$ is given by inverting $\tilde{q}_{n/a}y^{n/a}$. 
The extended Magnus' formula motivates the above definition of $\tilde{h}_{i}$ and the definition of $h_{i}$ in \eqref{eq:h}. See the beginning of the proof of Conjecture \ref{main_conj5} for more details of the motivation.

\subsection{The case that $a=n(=\aaa)$} 
In this case, $b=\bar{b}$. 
Note that $0\le s \le \frac{a-1}{a}n-1=a-2$.
Write $\tilde{q}_1=p^{a-s}$ and $\tilde{z}_s=\zeta^{a-s}$ for some $p,\zeta \in K_{\mathbf{d}(a-s)}$. 
Then $\tilde{q}=p^{a-s}y$, $\tilde{z}=\zeta^{a-s}y^s+\tilde{z}_{s-1}y^{s-1}+\cdots+\tilde{z}_0$. 
Denote 
$$d_p :=\deg_xp=\frac{m}{(a-s)a},\quad d_\zeta:=\deg_x\zeta$$ 
Note that 
\begin{equation}\label{eq:dzeta}
\frac{sm}{(a-s)a} \le d_\zeta \le \frac{m}{a-s}-\frac{m-\frac{a}{a+b}}{a-\frac{a}{a+b}}
\end{equation}
which follows from the assumption of $\text{Supp}(z_s^{(0,1)})$ (see  Definition \ref{def:Tbar}). 
Define  
$$\tilde{w}_{i} := \frac{p^{(s-i)a}}{\zeta^{a-i}}\tilde{z}_{i} \in\mathcal{K}_{\mathbf{d}(a-s)}\; \textrm{for }i\in [0,s];\quad  
\tilde{w}_{i}:=0\; \textrm{ for  }i\notin [0,s]. 
$$
 In particular, $\tilde{w}_{s}=\frac{p^{0}}{\zeta^{a-s}}\tilde{z}_s=1$.

\begin{lemma}
For every nonnegative integer $i$, we have  
$$\frac{p^{ai-(a-s)b}\tilde{h}_{b-i}}{\zeta^i}
=\sum_{j=0}^{a+b-1} e_j \Big(\frac{p^{s}}{\zeta}\Big)^j 
 \sum_{an_0+(a-1)n_1+\cdots+(a-s)n_s=i-j}
 \binom{(b-j)/a}{n_0,\dots,n_s}\prod_{l=0}^{s-1} \tilde{w}_{l}^{n_l} 
$$ 
where $n_0,\dots,n_{s}\in\mathbb{Z}_{\ge0}$. (See \S\ref{section:Appendix B} for the definition of the multinomial coefficient $\binom{(b-j)/a}{n_0,\dots,n_s}$.)
\end{lemma}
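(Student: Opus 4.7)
The plan is to compute $\tilde h_{b-i}$ directly from the defining expansion \eqref{eq:e and h} by applying the generalized multinomial formula from \S\ref{section:Appendix B} to each summand $(\tilde q^{a}+\tilde z)^{(b-j)/a}$. The idea is to treat $\tilde q^{a}=p^{(a-s)a}y^{a}$ as the dominant term (it is leading in $y$ since $a>s$), and each $\tilde z_l\,y^l$ (for $0\le l\le s$) as a subleading term.

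First I would write, with $N=n_0+\cdots+n_s$,
\[
(\tilde q^{a}+\tilde z)^{(b-j)/a}=\sum_{n_0,\dots,n_s\ge 0}\binom{(b-j)/a}{n_0,\dots,n_s}\bigl(p^{(a-s)a}y^{a}\bigr)^{(b-j)/a-N}\prod_{l=0}^{s}\bigl(\tilde z_l\,y^{l}\bigr)^{n_l}.
\]
The $y$-degree of a generic term equals $b-j-\sum_l(a-l)n_l$, so extracting the coefficient of $y^{b-i}$ enforces precisely the constraint $a n_0+(a-1)n_1+\cdots+(a-s)n_s=i-j$ that appears on the right-hand side of the lemma.

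Next I would substitute $\tilde z_l=\tilde w_l\,\zeta^{a-l}/p^{(s-l)a}$ for $0\le l<s$, and $\tilde z_s=\zeta^{a-s}=\tilde w_s\,\zeta^{a-s}$ (using $\tilde w_s=1$), and then collect powers of $p$ and $\zeta$. Two elementary identities do all the work: the constraint above rewrites as $\sum_l l\,n_l=aN-(i-j)$, whence $\sum_l(s-l)n_l=(i-j)-(a-s)N$. The contribution $-(a-s)aN$ from $\bigl(p^{(a-s)a}\bigr)^{(b-j)/a-N}$ then cancels exactly against the $+a(a-s)N$ arising from the denominator $p^{\sum_l(s-l)a\,n_l}$, leaving the clean factor $p^{(a-s)(b-j)-a(i-j)}\,\zeta^{i-j}\,\prod_{l=0}^{s}\tilde w_l^{n_l}$. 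Since $\tilde w_s=1$, the product may be truncated to $\prod_{l=0}^{s-1}\tilde w_l^{n_l}$.

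Finally I would multiply by $p^{ai-(a-s)b}/\zeta^{i}$: the exponent of $p$ collapses to $sj$ and the exponent of $\zeta$ to $-j$, producing the factor $(p^{s}/\zeta)^{j}$ stated in the lemma, and then sum over $j$ with weights $e_j$. The only mild obstacle is keeping the exponent bookkeeping straight during the telescoping cancellation and verifying that the multinomial expansion of Appendix~B applies unambiguously in $\mathcal K_{\mathbf d}[y][[y^{-1}]]$ even though $(b-j)/a$ is generally not an integer; everything else is a direct computation.
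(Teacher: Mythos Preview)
Your proposal is correct and follows essentially the same approach as the paper: both expand $(\tilde q^{a}+\tilde z)^{(b-j)/a}$ via the generalized multinomial formula with $p^{(a-s)a}y^{a}$ as the dominant term, extract the coefficient of $y^{b-i}$ to obtain the constraint $\sum_l(a-l)n_l=i-j$, substitute $\tilde z_l$ in terms of $\tilde w_l$, and observe that the $N$-dependent exponents of $p$ cancel. The only cosmetic difference is that the paper first factors out $p^{(a-s)(b-j)}y^{b-j}$ and rewrites the bracket in the variable $\zeta/(p^{a}y)$ before expanding, whereas you expand first and simplify afterward; the bookkeeping and the final identity are identical.
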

\begin{proof}
The equation \eqref{eq:e and h} gives 
$$\aligned
\sum_{i=- b}^{\infty}  \tilde{h}_{-i} y^{-i}
&= \sum_{j=0}^{a+b-1} e_j( p^{(a-s)a}y^{a} +   \tilde{z})^{(b-j)/a}\\ 
&=\sum_{j=0}^{a+b-1} e_jp^{(a-s)(b-j)}y^{b-j} \Big( 1 +   \sum_{l=0}^s \tilde{z}_{s-l}\frac{y^{s-l-a}}{p^{(a-s)a}} \Big)^{(b-j)/a} \\ 
&=\sum_{j=0}^{a+b-1} e_jp^{(a-s)(b-j)}y^{b-j}  \Big( 1 +   \sum_{l=0}^s \tilde{w}_{{s}-l} \big( \frac{\zeta}{p^ay}\big)^{a-s+l} \Big)^{(b-j)/a} \\ 
\endaligned
$$
Then  
$$\aligned
\tilde{h}_{b-i}
&=\sum_{j=0}^{a+b-1} e_j p^{(a-s)(b-j)}
\Big[ 
 \Big( 1 +   \sum_{l} \tilde{w}_{a-s} \big( \frac{\zeta}{p^ay}\big)^{l} \Big)^{(b-j)/a}
\Big]_{y^{j-i}}\\
&=\sum_{j=0}^{a+b-1} e_j p^{(a-s)(b-j)}
 \sum_{an_0+(a-1)n_1+\cdots+(a-s)n_s=i-j}
 \binom{(b-j)/a}{n_0,\dots,n_s}\prod_l \Big(\tilde{w}_{a-l} \big( \frac{\zeta}{p^a}\big)^l \Big)^{n_{a-l}} \\
&=\sum_{j=0}^{a+b-1} e_j p^{(a-s)(b-j)-a(i-j)}\zeta^{i-j}
 \sum_{an_0+(a-1)n_1+\cdots+(a-s)n_s=i-j}
 \binom{(b-j)/a}{n_0,\dots,n_s}\prod_l \tilde{w}_{a-l} ^{n_{a-l}} \\
\endaligned
$$ 
Replacing $l$ by $a-l$ and dividing both sides by $p^{(a-s)b-ai}\zeta^i$. Note that $l$ runs from $0$ to $s-1$. Thus we obtain the expected identity. 
\end{proof}

\begin{lemma}\label{lem:h another}
 Let $\tilde{z}_{{s}}=0$, 
  $p=1$ and  define $\tilde{h}^*_{-i}\in  \mathbb{C}[\tilde{z}_1,...,\tilde{z}_{s-1}]$ by
\begin{equation}\label{eq:h*}
(y^{a} +  \tilde{z}_{s -1 } y^{s -1} + \cdots + \tilde{z}_{0} )^{b/a} =\sum_{i=- b}^{\infty}  \tilde{h}^*_{-i} y^{-i}\in \mathbb{C}[\tilde{z}_1,...,\tilde{z}_{s-1}][y][[y^{-1}]],
\end{equation}
Then
$$\tilde{h}^*_{b-i}
= \sum_{an_0+(a-1)n_1+\cdots+(a-s+1)n_{s-1}=i} 
 \binom{b/a}{n_0,\dots,n_{s-1}}\prod_{l=0}^{s-1} \tilde{z}_{l} ^{n_l}.  $$
 where $n_0,\dots,n_{s-1}\in\mathbb{Z}_{\ge0}$. 
\end{lemma}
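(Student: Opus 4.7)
The plan is to obtain the stated identity as a direct application of the generalized multinomial expansion for the rational power $(b/a)$, specialized to the case $p=1$, $\tilde z_s=0$. First I would factor out $y^b$ to rewrite the left-hand side of \eqref{eq:h*} as
\begin{equation*}
(y^a + \tilde z_{s-1}y^{s-1}+\cdots+\tilde z_0)^{b/a} = y^b\,\Bigl(1+\sum_{l=0}^{s-1}\tilde z_l\, y^{l-a}\Bigr)^{b/a},
\end{equation*}
so that the expansion is taken around $y=\infty$ (in agreement with the convention used for $\tilde h^*_{-i}$).

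Next I would apply the generalized multinomial theorem (see \S\ref{section:Appendix B}): for each tuple $(n_0,\ldots,n_{s-1})\in\mathbb{Z}_{\ge 0}^{s}$,
\begin{equation*}
\Bigl(1+\sum_{l=0}^{s-1}\tilde z_l\, y^{l-a}\Bigr)^{b/a} = \sum_{n_0,\ldots,n_{s-1}\ge 0}\binom{b/a}{n_0,\ldots,n_{s-1}}\prod_{l=0}^{s-1}\bigl(\tilde z_l\, y^{l-a}\bigr)^{n_l}.
\end{equation*}
Multiplying by $y^b$ and simplifying the $y$-exponents gives a total power of $y^{\,b-\sum_{l=0}^{s-1}(a-l)n_l}$ in the monomial indexed by $(n_0,\ldots,n_{s-1})$.

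Finally I would extract the coefficient of $y^{b-i}$ by collecting all tuples with $\sum_{l=0}^{s-1}(a-l)n_l=i$, namely $an_0+(a-1)n_1+\cdots+(a-s+1)n_{s-1}=i$. This produces exactly the claimed formula for $\tilde h^*_{b-i}$.

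I do not expect a real obstacle here: once the expansion is set up, everything reduces to matching exponents. The only point requiring mild care is to verify that the $y^{l-a}$ substitutions produce a well-defined element of $\mathbb{C}[\tilde z_1,\ldots,\tilde z_{s-1}][y][[y^{-1}]]$ (so that the coefficient extraction is legitimate) and that the convention for $\binom{b/a}{n_0,\ldots,n_{s-1}}$ from \S\ref{section:Appendix B} is the one implicit in the multinomial expansion. Both are routine, and the proof is essentially a specialization of the computation carried out in the preceding lemma.
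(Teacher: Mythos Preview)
Your proposal is correct and follows essentially the same approach as the paper: factor out $y^b$, apply the generalized multinomial expansion from \S\ref{section:Appendix B}, and extract the coefficient of $y^{b-i}$. The only cosmetic difference is indexing---the paper writes the inner sum as $\sum_{l=a-s+1}^{a} y^{-l}\tilde z_{a-l}$ rather than your $\sum_{l=0}^{s-1}\tilde z_l\,y^{l-a}$---but these are identical after the substitution $l\mapsto a-l$.
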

\begin{proof}
The equation \eqref{eq:e and h} gives 
$$\aligned
\sum_{i=- b}^{\infty}  \tilde{h}^*_{-i} y^{-i}
&=( y^{a} +   \tilde{z})^{b/a} 
=y^{b} \Big( 1 +   y^{-a} \sum_{l=a-s+1}^a  y^{a-l}\tilde{z}_{a-l} \Big)^{b/a} 
=y^{b}  \Big( 1 +   \sum_{l=a-s+1}^a  y^{-l}\tilde{z}_{a-l} \Big)^{b/a} \\ 
\endaligned
$$
Then
$$\aligned
\tilde{h}^*_{b-i}
=\Big[  y^{b}  \Big( 1 +   \sum_{l=a-s+1}^a  y^{-l}\tilde{z}_{a-l} \Big)^{b/a}\Big]_{y^{b-i}}
= \sum_{an_0+(a-1)n_1+\cdots+(a-s+1)n_{s-1}=i} 
 \binom{b/a}{n_0,\dots,n_{s-1}}\prod_{l=0}^{s-1} \tilde{z}_{l} ^{n_l} \\
\endaligned
$$ 
\end{proof}

Define the following polynomials 
$\hat{h}_{b-i}\in K_{\mathbf{d}(a-s)}[x_0,\dots,x_{s-1}]$ for all $i\ge0$:
\begin{equation}\label{eq:hhat}
\hat{h}_{b-i}(x_0,\dots,x_{s-1})= 
\sum_{j=0}^{a+b-1}e_j \Big(\frac{p^s}{\zeta}\Big)^j
\sum_{an_0+\cdots+(a-s)n_s = i-j} 
 \binom{(b-j)/a}{n_0,\dots,n_s}\prod_{l=0}^{s-1} x_l^{n_l} 
\end{equation}

For a given $K_{\mathbf{d}(a-s)}$-algebra homomorphism $\phi:\mathcal{K}_{\mathbf{d}(a-s)}\longrightarrow K_{\mathbf{d}(a-s)}$, and for all $i\ge0$, define 
\begin{equation}\label{eq:h}
\aligned
h_{b-i}
&=\phi \bigg(\frac{p^{ai-(a-s)b}\tilde{h}_{b-i}}{\zeta^i}\bigg)
=\frac{p^{ai-(a-s)b}\phi(\tilde{h}_{b-i})}{\zeta^i}\\
&=\hat{h}_{b-i}\big(\phi(\tilde{w}_0),\dots,\phi(\tilde{w}_{s-1})\big)
=\phi\big(\hat{h}_{b-i}(\tilde{w}_0,\dots,\tilde{w}_{s-1})\big)
\in K_{\mathbf{d}(a-s)}
\endaligned
\end{equation}

Given $F\in \overline{T}_{m,n,a}$, we can write $F=q^a+z$, where $z=z_sy^s+z_{s-1}y^{s-1}+\cdots+z_0$, and $s=\deg_yz\le n-2$ by the definition of $T_{m,n,a}$. Note that $F$ determines a homomorphism $\phi:\mathcal{K}_{\mathbf{d}(a-s)}\to K_{\mathbf{d}(a-s)}$ by $\phi(\tilde{z}_i)=z_i$. Then $h_i$ are determined by \eqref{eq:h}.

\begin{proof}[Proof of Conjecture \ref{main_conj5} in  the case $n=a$] 
Assume that $F,G$ satisfy the conditions (1)--(3), and $(\frac{a}{a+b},0) \in N^0(F)$ (or equivalently, $(\frac{b}{a+b},0) \in N^0(G)$).
Note that $s\neq0$, otherwise \eqref{eq:dzeta} becomes $0\le d_\zeta \le \frac{m}{a}-\frac{m-\frac{a}{a+b}}{a-\frac{a}{a+b}}$, but the assumption $(\frac{a}{a+b},0)\in N^0(F)$ implies $d_\zeta\ge \frac{1}{a+b}$, and it is easy to verify that $\frac{1}{a+b}>\frac{m}{a}-\frac{m-\frac{a}{a+b}}{a-\frac{a}{a+b}}$, which leads to a contradiction.  

We first give the motivation of introducing $\tilde{h}_{i}$ in \eqref{eq:e and h} and $h_{i}$ in \eqref{eq:h}, and describe the idea of the proof. By the extended Magnus' formula (Proposition \ref{prop:Magnus_formula_extended}), the homogeneous components of $G$ are determined by a linear combination of rational powers of $F$. The polynomial $\tilde{h}_{i}$ is introduced such that $\phi(\tilde{h}_i)$ is the $\deg_y=i$ component of $G$ (for $i$ not too negative). The polynomial $h_{i}$ is obtained from $\phi(\tilde{h}_i)$ by multiplying by a rational power of $x$. Since $G$ has no negative components, $h_i=0$ for $i<0$ but not too negative. It turns out that we do not need the full strength of these equations, but only the coefficient $[h_i]_{x^{d_{h_i}}}=0$ at the so-called expected $x$-degree $d_{h_i}$. This gives us a system of equations $[\hat{h}_{i}]^{\bf w}_{b-i}=0$; see \eqref{eq:hi di-}. This system of equations coincides with the one obtained by another pair of carefully chosen polynomials $(F^*,G^*)$, from which we easily get the desired contradiction.

We extend the definition of $\deg_x(f)$ for $f\in K_{\mathbf{d}(a-s)}$ by defining $\deg_x(0)=-\infty$. 

We denote 
$[f]_i=[f]_{x^i}\cdot x^i$ to be the degree-$i$ term of $f$.

Let ${\bf w}=(a ,a-1,...,a-s+1)$. 
For $\hat{f}=\sum_\alpha c_\alpha \mathbf{x}^\alpha\in \mathbb{C}[x_0,\dots,x_{{s}-1}]$, $d\in\mathbb{Q}$, define $[\hat{f}]^{\bf w}_{d}$ to be the sum of terms of $\hat{f}$ with ${\bf w}$-weight $d$, that is, $[\hat{f}]^{\bf w}_{d}=\sum_{\alpha\cdot{\bf w}=d} c_\alpha \mathbf{x}^\alpha$.

Define 
$${\bf v}=(v_0,\dots,v_{{s}-1})= \big( \deg_x \phi(\tilde{w}_{0}), \deg_x \phi(\tilde{w}_{1}), \dots, \deg_x \phi(\tilde{w}_{{s}-1})\big)\in (\mathbb{Q}_{-\infty})^{s}.$$  
There exists a unique $k\in\mathbb{Q}$ such that
${\bf v}^+ = (v^+_0,\dots,v^+_{s-1}) := k{\bf w}$ satisfies the condition that ${\bf v}^* = {\bf v}^+ -{\bf v}\in (\mathbb{Q}_{\ge0}\cup\infty)^{s}$  has at least one zero coordinate. 
In other words, $k$ is the minimum choice such that ${\bf v}^*$ has no negative coordinate. 
Note that 
$v_i^+ = k(a-i)\ge v_i$ for $0\le i\le s-1$, and the equality holds for at least one $i$. 

Next we give a lower bound for $k$. Since 
$(\frac{b}{a+b},0)\in N^0(G)$, we have $\deg_x\phi(\tilde{h}_0)=\deg_x G_0\ge \frac{b}{a+b}$. Then 
$
kb \ge \deg_xh_0=\deg_x\frac{p^{sb}}{\zeta^b} \tilde{h}_0 \ge sbd_p-bd_\zeta+\frac{b}{a+b}
$. By \eqref{eq:dzeta}, we have 
\begin{equation}\label{eq:kge}
k\ge sd_p-d_\zeta+\frac{1}{a+b}
\ge \frac{sm}{(a-s)a}
-\Big(\frac{m}{a-s}-\frac{m-\frac{a}{a+b}}{a-\frac{a}{a+b}}\Big)
+\frac{1}{a+b}
=\frac{m-\frac{a}{a+b}}{a(a+b-1)}
>0.
\end{equation}

Let $h_i$ be as defined in \eqref{eq:h}. 
Define the expected $x$-degree of $h_i$ to be 
$$d_{h_i}:=k(b-i) \in\mathbb{Q}.$$ 
Note that $\deg_x h_i \le d_{h_i}$. Indeed, if $h_i=0$, then $\deg_x h_i=-\infty<d_{h_i}$; if $h_i\neq0$, then  
\begin{equation}\label{eq:deg hi>=di}
\aligned
\deg_x h_i 
&
\le \max_j \max_{an_0+\cdots+(a-s)n_s=b-i-j}
\deg_x\phi \big( (\frac{p^s}{\zeta})^j\prod_l \tilde{w}_{l}^{n_l} \big)\\
&\le \max_j \max_{ {\bf n}\cdot{\bf w} + (a-s)n_s =b-i-j} 
(sd_p-d_\zeta)j + {\bf n}\cdot {\bf v}^+\\
&= \max_j \max_{ {\bf n}\cdot{\bf w} + (a-s)n_s =b-i-j} 
( sd_p-d_\zeta)j + k(b-i-j-(a-s)n_s )\\
&\le \max_j \max_{ {\bf n}\cdot{\bf w} + (a-s)n_s =b-i-j} 
(sd_p-d_\zeta-k)j + k(b-i)\\
&\le k(b-i) = d_{h_i}\\
\endaligned
\end{equation}
where ${\bf n}=(n_0,\dots,n_{s-1})$, and the last ``$\le$'' is because by \eqref{eq:dzeta},
$$sd_p-d_\zeta-k \le s\frac{m}{(a-s)a}-\frac{sm}{(a-s)a}-k=-k < 0.$$ 
Note that for the last  ``$\le$''  to ``$=$'', we must have $j=0$. For the second last ``$\le$'' to be ``$=$'', we must have $n_s=0$.  So if both ``$\le$'' are ``$=$'', then ${\bf n}\cdot{\bf w}=b-i$.

We verify that the condition for Lemma \ref{lem:recurrence} (b) holds. Indeed, $[h_i]_{d_{h_i}} = 0$ holds for $-1\ge i\ge -a+2$ because  $h_i=0$. To show that 
$[h_{-a+1}]_{d_{h_{-a+1}}}=0$,  note that
$$
\deg_x h_{-a+1} 
= 
\deg_x \phi\Big(\frac{p^{a(a+b-1)-(a-s)b}\tilde{h}_{-a+1}}{\zeta^{a+b-1}}\Big)
= 
(a(a+b-1)-(a-s)b) d_p - (a+b-1) d_\zeta + (1-m).
$$
Thus by \eqref{eq:kge} and that $d_p=\frac{m}{(a-s)a}$, we have
$
d_{h_{-a+1}} - \deg_x h_{-a+1} 
\ge
\Big(sd_p -d_\zeta +\frac{1}{b}\Big)(a+b-1)- (a(a+b-1)-(a-s)b) d_p + (a+b-1) d_\zeta - (1-m)
= \frac{m}{a}+\frac{a-1}{b}>0
$. 
Thus $\deg_x h_{-a+1}<d_{h_{-a+1}}$, 
$[h_{-a+1}]_{d_{-a+1}}=0$. 
By Lemma \ref{lem:recurrence} (b) we conclude that 
\begin{equation}\label{eq:hi di-=0}
\textrm{$[h_i]_{d_{h_i}}=0$ holds for all $i<0$.} 
\end{equation}

Next, we claim that  
\begin{equation}\label{eq:hi di-}  
[h_i]_{d_i} 
= \sum_{{\bf n}\cdot{\bf w}=b-i} 
 \binom{b/a}{n_0,\dots,n_{s-1}}\prod_{j=0}^{s-1} [\phi(\tilde{w}_{j})]_{v^+_{j}}^{n_j}
= [\hat{h}_{i}]^{\bf w}_{b-i}\Big( [\phi(\tilde{w}_{0})]_{v^+_0}, \dots, [\phi(\tilde{w}_{{s}-1})]_{v^+_{{s}-1}} \Big) 
\end{equation}
Indeed, the first equality follows from \eqref{eq:deg hi>=di}. 
To show the second equality, note that  
\begin{equation}\label{eq:hat h_i}
\aligned
[\hat{h}_{i}]^{\bf w}_{b-i}
&= \sum_{j=0}^{a+b-1}e_j \Big(\frac{p^{s}}{\zeta}\Big)^j
\sum_{\stackrel{an_0+\cdots+(a-s)n_{s}=b-i-j}{{\bf n}\cdot{\bf w}=b-i}} 
 \binom{(b-j)/a}{n_0,\dots,n_{s}}\prod_{l=0}^{s-1} x_l^{n_l} \\
&= \sum_{{\bf n}\cdot{\bf w}=b-i} 
 \binom{b/a}{n_0,\dots,n_{s-1}}\prod_{l=0}^{s-1} x_l^{n_l}\\
 \endaligned  
\end{equation}
where we use the fact that if ${\bf n}\cdot{\bf w}=b-i$ and $an_0+\cdots+(a-s)n_s=b-i-j$, then $n_s=0$ and $j=0$. 
Now substituting $(x_0,\dots,x_{s-1})$ by 
$\big( [\phi(\tilde{w}_{0})]_{v^+_0}, \dots, [\phi(\tilde{w}_{{s}-1})]_{v^+_{{s}-1}} \big)$, we have shown 
 the second ``$=$'' of  \eqref{eq:hi di-}. 

Define
$\alpha=(\alpha_0,\dots,\alpha_{{s}-1}):=( [\phi(\tilde{w}_{0})]_{v^+_0}, [\phi(\tilde{w}_{1})]_{v^+_1}, \dots, [\phi(\tilde{w}_{{s}-1})]_{v^+_{{s}-1}})\in K_{\mathbf{d}(a-s)}^{{s}}
$.  
It follows from \eqref{eq:hi di-=0} and \eqref{eq:hi di-} that $[\hat{h}_{i}]^{\bf w}_{b-i}(\alpha)=0$ for all $i<0$. 

Define $F^*= \sum_{i=0}^a F^*_iy^i := y^a+\alpha_{{s}-1} y^{{s}-1}+\cdots+\alpha_1y+\alpha_0\in K_{\mathbf{d}(a-s)}[y]$, 
and then expand 
$(F^*)^{b/a}=\sum_{i=-b}^\infty g_{-i} y^{-i}\in K_{\mathbf{d}(a-s)}[y][[y^{-1}]]$ (compare with \eqref{eq:e and h}). 
Define $G^*=\sum_{i=-b}^0 g_{-i} y^{-i}\in K_{\mathbf{d}(a-s)}[y]$. Let $\phi^*:\mathcal{K}_{\mathbf{d}(a-s)}\to K_{\mathbf{d}(a-s)}$ be the $K_{\mathbf{d}(a-s)}$-homomorphism  sending $\tilde{z}_i$ to $\alpha_i$. 
By Lemma \ref{lem:h another} and the equality \eqref{eq:hat h_i}, we have 
$g_{i}=\phi^*(\tilde{h}^*_{i})  = [\hat{h}_i]^{\bf w}_{b-i}=0$ for $i<0$.
So $(F^*)^{b/a}=\sum_{i=0}^b g_{i} y^{i}=G^*$.
Since $F^*,G^*$ are in $K_{\mathbf{d}(a-s)}[y]$ which is an integrally closed domain,  and $a, b$ are coprime, we conclude that $(F^*)^{1/a}\in K_{\mathbf{d}(a-s)}[y]$, thus can be written as 
 $(F^*)^{1/a}=y+r$ with $r\in K_{\mathbf{d}(a-s)}$. 
 Then $F^*=(y+r)^a=y^a+ary^{a-1}+$(terms with $y$-degree $\le a-2$). But $F^*$ does not have a nonzero term of $y$-degree $a-1$, which implies 
 $ar=0$, 
 $r=0$, 
 $F^*=y^a$, 
 $\alpha=(0,\dots,0)$, 
a contradiction. 
 \end{proof}
 
The following lemma is used in the above proof. 
\begin{lemma}\label{lem:recurrence}
{\rm(a)} Assume $F=\sum_{i=0}^aF_i y^i$, $B=\sum_{i=-\infty}^b B_i y^i$ where $F_i, B_i\in K_\mathbf{d}$, and $F^{b/a}=B$. 
Then
$
\sum_{i=0}^a  \big( \frac{a+b}{a}i - j \big) F_i  B_{j-i}
=0
$. 

{\rm(b)} Assume $F$ is as in Conjecture \ref{main_conj5}, it induces $h_i$ as defined in \eqref{eq:h}. 
If the equality $[h_i]_{d_{h_i}}=0$ holds for $-1\ge i\ge -a+1$, then it holds for all $i<0$. 
\end{lemma}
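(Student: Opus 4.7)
\emph{Part (a)} will follow from the classical logarithmic-derivative trick: differentiating $F^{b/a} = B$ with respect to $y$ gives $\tfrac{b}{a} F^{b/a - 1} F' = B'$, and multiplying through by $F$ clears the fractional exponent to yield the polynomial identity $b F' B = a F B'$. Expanding $F = \sum_i F_i y^i$ and $B = \sum_\ell B_\ell y^\ell$ and equating the coefficient of $y^{j-1}$ on both sides gives $b \sum_i i F_i B_{j-i} = a \sum_i (j-i) F_i B_{j-i}$, which rearranges to the stated identity.

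For \emph{Part (b)}, the plan is to apply (a) to each summand $\tilde{B}^{(j)} := (\tilde{q}^a + \tilde{z})^{(b-j)/a}$ appearing in \eqref{eq:e and h}, combine the resulting recurrences weighted by $e_j$, and then pass to the top $x$-degree, where only $j = 0$ will survive. Taking the polynomial of (a) to be $\tilde{q}^a + \tilde{z}$ (whose $y$-coefficients are $p^{a(a-s)}$ in degree $a$, $\zeta^{a-s}$ in degree $s$, $\tilde{z}_i$ in degree $i<s$, and $0$ otherwise), part (a) applied with exponent $(b-j)/a$ yields, for each $j$ and $k$, the relation $\sum_{i=0}^a (\tfrac{a+b-j}{a} i - k)\,F_i\, \tilde{B}^{(j)}_{k-i} = 0$. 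Multiplying by $e_j$ and summing over $j$ produces
$$
\sum_i F_i \Bigl(\tfrac{a+b}{a} i - k\Bigr) \tilde{h}_{k-i} \;=\; \tfrac{1}{a} \sum_i i F_i \sum_{j\ge 1} j\, e_j\, \tilde{B}^{(j)}_{k-i}.
$$

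The decisive observation---which is the same $x$-degree estimate as in \eqref{eq:deg hi>=di}---is that, after renormalization via \eqref{eq:h}, the right-hand ``correction'' double sum contributes strictly below the top $x$-degree $d_{h_\ell}$ (with $\ell = k-a$): the strict inequality $sd_p - d_\zeta - k < 0$ guaranteed by \eqref{eq:kge} forces the maximum of $\deg_x$ inside $e_j (\tfrac{p^s}{\zeta})^j \prod_l \tilde{w}_l^{n_l}$ to be attained only at $j=0$ and $n_s=0$. Thus projecting the displayed relation onto $[\,\cdot\,]_{d_{h_\ell}}$---after solving the $i=a$ term of the $j=0$ recurrence for the top of $\tilde{B}^{(0)}_\ell$ and conjugating by the normalization---yields a clean $\mathbb{C}$-linear recurrence expressing $[h_\ell]_{d_{h_\ell}}$ in terms of $[h_{\ell+a-i}]_{d_{h_{\ell+a-i}}}$ for $i=0,1,\ldots,s$ alone.

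A descending induction on $\ell$ then finishes the argument: the base cases $\ell \in \{-1,\ldots,-a+1\}$ are given by hypothesis; at $\ell = -a$ the coefficient $-(\ell+a)F_0$ in front of $[h_0]_{d_{h_0}}$ vanishes, eliminating the one otherwise-problematic non-negative index (on which we have no hypothesis); and for $\ell < -a$ every index $\ell+a-i$ with $0 \le i \le s \le a-2$ is strictly negative and strictly larger than $\ell$, hence a base case or a previously completed step. The principal obstacle is the $x$-degree bookkeeping justifying that the $j \ge 1$ corrections lose degree strictly---this essentially replays the estimate already performed in \eqref{eq:deg hi>=di}, so the difficulty is technical rather than conceptual.
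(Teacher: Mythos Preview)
Your argument is correct and follows essentially the same route as the paper: both use the logarithmic-derivative recurrence from (a), project to the top $x$-degree using the same estimate as in \eqref{eq:deg hi>=di}, and then run a descending induction starting from the $a-1$ hypothesized zeros, using that the $i=0$ coefficient vanishes at $k=0$ to handle $\ell=-a$. The one substantive difference is how the $e_j$ $(j\ge 1)$ summands in \eqref{eq:e and h} are dispatched. The paper applies (a) directly with $B_i=\phi(\tilde{h}_i)$ and takes the degree-$d_j$ part of the resulting identity; strictly speaking (a) requires $B=F^{b/a}$, so this step tacitly uses that the $j\ge1$ contributions to $B_i$ sit below $d_{B_i}$ and hence disappear at top degree. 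You instead apply (a) to each exponent $(b-j)/a$ separately, sum, and explicitly isolate the $j\ge1$ residual before killing it by the same degree estimate. Your version makes this point cleaner; the paper's version is shorter but leaves that verification to the reader.
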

\begin{proof}
(a) 
From $F^{b/a}=B$, we get 
$
\frac{b}{a}B \frac{\partial F}{\partial y}
=
F \frac{\partial B}{\partial y}
$, and 
$
\frac{b}{a} (\sum_{i=-\infty}^b B_i y^i) (\sum_{i=0}^aF_i iy^{i-1})
=
 (\sum_{i=0}^aF_i y^i) (\sum_{i=-\infty}^b B_i iy^{i-1}) 
$. 
Take the coefficients of $y^{j-1}$, we get 
$
\frac{b}{a} \sum_{i=0}^a i F_i B_{j-i}
=
 \sum_{i=0}^a (j-i) F_i  B_{j-i}$, 
 thus $\sum_{i=0}^a  \big( \frac{a+b}{a}i - j \big) F_i B_{j-i}
=0$. 

(b) 
For $0\le i\le a-3$, let $B_i=\phi(\tilde{h}_i)$ and define 
$d_{B_i}:= (b-i)(d_\zeta+k) - (a(b-i)-(a-s)b)d_p$, 
$d_{F_i} := (a-i)(d_\zeta+k) -(s-i)ad_p$. 
Note that 
$
\deg_x (F_i) 
= \deg_x \phi(\tilde{z}_i) 
= \deg_x (\frac{\zeta^{a-i}}{p^{(s-i)a}}\tilde{w}_i) 
=(a-i)d_\zeta -(s-i)ad_p+v_i
\le (a-i)d_\zeta -(s-i)ad_p+k(a-i) = d_{F_i}$.
Moreover, if we assume $v_a=v_s=0$ and $v_{i}=-\infty$ for $s<i<a$, then the above inequality also holds for $s\le i\le a$. 
Also note that
$\deg_x(B_i)
= \deg_x \phi(\tilde{h}_i) 
= \deg_x \phi( \frac{ \zeta^{b-i}}{p^{a(b-i)-(a-s)b} } h_i )
\le (b-i)d_\zeta - (a(b-i)-(a-s)b)d_p + d_{h_i}
=d_{B_i} 
$ 
and that
$
[h_i]_{x^{d_{h_i}}}=[B_i]_{x^{d_{B_i}}}
$. 

Define the expected degree of $F_i B_{j-i} = F_i\phi(\tilde{h}_{j-i})$ to be 
$
d_j:= d_{F_i}+d_{B_{j-i}} =  (a-i)(d_\zeta+k) -(s-i)ad_p +
(b-j+i)(d_\zeta+k) - (a(b-j+i)-(a-s)b)d_p
=  (a+b-j)(d_\zeta+k) + (aj-as-bs)d_p$ 
which is independent of $i$. Then by (a), 
\begin{equation}\label{eq:FB recurrence}
\aligned
\sum_{i=0}^a  \big( \frac{a+b}{a}i - j \big) [F_i]_{d_{F_i}}  
[h_{j-i}]_{d_{h_{j-i}}}
&=\sum_{i=0}^a  \big( \frac{a+b}{a}i - j \big) [F_i]_{d_{F_i}}  
[B_{j-i}]_{d_{B_{j-i}}} \\
&= \Big[\sum_{i=0}^a  \big( \frac{a+b}{a}i - j \big) F_i  B_{j-i}\Big]_{d_j}
=0.\\
\endaligned
\end{equation}
For any $j<0$, the above coefficients $\frac{b}{a}i +i - j\neq0$. Also note that $[F_a]_{d_{F_a}}=x^m\neq0$. 
So \eqref{eq:FB recurrence} gives a recurrence relation on $[h_i]_{d_{h_i}}$ of order $a$.  Moreover, for $j=0$, \eqref{eq:FB recurrence} does not have the term corresponding to $i=0$ term, thus becomes 
$$ 
\big( \frac{a+b}{a} \big) [F_1]_{d_{F_1}}  
[h_{-1}]_{d_{h_{-1}}}
+
\cdots
+
\big( \frac{a+b}{a}a \big) [F_a]_{d_{F_a}} 
[h_{-a}]_{d_{h_{-a}}}=0.$$
So ``$[h_i]_{d_{h_i}}=0$ holds for $-1\ge i\ge -a+1$'' implies that it also holds for $i=-a$, thus it holds for all $i<0$ because the recurrence relation \eqref{eq:FB recurrence} has order $a$.
\end{proof}

\section{Appendix A: uniqueness of the principal polynomial}\label{appendixA}

Lemma~\ref{unique_principal} below will show that $\mathcal{W}(f)$ contains a unique principal polynomial up to roots of unity. Before that, we want to first prove Lemma \ref{maximal_factorization} using algebraic geometry.

We recall some simple facts from algebraic geometry. A polynomial $f\in\mathbb{C}[x,y]$ determines a homomorphism $f:\mathbb{C}[z]\to \mathbb{C}[x,y]$ sending $z$ to $f$, thus induces a morphism $\phi_f: \mathbb{A}^2\to \mathbb{A}^1$ (which is the polynomial function determined by $f$). Conversely, every morphism $\mathbb{A}^2\to \mathbb{A}^1$ is determined by a unique polynomial. 
The following are equivalent: ``$f$ is not a constant function'' $\Leftrightarrow$ ``$\phi_f$ is a non-constant morphism'' $\Leftrightarrow$  ``$\phi_f$ is surjective''. Similarly, $\g\in \mathbb{T}$ induces a proper and finite morphism $\phi_\g:\mathbb{A}^1\to\mathbb{A}^1$ with $\deg \phi_\g=\deg \g$, and $W\in \mathbb{C}[x,y]$ induces a morphism $\phi_W:\mathbb{A}^2\to \mathbb{A}^1$. The condition $f=\g(W)$ translates to  $\phi_f=\phi_\g\circ\phi_W$, which we call the \emph{factorization}  determined by $W$.

In the following lemma, for convenience of notation, for $f\in\mathbb{C}[x,y]$ we write $\phi_f$ simply as $f$ when no confusion should occur. By ``degree of $\phi_f$'', we mean the the $(1,1)$-degree of $f$. For $\g\in\mathbb{C}[z]$, we write $\phi_\g$ as $\g$; by ``degree of $\phi_\g$'' we mean the degree of $\g$. 

Given  a non-constant morphism  $f: \mathbb{A}^2\to\mathbb{A}^1$, we consider factorizations of $f$ of the form $f=\g\circ j$ with morphisms $j:\mathbb{A}^2\to\mathbb{A}^1$ and $\g:\mathbb{A}^1\to\mathbb{A}^1$. 
We say that two factorizations $f=\g_1\circ j_1$ and $f=\g_2\circ j_2$ are equivalent if there is an isomorphism $i:\mathbb{A}^1\to\mathbb{A}^1$ such that $\g_2=\g_1\circ i$ and $j_1=i\circ j_2$. We call $\deg(\g)$ the \emph{depth} of the factorization $f=\g\circ j$. We call $f=\g\circ j$ a deepest factorization of $f$ if there exists no other factorization of $f$ with a larger depth.

\begin{lemma}\label{maximal_factorization} Given a non-constant morphism $f: \mathbb{A}^2\to\mathbb{A}^1$, there is, up to isomorphism,  a unique universal factorization $f=\g_0\circ j_0$ in the sense that: if there is another factorization $f=\g\circ j$, then there is a morphism $i: \mathbb{A}^1\to \mathbb{A}^1$ such that $\g_0=\g\circ i$ and $j=i\circ j_0$, that is, the following diagram commutes. (Equivalently, $f=\g_0\circ j_0$ is a deepest factorization of $f$.)
$$\xymatrix{
\mathbb{A}^2\ar[rd]^(.7){j_0}\ar[rrrd]^{j}\ar@/_2pc/[ddrr]_f&\\
&\mathbb{A}^1\ar[rd]^(.6){\g_0}\ar[rr]^(.4){i}&&\mathbb{A}^1\ar[ld]^(.3){\g}\\ 
&&\mathbb{A}^1
}$$
\end{lemma}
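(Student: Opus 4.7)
The plan is to combine a depth bound with a ``common refinement'' construction. For a factorization $f=\gamma\circ j$, call $\deg\gamma$ its depth. Comparing leading terms gives $\deg(f)=\deg(\gamma)\cdot\deg(j)$, so depth divides $\deg f$ and is bounded; in particular deepest factorizations exist. I would then fix a factorization $f=\gamma_0\circ j_0$ of maximum depth and show it is the universal one.

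The heart of the argument is the following merging step. Given two factorizations $f=\gamma_1\circ j_1=\gamma_2\circ j_2$, I consider the subring $A=\mathbb{C}[j_1,j_2]\subseteq\mathbb{C}[x,y]$. Since each $j_k$ satisfies $\gamma_k(j_k)-f=0$, both $j_1$ and $j_2$ are algebraic over $\mathbb{C}(f)$, so $\mathrm{Frac}(A)$ has transcendence degree $1$ over $\mathbb{C}$. By the L\"uroth-type theorem that any subfield of $\mathbb{C}(x,y)$ of transcendence degree $1$ over $\mathbb{C}$ is purely transcendental (valid in characteristic zero, as such a field is the function field of a unirational curve and therefore rational), $\mathrm{Frac}(A)=\mathbb{C}(w)$ for some $w\in\mathbb{C}(x,y)$. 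Let $B$ be the integral closure of $A$ in $\mathbb{C}(w)$; since $\mathbb{C}[x,y]$ is a UFD and hence integrally closed, $B\subseteq\mathbb{C}[x,y]$. The ring $B$ is a normal, finitely generated $\mathbb{C}$-algebra of Krull dimension $1$ whose fraction field is $\mathbb{C}(\mathbb{P}^1)$, so $\mathrm{Spec}(B)=\mathbb{P}^1\setminus S$ for some nonempty finite set $S$, and $B^{\times}=\mathbb{C}^{\times}\cdot\mathbb{Z}^{|S|-1}$. But $B^{\times}\subseteq\mathbb{C}[x,y]^{\times}=\mathbb{C}^{\times}$ forces $|S|=1$, so $B=\mathbb{C}[j_3]$ for some polynomial $j_3\in\mathbb{C}[x,y]$. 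Consequently $f,j_1,j_2\in\mathbb{C}[j_3]$ produces polynomials $\gamma_3,i_1,i_2\in\mathbb{C}[z]$ with $f=\gamma_3(j_3)$, $j_k=i_k(j_3)$, and $\deg\gamma_3\geq\deg\gamma_k$.

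Applying the merging to $(\gamma_0,j_0)$ and any other factorization $(\gamma,j)$ produces $j_0=i_0(j_3)$, hence a factorization $f=(\gamma_0\circ i_0)(j_3)$ of depth $\deg\gamma_0\cdot\deg i_0$. Maximality of the depth of $(\gamma_0,j_0)$ forces $\deg i_0=1$, so $i_0$ is an affine isomorphism and $\mathbb{C}[j_0]=\mathbb{C}[j_3]\ni j$. Writing $j=i(j_0)$ and using that $j_0$ is transcendental over $\mathbb{C}$, we obtain $\gamma_0=\gamma\circ i$ as required. For uniqueness up to isomorphism, two universal factorizations supply via the universal property mutually inverse morphisms $\mathbb{A}^1\to\mathbb{A}^1$, which must be affine automorphisms.

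The main obstacle is the assertion that the normalization $B$ of $\mathbb{C}[j_1,j_2]$ is a polynomial ring. This step crucially uses both the L\"uroth theorem (to identify $\mathrm{Frac}(B)$ with $\mathbb{C}(\mathbb{P}^1)$) and the triviality of units in $\mathbb{C}[x,y]$ (to rule out any puncture besides a single point at infinity). Once this is in hand, the remainder of the argument reduces to degree bookkeeping and elementary comparisons of polynomial subrings of $\mathbb{C}[x,y]$.
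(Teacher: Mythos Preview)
Your proof is correct and, once translated, is the same construction as the paper's: the paper forms the fiber product of $\alpha_1,\alpha_2$ over $\mathbb{A}^1$, takes the irreducible component $X$ hit by $\mathbb{A}^2$, and normalizes to $\tilde{X}$; on the level of rings $\mathcal{O}(X)=\mathbb{C}[j_1,j_2]=A$ and $\mathcal{O}(\tilde{X})$ is its integral closure $B$, so your $\mathrm{Spec}(B)$ is precisely the paper's $\tilde{X}$. The paper's rationality argument (restrict to lines in $\mathbb{A}^2$ and invoke Riemann--Hurwitz) is the same content as your L\"uroth-type statement for transcendence-degree-one subfields of $\mathbb{C}(x,y)$.

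The one place your write-up is sharper is the identification $\tilde{X}\cong\mathbb{A}^1$. The paper deduces this directly from ``$\tilde{X}$ is a nonsingular affine rational curve,'' which as stated does not rule out $\mathbb{P}^1$ minus several points; your unit argument ($B^\times\subseteq\mathbb{C}[x,y]^\times=\mathbb{C}^\times$, forcing $|S|=1$) is exactly what is needed to close that step, and it works because the map $j_3:\mathbb{A}^2\to\tilde{X}$ is dominant. Otherwise the depth bound, the existence of a deepest factorization, and the final universality/uniqueness deductions match the paper's argument line for line.
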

\begin{proof}
Given any two factorizations $f=\g_1\circ j_1=\g_2\circ j_2$, we can construct a new factorization $f=\g_3\circ j_3$ which is either equivalent to one of the two factorizations, or deeper than them. Note that since $f$ is non-constant, $\g_1,\g_2,j_1,j_2$ must also be non-constant. 
$$\xymatrix{
&&\tilde{X}\ar[d]^\pi\ar@/_-10pc/[ddd]^{\g_3}\\
\mathbb{A}^2\ar[rd]^(.6){j_1}\ar[rrrd]^(.6){j_2}\ar@/_2pc/[ddrr]_f\ar[rr]^(.4)p\ar[rru]^{j_3}&&\hspace{33pt}X\subseteq X_{\rm fp} \ar[ld]^(.6){\beta_1}\ar[rd]^(.6){\beta_2}\\
&\mathbb{A}^1\ar[rd]^{\g_1}&&\mathbb{A}^1\ar[ld]_{\g_2}\\ 
&&\mathbb{A}^1
}$$

Let $X_{\rm fp}$ be the fiber product of $\g_1$ and $\g_2$. Then $f$ factors through $X_{\rm fp}$, and its image in $X_{\rm fp}$ must be irreducible. Let $X$ be the irreducible component of $X_{\rm fp}$ that contains the image of $f$. Then $f$ factors through $X$ in the sense that there is a morphism $p:\mathbb{A}^2\to X$ such that $f=\g_1\circ \beta_1\circ p=\g_2\circ \beta_2\circ p$. Let $\pi:\tilde{X}\to X$ be the normalization of $X$, then $p$ factors through $\tilde{X}$ in the sense that there is a morphism $j_3:\mathbb{A}^2\to X$ such that $p=\pi\circ j_3$. Define $\g_3=\g_1\circ \beta_1\circ \pi (=\g_2\circ \beta_2\circ \pi) : \tilde{X}\to \mathbb{A}^1$. Then $f=\g_3\circ j_3$.
Note that the curve $X$ is rational, otherwise each line in $\mathbb{A}^2$ must map to a point in $X$ because there is no non-constant rational map from a rational curve to a nonrational curve
\footnote{Here is a short sketch of proof: assume $f: C_1\to C_2$ is a non-constant rational map, where $C_1$ is a rational curve and $C_2$ is a nonrational curve. Without loss of generality we may assume that $C_1$ and $C_2$ are nonsingular projective curves. So ${\rm genus}(C_1)=0$,  ${\rm genus}(C_2)\ge1$.  
By \cite[II.2.1]{Silverman}, a rational map from a smooth projective curve to a projective curve is always a morphism. 
Thus $f$ is a morphism. Applying Riemann--Hurwitz formula to $f$, we get a contradiction $-2=2{\rm genus}(\mathbb{P}^1)-2=(\deg f)(2{\rm genus}(C_2)-2)+\sum (e_P-1)\ge 0$ where $e_P\ge1$ are ramification indices.
 }.
Since $X$ is rational and affine (but possibly singular), $\tilde{X}$ is a nonsingular affine rational curve, thus $\tilde{X}\cong\mathbb{A}^1$. Moreover since $\g_1$, $\g_1'$, and $\pi$ are all proper and non-constant, the morphism $\g_3$ is proper and non-constant. 
Note that $\deg \g_3\ge \deg \g_1$ (resp.~$\deg \g_2$), and equalities hold when the factorization $f=\g_3\circ j_3$ is equivalent to $f=\g_1\circ j_1$ (resp.~$f=\g_2\circ j_2$). 

Consider the set of all equivalence classes of the factorizations of $f$. Note that the depth of any factorization is no larger than the $(1,1)$-degree of the polynomial $f$, so there exists a deepest factorization $f=\g_0\circ j_0$ (that is, $\deg(\g_0)$ is maximal). By the above argument, we see that $f=\g_0\circ j_0$ is universal. The uniqueness of the factorization follows from the universal property.  
This completes the proof. 
\end{proof}

\begin{lemma}\label{unique_principal}
Let $f$ be a non-constant polynomial in $\mathcal{R}$. Then $\mathcal{W}(f)$ contains a unique principal polynomial up to roots of unity.
\end{lemma}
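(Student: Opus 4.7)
My plan is to derive the lemma from \Cref{maximal_factorization}. Applying that result to the non-constant morphism $\phi_f$ associated with $f$ yields, in polynomial language, a \emph{universal} factorization $f = \alpha_0(j_0)$ with $\alpha_0 \in \mathbb{C}[z]$ of maximal degree $k$ and $j_0 \in \mathcal{R}$, unique up to replacing $(\alpha_0, j_0)$ by $(\alpha_0 \circ i^{-1}, i(j_0))$ for an affine automorphism $i(z) = az + b$ of $\mathbb{A}^1$.

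First, I would normalize so that $\alpha_0 \in \mathbb{T}$. Multiplying $j_0$ by a $k$-th root of the leading coefficient of $\alpha_0$ monicizes $\alpha_0$, and then a shift $j_0 \mapsto j_0 - s$ kills its subleading coefficient. A brief computation with $\alpha_0'(z) = \alpha_0(a^{-1}(z - b))$ then shows that the residual freedom among normalized pairs $(\alpha_0, j_0) \in \mathbb{T} \times \mathcal{R}$ consists precisely of the rescalings $j_0 \mapsto a j_0$ with $a^k = 1$: monicity of $\alpha_0'$ forces $a^k = 1$, and vanishing of its $z^{k-1}$-coefficient forces $b = 0$. So the set of normalized universal $j_0$'s is a single $k$-th-roots-of-unity orbit.

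Next I would verify that each such $j_0$ is principal: if $j_0 = \beta(U)$ with $\beta \in \mathbb{T}$ of degree $\ell$ and $U$ non-constant, then $f = (\alpha_0 \circ \beta)(U)$ has depth $k\ell$, forcing $\ell = 1$ by maximality of $k$. Since the only degree-$1$ element of $\mathbb{T}$ is $z$, this gives $U = j_0$, whence $\mathcal{W}(j_0) = \{j_0\}$.

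For uniqueness, let $W \in \mathcal{W}(f)$ be principal with $f = \alpha(W)$, $\alpha \in \mathbb{T}$. The universal property supplies $i(z) \in \mathbb{C}[z]$ with $\alpha_0 = \alpha \circ i$ and $W = i(j_0)$. The key step is to rule out $\deg i \geq 2$: in that case, Tschirnhausenizing $W = i(j_0)$ produces $W = \tilde i(\tilde j_0)$ with $\tilde i \in \mathbb{T}$, $\deg \tilde i \geq 2$, and $\tilde j_0$ non-constant; \Cref{lem:unique} then forbids $\tilde j_0 = W$ (else $\tilde i(z) = z$), so $\tilde j_0 \in \mathcal{W}(W) \setminus \{W\}$, contradicting principality of $W$. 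Hence $i(z) = az + b$, and comparing the leading and $z^{k-1}$-coefficients in $\alpha_0(z) = \alpha(az + b)$ with $\alpha_0, \alpha \in \mathbb{T}$ yields $a^k = 1$ and $b = 0$. Thus $W = a j_0$ for some $k$-th root of unity $a$, matching one of the normalized $j_0$'s. The main obstacle will be the bookkeeping around the $k$-th-root-of-unity rescaling; the conceptual content is that depth-maximality together with the Tschirnhausen normalization pins $j_0$ down exactly up to this ambiguity.
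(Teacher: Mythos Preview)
Your proposal is correct and follows essentially the same approach as the paper: both derive the result from \Cref{maximal_factorization}, translate the universal (deepest) factorization into the Tschirnhausen-normalized form, and compute that the residual ambiguity after normalization is exactly a $k$-th root of unity. Your write-up is more explicit than the paper's (which simply asserts ``$W$ is principal if and only if it corresponds to the deepest factorization''), spelling out separately that the normalized $j_0$ is principal and that any principal $W$ must coincide with it up to a root of unity.
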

\begin{proof}
As explained before Lemma~\ref{maximal_factorization}, each $W\in \mathcal{W}(f)$ determines a factorization of $f$. Two polynomials $W_1,W_2\in \mathcal{W}(f)$ determine equivalent factorizations if and only if the corresponding $\g_1$ and $\g_2$ satisfy $\phi_{\g_1}=\phi_{\g_2}\circ i$ for an isomorphism $i:\mathbb{A}^1\to\mathbb{A}^1$. Then $i(z)=az+b$ for some $a,b\in \mathbb{C}$ with $a\neq0$. Since $\g_2\in\mathbb{T}$, we can write $\alpha_2=z^k+e_{k-2}z^{k-2}+\cdots+e_0$, and $\alpha_1=(az+b)^k+e_{k-2}(az+b)^{k-2}+\cdots+e_0=a^kz^k+ka^{k-1}bz^{k-1}+\cdots$. Then $\g_1\in\mathbb{T}$ implies $a^k=1, ka^{k-1}b=0$. It follows that $a$ must be a $(\deg \g_1)$-th root of unity, and the constant $b=0$. 

By Lemma~\ref{maximal_factorization}, $W$ is a principal polynomial if and only if it corresponds to the  deepest factorization $f=\g_0\circ j_0$ which must exist and is unique. By the previous paragraph, the corresponding polynomial $W\in \mathcal{W}(f)$ is uniquely determined up to a ($\deg \g_0$)-th root of unity. 
\end{proof}

\section{Appendix B: the extended Magnus formula}\label{section:Appendix B}

First, recall the definition of the multinomial coefficients. For $a\in\mathbb{R}$, $k\in\mathbb{Z}_{>0}$ and $m_1,\dots,m_k\in \mathbb{Z}_{\ge0}$, denoting the multinomial coefficient

\noindent $\binom{a}{m_1,m_2,\dots,m_k}=\binom{a}{m_1}\binom{a-m_1}{m_2}\cdots \binom{a-m_1-\cdots-m_{k-1}}{m_k}=\frac{a(a-1)(a-2)\cdots(a-m_1-\cdots-m_k+1)}{m_1!m_2!\cdots m_k!}$.

In \cite[Theorem 1]{Magnus1}, Magnus produced a formula which inspired much of the work for this paper. 
In \cite{HLLN}, we have proved the following theorem. 

\begin{theorem}\label{Magnus_thm}
Suppose $[F, G] \in  \mathbb{C}$. For any direction $w=(u,v)$, let $d=w\text{-}\deg(F_+)$ and $e=w\text{-}\deg(G_+)$. Write the $w$-homogeneous degree decompositions $F=\sum_{i\leq d} F_i$ and $G=\sum_{i\leq e} G_i$. Then 
there exists a unique \footnote{Note that there is some ambiguity of the notation $F_d^{1/r}$, since it is unique up to an $r$-th root of unity. We fix a choice of $F_d^{1/r}$. Then the fractional power $F_d^{c/r}:=(F_d^{1/r})^c$ is nonambiguous for any integer $c$.} sequence of constants $c_0,c_1,...,c_{d+e-u-v-1}\in \mathbb{C}$ such that $c_0\neq 0$ and 
\begin{equation}\label{Magnus_formula}
G_{e-\mu} = \sum_{\gamma=0}^{\mu} c_\gamma \sum {(e-\gamma)/d \choose  \nu_{\gamma,0},\, \nu_{\gamma,1},\, \dots,\, \nu_{\gamma,d-1}} F_d^{(e-\gamma)/d - \sum_{\alpha\le d-1} \nu_{\gamma,\alpha}} \prod_{\alpha\le d-1} F_\alpha^{\nu_{\gamma,\alpha}}
\end{equation}
for every integer $\mu\in\{0,1,...,d+e-u-v-1\}$, where the inner sum is to run over all combinations of non-negative integers $\nu_{\gamma,\alpha}$ satisfying 
$\sum_{\alpha\le d-1} (d-\alpha)\nu_{\gamma,\alpha}=\mu-\gamma$.  Furthermore, $c_\gamma=0$ if $r(e-\gamma)/d\notin\mathbb{Z}$, where $r\in\mathbb{Z}_{>0}$ is the largest integer such that $F_d^{1/r}\in \mathbb{C}[x,y]$. 
\end{theorem}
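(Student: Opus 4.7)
I would read \eqref{Magnus_formula} as extracting the $w$-homogeneous component of degree $e-\mu$ from a formal expansion
\[
G \;\approx\; \sum_{\gamma\ge 0} c_\gamma\, F^{(e-\gamma)/d},
\]
and construct the constants $c_\gamma$ by an iterated degree-reduction argument driven by the hypothesis $[F,G]\in\mathbb{C}$. First, I would pass to a Puiseux-style completion of $\mathcal{R}$ with respect to the $w$-grading in which a primitive root $\phi:=F_d^{1/r}$ has been adjoined and inverted. In this ring the formal binomial expansion
\[
F^{(e-\gamma)/d} \;=\; \sum_{\nu_0,\ldots,\nu_{d-1}\ge 0}\binom{(e-\gamma)/d}{\nu_0,\ldots,\nu_{d-1}}\,F_d^{(e-\gamma)/d-\sum_\alpha\nu_\alpha}\prod_{\alpha\le d-1}F_\alpha^{\nu_\alpha}
\]
is well-defined, and its $w$-degree-$(e-\mu)$ component is exactly the inner sum on the right-hand side of \eqref{Magnus_formula}. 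A chain-rule computation shows $[F,F^{c}]=0$ for every rational $c$ in this ring.

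Next, I would set $H^{(0)}:=G$ and inductively define $H^{(\mu+1)}:=H^{(\mu)}-c_\mu F^{(e-\mu)/d}$, with $c_\mu$ chosen so that the top $w$-homogeneous component of $H^{(\mu+1)}$ vanishes. By induction $[F,H^{(\mu)}]=[F,G]\in\mathbb{C}$. For $\mu\le d+e-u-v-1$ the leading $w$-homogeneous part of $[F,H^{(\mu)}]$ has $w$-degree $d+(e-\mu)-u-v\ge 1$, strictly positive, hence must be zero since $[F,G]$ is a constant. This gives
\[
[F_d,\,(H^{(\mu)})_{e-\mu}]\;=\;0.
\]
The classical fact that two $(u,v)$-homogeneous polynomials with vanishing Jacobian bracket both lie in a common $\mathbb{C}[\psi]$, applied to $F_d=\phi^r$, forces $(H^{(\mu)})_{e-\mu}$ to be a scalar multiple of $\phi^{r(e-\mu)/d}$ when $r(e-\mu)/d\in\mathbb{Z}$ and to vanish otherwise. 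A unique $c_\mu\in\mathbb{C}$ annihilates this top component; moreover $c_0\ne 0$ because $G_e\ne 0$ by definition of $e$, and the vanishing of $c_\gamma$ whenever $r(e-\gamma)/d\notin\mathbb{Z}$ is built into this construction.

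The main obstacle will be the rigidity lemma underlying the whole argument: every $(u,v)$-homogeneous $K$ with $[F_d,K]=0$ lies in $\mathbb{C}[\phi]$. This requires a clean Puiseux-type factorization of $F_d$ into ``linear'' $w$-homogeneous pieces together with careful bookkeeping when $u$ or $v$ is zero or negative, where some factors collapse to monomials. A secondary subtlety is checking that after the choice of $c_\mu$ the subtraction indeed reduces the $w$-degree strictly, i.e.\ that the degree-$(e-\mu)$ piece of the binomial expansion equals $c_\mu\phi^{r(e-\mu)/d}$ up to lower-order terms. Finally, the cutoff $\mu\le d+e-u-v-1$ is sharp: at $\mu=d+e-u-v$ the leading bracket lives in $w$-degree $0$ and need not vanish, so the Jacobian constraint no longer pins down $c_\mu$, which is exactly why the theorem stops there.
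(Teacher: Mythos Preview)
Your proposal is correct and matches the paper's approach. The paper cites this theorem from \cite{HLLN}, but its proof of the extended version in Appendix~B (Theorem~\ref{prop:Magnus_formula_extended}) follows exactly your inductive degree-reduction: set $H_{e-\mu}=G_{e-\mu}-\sum_{\gamma<\mu} c_\gamma[\widetilde{F}^{(e-\gamma)/d}]_{t^{\mu-\gamma}}$, use $[\widetilde{F},\widetilde{F}^{(e-\gamma)/d}]=0$ (Lemma~\ref{lem:Jac}) together with $[F,G]\in\mathbb{C}$ to obtain $[H_{e-\mu},F_d]=0$, and then invoke the rigidity lemma (Lemma~\ref{lem:top}) to extract the unique $c_\mu$, with $c_\mu=0$ forced when $r(e-\mu)/d\notin\mathbb{Z}$.
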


We start by reinterpreting  the equality \eqref{Magnus_formula}. 
By the generalized multinomial theorem,  
for $x_1,\dots,x_n\in \mathcal{R}$, we have the following expansion in the ring $\mathcal{R}[[t]]$:
$$(1+x_1t+\cdots+x_{n}t^n)^A=\sum_{v_1,\dots,v_n\in\mathbb{Z}_{\ge0}}{A\choose v_1,v_2,\dots,v_n}x_1^{v_1}\cdots x_n^{v_n}t^{v_1+2v_2+\cdots+nv_n}.$$
In general, for $x_0,\dots,x_n\in \mathcal{R}$ and for $A=a/b$ where $a\in\mathbb{Z}$, $b\in\mathbb{Z}_{>0}$,   
we have the following identity in the ring $\mathcal{R}[x_0^{\pm 1/b}][[t]]$ (where we fix a choice of $x_0^{1/b}$): 
\begin{equation}\label{eq:xA}
(x_0+x_1t+\cdots+x_{n}t^n)^A=\sum_{v_1,\dots,v_n\in\mathbb{Z}_{\ge0}}
{A\choose v_1,v_2,\dots,v_n}
x_0^{A-\sum_{i=1}^n v_i}x_1^{v_1}\cdots x_n^{v_n}t^{v_1+\cdots+nv_n}.
\end{equation}

\begin{lemma}\label{reinterpret Magnus formula}
The equality \eqref{Magnus_formula} can be rewritten as the following equality in $\mathcal{R}[F_d^{\pm 1/d}]$:
\begin{equation}\label{eq:gemu_variation}
\aligned
G_{e-\mu} 
&= \sum_{\gamma=0}^{\mu} c_\gamma 
\bigg[\bigg(F_d+F_{d-1}t+F_{d-2}t^2+\cdots\bigg)^{(e-\gamma)/d}\bigg]_{t^{\mu-\gamma}}\\
\endaligned
\end{equation}
\end{lemma}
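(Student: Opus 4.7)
The plan is to derive \eqref{eq:gemu_variation} directly from \eqref{Magnus_formula} by applying the generalized multinomial expansion \eqref{eq:xA} and relabeling indices. Note that the polynomials $F_\alpha$ for $\alpha > d$ or $\alpha < 0$ can be taken to be zero, so the series $F_d + F_{d-1}t + F_{d-2}t^2 + \cdots$ is really the finite sum $\sum_{i=0}^{d} F_{d-i}\, t^i$.

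First I would apply \eqref{eq:xA} with $n=d$, $A=(e-\gamma)/d$, $x_0 = F_d$, and $x_i = F_{d-i}$ for $1 \le i \le d$. This gives the identity
\begin{equation*}
\bigl(F_d + F_{d-1}t + \cdots + F_0 t^d\bigr)^{(e-\gamma)/d} = \sum_{v_1,\dots,v_d\ge 0} \binom{(e-\gamma)/d}{v_1,\dots,v_d}\, F_d^{(e-\gamma)/d - \sum_{i=1}^d v_i}\, \prod_{i=1}^{d} F_{d-i}^{v_i}\, t^{v_1+2v_2+\cdots+dv_d}
\end{equation*}
in $\mathcal{R}[F_d^{\pm 1/d}][[t]]$. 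Extracting the coefficient of $t^{\mu-\gamma}$ then gives a sum over tuples $(v_1,\dots,v_d)\in\mathbb{Z}_{\ge 0}^d$ satisfying $v_1+2v_2+\cdots+dv_d = \mu-\gamma$.

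The next step is the reindexing $\nu_{\gamma,\alpha} := v_{d-\alpha}$ for $\alpha = 0,1,\dots,d-1$. Under this bijection, the exponent constraint translates exactly to $\sum_{\alpha\le d-1}(d-\alpha)\nu_{\gamma,\alpha} = \mu-\gamma$, and the product $\prod_{i=1}^d F_{d-i}^{v_i}$ becomes $\prod_{\alpha \le d-1} F_{\alpha}^{\nu_{\gamma,\alpha}}$; the multinomial coefficient and the exponent of $F_d$ are unchanged since $\sum_{i} v_i = \sum_\alpha \nu_{\gamma,\alpha}$. Multiplying through by $c_\gamma$ and summing over $\gamma$ from $0$ to $\mu$ yields the inner sum of \eqref{Magnus_formula}.

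There is essentially no obstacle: the lemma is a purely bookkeeping identity, and the only thing to verify carefully is that both sides are interpreted consistently in the ring $\mathcal{R}[F_d^{\pm 1/d}]$. The footnote to Theorem~\ref{Magnus_thm} fixes a choice of $F_d^{1/r}$; setting $F_d^{1/d} := (F_d^{1/r})^{r/d}$ is compatible with this choice whenever the coefficient $c_\gamma$ is nonzero (which by the theorem forces $r(e-\gamma)/d\in\mathbb{Z}$, hence $F_d^{(e-\gamma)/d - \sum v_i}$ is an honest element of $\mathcal{R}[F_d^{\pm 1/r}]\subseteq \mathcal{R}[F_d^{\pm 1/d}]$), so no ambiguity arises.
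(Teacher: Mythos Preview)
Your argument is correct and is exactly the unpacking the paper intends: the lemma is stated in the paper without proof, immediately after the multinomial expansion \eqref{eq:xA}, because it is meant to be a direct consequence of that expansion together with the reindexing $\nu_{\gamma,\alpha}=v_{d-\alpha}$ that you carry out.

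One minor remark: your assertion that $F_\alpha=0$ for $\alpha<0$ is not literally true for an arbitrary direction $w=(u,v)$ (for instance if $v<0$ a polynomial can have components of negative $w$-degree), so the $t$-series need not stop at $t^d$. This does not affect your proof: since $F$ is a polynomial it has only finitely many nonzero $w$-homogeneous components, so the series is still a polynomial in $t$; you may simply take $n$ to be its actual $t$-degree and run the same reindexing. Your handling of the fractional power is also fine, since $r\mid d$ and the condition $r(e-\gamma)/d\in\mathbb{Z}$ whenever $c_\gamma\neq0$ ensures every surviving term lies in $\mathcal{R}[F_d^{\pm1/r}]\subseteq\mathcal{R}[F_d^{\pm1/d}]$.
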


In \cite{HLLN}, we proved the following statement, which is equivalent to Theorem \ref{Magnus_thm}.
\begin{theorem}\label{prop:Magnus_formula_equivalence}
Suppose $[F, G] \in  \mathbb{C}$. For any direction $w=(u,v)$, let $d=w\text{-}\deg(F_+)$ and $e=w\text{-}\deg(G_+)$.  Assume $d>0$. Write the $w$-homogeneous  decompositions $F=\sum_{i\leq d} F_i$ and $G=\sum_{i\leq e} G_i$. Define 
$$\widetilde{F}=\ff_d+\ff_{d-1}t+\cdots \quad\text{and}\quad \widetilde{G}=G_e+G_{e-1}t+\cdots.$$
Let $r\in\mathbb{Z}_{>0}$ be the largest integer such that $F_d^{1/r}\in \mathbb{C}[x,y]$. 
Then there exists a unique sequence of constants $c_0,c_1,\dots,c_{d+e-u-v-1}\in \mathbb{C}$ such that $c_0\neq 0$ and 
\begin{equation}\label{Magnus_formula_equivalence}
G_{e-\mu} = \sum_{\gamma=0}^{\mu} c_\gamma [\widetilde{F}^{\frac{e-\gamma}{d}}]_{t^{\mu-\gamma}}
\end{equation}
for every integer $\mu\in\{0,1,...,d+e-u-v-1\}$. 
Moreover, $c_\gamma=0$ if $r(e-\gamma)/d\notin\mathbb{Z}$. 
\end{theorem}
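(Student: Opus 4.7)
All of the content of this theorem is already present in Theorem \ref{Magnus_thm} (proved in \cite{HLLN}); the task is to show that \eqref{Magnus_formula_equivalence} and \eqref{Magnus_formula} are term-by-term the \emph{same} identity after reindexing. Concretely, the plan is to prove Lemma \ref{reinterpret Magnus formula} from the generalized multinomial expansion \eqref{eq:xA}; then Theorem \ref{prop:Magnus_formula_equivalence}, together with its uniqueness of the $c_\gamma$ and the vanishing criterion $c_\gamma=0$ when $r(e-\gamma)/d\notin\mathbb Z$, is immediate from Theorem \ref{Magnus_thm}.

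To prove the rewrite, apply \eqref{eq:xA} with $x_0=F_d$, $x_j=F_{d-j}$ for $j\ge 1$, and $A=(e-\gamma)/d$. Since $F$ is a polynomial, only finitely many $F_{d-j}$ are nonzero, so the expansion makes sense inside $\mathcal{R}[F_d^{\pm 1/d}][[t]]$, and it reads
$$
\widetilde{F}^{(e-\gamma)/d}
=\sum_{v_1,v_2,\ldots\ge 0}
\binom{(e-\gamma)/d}{v_1,v_2,\ldots}
F_d^{(e-\gamma)/d-\sum_{j\ge 1}v_j}\prod_{j\ge 1} F_{d-j}^{v_j}\, t^{\sum_j j v_j}.
$$
Extracting the coefficient of $t^{\mu-\gamma}$ restricts to multi-indices with $\sum_j j v_j=\mu-\gamma$. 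Now reindex via $\alpha:=d-j$, $\nu_{\gamma,\alpha}:=v_{d-\alpha}$: this bijects $\{j\ge 1\}$ with $\{\alpha\le d-1\}$, turns $\sum_j jv_j=\mu-\gamma$ into $\sum_{\alpha\le d-1}(d-\alpha)\nu_{\gamma,\alpha}=\mu-\gamma$, and sends $F_{d-j}^{v_j}$ to $F_\alpha^{\nu_{\gamma,\alpha}}$. The summand is therefore identical to the inner summand of \eqref{Magnus_formula}. Multiplying by $c_\gamma$ and summing over $\gamma\in\{0,\ldots,\mu\}$ identifies \eqref{Magnus_formula_equivalence} with \eqref{Magnus_formula}.

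Finally, Theorem \ref{Magnus_thm} fixes a choice of $F_d^{1/r}$ and declares $c_\gamma=0$ unless $r(e-\gamma)/d\in\mathbb Z$, in which case $d\mid e-\gamma$ and the rational power $F_d^{(e-\gamma)/d}$ that appears in both \eqref{Magnus_formula} and \eqref{Magnus_formula_equivalence} is interpreted compatibly. The uniqueness of $c_0,\dots,c_{d+e-u-v-1}$ in the new formulation therefore transfers directly from that of Theorem \ref{Magnus_thm}. There is no genuine obstacle here: the argument is pure bookkeeping, and the ``hard part'' (producing the $c_\gamma$ from the Jacobian condition $[F,G]\in\mathbb{C}$) is what was already carried out in \cite{HLLN}.
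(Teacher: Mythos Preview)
Your approach is correct and matches the paper's: the paper states Lemma~\ref{reinterpret Magnus formula} (without proof) and then simply asserts that Theorem~\ref{prop:Magnus_formula_equivalence} is equivalent to Theorem~\ref{Magnus_thm}, proved in \cite{HLLN}; you have supplied exactly the bookkeeping that makes this equivalence explicit. One small slip: the clause ``in which case $d\mid e-\gamma$'' is false (e.g.\ $d=4$, $r=2$, $e-\gamma=2$); what you need is only that $r(e-\gamma)/d\in\mathbb{Z}$ makes $F_d^{(e-\gamma)/d}=(F_d^{1/r})^{r(e-\gamma)/d}$ a well-defined integer power of the polynomial $F_d^{1/r}$, which is enough for compatibility.
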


When the direction is  $w=(0,1)$, we strengthen the theorem to an ``extended'' Magnus' formula by including an assertion for $\mu=d+e-u-v=d+e-1$.

\begin{theorem}\label{prop:Magnus_formula_extended}
Suppose $[F, G] \in  \mathbb{C}$. Let $w=(0,1)$, and let $d=w\text{-}\deg(F_+)=n$ and $e=w\text{-}\deg(G_+)$.  Assume $d>0$. Write the $w$-homogeneous  decompositions $F=\sum_{i\leq d} F_i$ and $G=\sum_{i\leq e} G_i$ , and assume $F_d=x^my^n$. Define 
$$\widetilde{F}=\ff_d+\ff_{d-1}t+\cdots \quad\text{and}\quad \widetilde{G}=G_e+G_{e-1}t+\cdots.$$
Let $r=\gcd(m,n)$. 
Then there exists a unique sequence of constants $c_0,c_1,\dots,c_{d+e-2}\in \mathbb{C}$ such that $c_0\neq 0$,  
\eqref{Magnus_formula_equivalence}
for every integer $\mu\in\{0,1,...,d+e-2\}$, and 
\begin{equation}\label{Magnus_formula_extended}
\frac{\lambda}{x^{m-1}y^{n-1}} = \sum_{\gamma=0}^{d+e-1} c_\gamma [\widetilde{F}^{\frac{e-\gamma}{d}}]_{t^{d+e-1-\gamma}}
\end{equation} 
for some constant $\lambda\in\mathbb{C}$. 
Moreover, $c_\gamma=0$ if $r(e-\gamma)/d\notin\mathbb{Z}$. 
\end{theorem}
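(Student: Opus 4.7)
The first assertion of the theorem—existence of unique $c_0,\dots,c_{d+e-2}$ with $c_0\neq 0$ satisfying \eqref{Magnus_formula_equivalence} for $\mu\in\{0,\dots,d+e-2\}$, along with the moreover clause for those indices—is exactly Theorem~\ref{prop:Magnus_formula_equivalence} specialized to $w=(0,1)$ (noting $u+v=1$, so $d+e-u-v-1=d+e-2$). So only \eqref{Magnus_formula_extended}, the identity at $\mu=d+e-1$ involving an additional coefficient $c_{d+e-1}$ and a residue constant $\lambda$, requires separate work. Let $\mu_0:=[F,G]\in\mathbb{C}$.

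The plan is to interpret the Magnus expansion formally and extract a residue at the bottom $(0,1)$-degree $1-d$ coming from $\mu_0$. I will work in a completion of $\mathcal{R}[F_d^{\pm 1/d}]$ in descending powers of $y$, and for each $A\in\mathbb{Q}$ set
$$F^A \;:=\; F_d^A\bigl(1+(F-F_d)/F_d\bigr)^A \;=\; \sum_{k\geq 0}\binom{A}{k}F_d^{A-k}(F-F_d)^k.$$
The Leibniz rule yields $[F,F^A]=0$ termwise, hence $\bigl[F,\sum_{\gamma=0}^{d+e-2} c_\gamma F^{(e-\gamma)/d}\bigr]=0$, and the residual
$$R \;:=\; G\;-\;\sum_{\gamma=0}^{d+e-2} c_\gamma\, F^{(e-\gamma)/d}$$
satisfies $[F,R]=\mu_0$. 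By Theorem~\ref{prop:Magnus_formula_equivalence}, the $(0,1)$-components of $R$ of $(0,1)$-degree $\geq 2-d$ vanish; comparing top $(0,1)$-degrees in $[F,R]=\mu_0$, using that $[F_d,R_{\rm top}]$ has $(0,1)$-degree $d+\deg_y(R_{\rm top})-1$ which must equal $0$, forces the leading component of $R$ to sit at $(0,1)$-degree exactly $1-d$ and to satisfy the ODE $[F_d,R_{1-d}]=\mu_0$. Writing $R_{1-d}=f(x)\,y^{1-n}$, the equation becomes $m(1-n)x^{m-1}f(x)-nx^m f'(x)=\mu_0$, with general Laurent-polynomial-in-$x$ solution (for $m\neq n$)
$$f(x) \;=\; \frac{\mu_0}{(m-n)\,x^{m-1}} \;+\; D\,x^{m(1-n)/n},$$
where the homogeneous piece $D\,x^{m(1-n)/n}y^{1-n}=D\,F_d^{(1-d)/d}$ lies in our ring iff $n\mid m$ (equivalently $r=n$). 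Let $D_0$ be the actual value occurring in the specific $R_{1-d}$ determined above (automatically $D_0=0$ when $n\nmid m$, since then $F_d^{(1-d)/d}$ is not in our ring).

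Setting $c_{d+e-1}:=D_0$—forced to be $0$ when $n\nmid m$, matching the moreover clause since $r(1-d)/d\in\mathbb{Z}\iff n\mid m$—the new Magnus summand $c_{d+e-1}F^{(1-d)/d}$ cancels the homogeneous piece of $R_{1-d}$ precisely, yielding the clean residue
$$-R_{1-d}+c_{d+e-1}F_d^{(1-d)/d} \;=\; \frac{\mu_0}{(n-m)\,x^{m-1}y^{n-1}}.$$
Since $G_{1-d}=0$ for $d\geq 2$, the definition of $R$ gives $-R_{1-d}=\sum_{\gamma=0}^{d+e-2} c_\gamma [\widetilde F^{(e-\gamma)/d}]_{t^{d+e-1-\gamma}}$, so the left-hand side above equals $\sum_{\gamma=0}^{d+e-1} c_\gamma [\widetilde F^{(e-\gamma)/d}]_{t^{d+e-1-\gamma}}$, giving \eqref{Magnus_formula_extended} with $\lambda=\mu_0/(n-m)$. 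The main obstacle will be making the formal-series framework genuinely rigorous—defining the ring in which both $G$ and every $F^{(e-\gamma)/d}$ live, justifying the termwise identity $[F,F^A]=0$, and ensuring compatibility with the $(0,1)$-homogeneous decomposition—together with isolating the degenerate cases, notably $m=n$ (where the ODE has no Laurent-polynomial solution unless $\mu_0=0$, so one must verify that the hypotheses force $[F,G]=0$ and the claim holds with $\lambda=0$) and small $d$ (where $G_{1-d}$ need not vanish and the definition of $R$ must be slightly adjusted).
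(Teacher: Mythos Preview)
Your approach is essentially the paper's: both reduce the new case $\mu=d+e-1$ to solving $[F_d,X]=[F,G]$ for an element $X$ of $(0,1)$-degree $1-d$, using the formal identity $[F,F^A]=0$ (which is precisely Lemma~\ref{lem:Jac}, stated there with the auxiliary variable $t$). The paper packages this as an induction on $\mu$, computing $[H_{e-\mu},F_d]=-\lambda_\mu$ term by term, while you work with the full residual $R$ at once; the content is identical, and your value $\lambda=\mu_0/(n-m)$ agrees with the paper's.

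There is one concrete slip. You place $R$ in a completion of $\mathcal{R}[F_d^{\pm 1/d}]$ and then argue that $F_d^{(1-d)/d}$ lies in this ring only when $n\mid m$, forcing $D_0=0$ otherwise. But $F_d^{(1-d)/d}=(F_d^{1/d})^{1-d}$ is \emph{always} in $\mathcal{R}[F_d^{\pm 1/d}]$, so that argument fails as written. The repair is to use the ``moreover'' clause of Theorem~\ref{prop:Magnus_formula_equivalence} that you already cite: since $c_\gamma=0$ whenever $r(e-\gamma)/d\notin\mathbb{Z}$, every surviving $F^{(e-\gamma)/d}$ in the definition of $R$ lies in the \emph{smaller} completion built on $F_d^{\pm 1/r}$ with $r=\gcd(m,n)$. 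In that ring all $x$-exponents are integers (because $r\mid m$), so $R_{1-d}=g(x)\,y^{1-n}$ with $g\in\mathbb{C}[x,x^{-1}]$; meanwhile $F_d^{(1-d)/d}$ has $x$-exponent $m(1-n)/n$, which is an integer exactly when $n\mid m$. Now $D_0=0$ for $n\nmid m$ does follow. The degenerate cases you flag ($m=n$ and $d=1$) are likewise not addressed in the paper's own proof, which tacitly divides by $n-m$ and uses $G_{1-d}=0$.
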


Recall a useful lemma, which appears in 
\cite[Propositions 1,2]{NaBa}, \cite[Lemma 22]{ApOn}, \cite[p258]{Magnus1},
and was also proved in \cite{HLLN}. 

\begin{lemma}\label{lem:top}
Let $w=(u,v)\in W$. 
Let $R$ be any polynomial ring over $K$, $f\in R$ be a $w$-homogeneous polynomial of degree $d_f>0$, and $g$ be a nonzero $w$-homogeneous function of degree $d_g\in\mathbb{Z}$ in the fractional field of $R$ such that the Jacobian $[g,f]=0$. 
Define $r\in\mathbb{Z}_{>0}$ to be the largest integer such that $h=f^{1/r}$ is a polynomial.
Then there exists a unique $c\in K\setminus\{0\}$ so that $g=c\cdot h^s$, where $s=rd_g/d_f$ is an integer. 
\end{lemma}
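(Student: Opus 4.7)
The plan is to change coordinates to exploit the $w$-homogeneity, reducing the partial differential equation $[g,f]=0$ to a separable one-variable ODE that can be integrated explicitly.

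First I would introduce variables adapted to $w$: since $\gcd(u,v)=1$, choose $p,q\in\mathbb{Z}$ with $pu+qv=1$, and set $t=x^py^q$ (of $w$-degree $1$) and $s=x^{-v}y^u$ (of $w$-degree $0$). A direct check yields $x=t^us^{-q}$ and $y=t^vs^p$, so $K(x,y)=K(s,t)$ as fields, and every $w$-homogeneous element of $K(x,y)$ of degree $d$ has the unique form $t^dF(s)$ for some $F\in K(s)$. Write $f=t^{d_f}F(s)$ and $g=t^{d_g}G(s)$.

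Next I would transport the Jacobian. Computing directly,
\[
s_xt_y-s_yt_x=-(pu+qv)\,x^{p-v-1}y^{u+q-1}=-x^{p-v-1}y^{u+q-1}\neq 0,
\]
and the chain rule gives $[g,f]_{(x,y)}=(s_xt_y-s_yt_x)(g_sf_t-g_tf_s)$. Thus $[g,f]=0$ is equivalent to $g_sf_t-g_tf_s=0$. Substituting the forms of $f$ and $g$ and factoring out $t^{d_f+d_g-1}$ leaves
\[
d_fF(s)G'(s)\;=\;d_gG(s)F'(s),
\]
a separable ODE whose solutions are $G(s)=c\,F(s)^{d_g/d_f}$ for some $c\in K\setminus\{0\}$, interpreted as an identity of algebraic functions of $s$.

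Transporting back, $g=t^{d_g}G(s)=c\,t^{d_g}F(s)^{d_g/d_f}=c\,f^{d_g/d_f}$. To finish, I would observe that $f=c_fh^r$ and that $h$ is not a proper power of a rational function in the UFD $R$: if $h=(A/B)^n$ in lowest terms with $n\geq 2$, then $B^nh=A^n$ forces $B$ to be a unit, and then $h=A^n$ contradicts the maximality of $r$. Therefore $c^{-1}g=f^{d_g/d_f}=c_f^{d_g/d_f}h^{rd_g/d_f}$ lies in $K(x,y)$ only when $s:=rd_g/d_f\in\mathbb{Z}$, yielding $g=c\,h^s$ after absorbing the constant. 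Uniqueness of $c$ is immediate since $h^s\neq 0$. The main obstacle I anticipate is the bookkeeping of the change-of-variables step — verifying $K(x,y)=K(s,t)$, computing the monomial Jacobian, and confirming the $t^dF(s)$ normal form — but once this is in place, the separable ODE and the UFD argument are both elementary and no deeper result (such as Theorem~\ref{thm:N0}) is needed.
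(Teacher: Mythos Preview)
The paper does not actually give its own proof of this lemma: it only records the statement and cites \cite{NaBa}, \cite{ApOn}, \cite{Magnus1}, and \cite{HLLN} for the argument. So there is no in-paper proof to compare against.

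Your approach is essentially the classical one (going back to Magnus): pass to coordinates $(s,t)$ adapted to the weight, reduce $[g,f]=0$ to the one-variable identity $d_fFG'=d_gGF'$, integrate, and then use unique factorization to pin down the exponent. The structure is sound. Two small clean-ups are worth noting. First, by definition $h=f^{1/r}$, so $f=h^r$ exactly and your constant $c_f$ is $1$; this is cosmetic. Second, the step ``$G=cF^{d_g/d_f}$ as algebraic functions'' is better phrased as follows: from $d_fFG'=d_gGF'$ one has $(G^{d_f}F^{-d_g})'=0$ in $K(s)$, hence $G^{d_f}=c'F^{d_g}$ for some $c'\in K\setminus\{0\}$ (this uses characteristic zero), and therefore $g^{d_f}=c'f^{d_g}=c'h^{rd_g}$ in $K(x,y)$. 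The UFD step then reads cleanly: writing $g=A/B$ in lowest terms and comparing prime factorizations in $K[x,y]$ forces every prime of $g$ to be a prime of $h$, with exponents scaled by $rd_g/d_f$; since the exponents in $h$ have $\gcd$ equal to $1$ (by maximality of $r$), the ratio $s=rd_g/d_f$ is an integer and $g=ch^{s}$. Phrasing it this way avoids ever writing a noninteger power of a rational function.
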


\begin{lemma}\label{lem:Jac}
Suppose $F,$ is a nonzero polynomial and $d=\deg(F_+)$ and $e=\deg(G_+)$.
	Write the homogeneous  decompositions $F=\sum_{i=0}^{d} F_i$, and define
$\widetilde{F}=\ff_d+\ff_{d-1}t+\cdots +\ff_0 t^d$. 
Then $[\widetilde{F},\widetilde{F}^{\frac{e-\gamma}{d}}]=0$
whenever $\frac{e-\gamma}{d} = \ell \cdot \frac{1}{r}$ for $\ell\in \Z$.
\end{lemma}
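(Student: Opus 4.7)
The plan is to reduce the statement to a chain-rule calculation by realizing the fractional power $\widetilde{F}^{(e-\gamma)/d}$ as an integer power of a well-defined element of a formal power series ring. Under the hypothesis $(e-\gamma)/d = \ell/r$ with $\ell \in \Z$, every coefficient $F_d^{(e-\gamma)/d - \sum v_i}$ appearing in the expansion \eqref{eq:xA} is of the form $F_d^{c/r}$ for some $c \in \Z$, so $\widetilde{F}^{(e-\gamma)/d}$ lives naturally inside the ring $\mathcal{R}[F_d^{\pm 1/r}][[t]]$.

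First I would factor $\widetilde{F} = F_d\bigl(1 + \sum_{i=1}^{d}(F_{d-i}/F_d)\,t^i\bigr)$ and apply the generalized binomial theorem to define
\[
W := F_d^{1/r}\,\bigl(\widetilde{F}/F_d\bigr)^{1/r} \in \mathcal{R}[F_d^{\pm 1/r}][[t]],
\]
the unique element with $t$-constant term $F_d^{1/r}$ satisfying $W^r = \widetilde{F}$. A term-by-term comparison with \eqref{eq:xA} identifies $W^\ell$ with the expression $\widetilde{F}^{(e-\gamma)/d}$ intended in the lemma. I would then differentiate the algebraic identity $W^r = \widetilde{F}$ termwise with respect to $x$ in the formal power series ring (where $\partial_x$ is a continuous derivation and the ordinary Leibniz rule holds), obtaining $r W^{r-1}\,\partial_x W = \partial_x \widetilde{F}$, hence $\partial_x W = \tfrac{1}{r} W^{1-r}\,\partial_x \widetilde{F}$, and similarly for $\partial_y$. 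Iterating the Leibniz rule yields $\partial_x(W^\ell) = \tfrac{\ell}{r} W^{\ell-r}\,\partial_x \widetilde{F}$ and $\partial_y(W^\ell) = \tfrac{\ell}{r} W^{\ell-r}\,\partial_y \widetilde{F}$.

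Plugging these into the Jacobian,
\[
[\widetilde{F}, \widetilde{F}^{(e-\gamma)/d}] = \partial_x \widetilde{F}\cdot \partial_y(W^\ell) - \partial_y \widetilde{F}\cdot \partial_x(W^\ell) = \tfrac{\ell}{r}\,W^{\ell-r}\bigl(\partial_x \widetilde{F}\cdot \partial_y \widetilde{F} - \partial_y \widetilde{F}\cdot \partial_x \widetilde{F}\bigr) = 0,
\]
which is the desired vanishing. The only real obstacle is the bookkeeping that makes the chain rule legitimate across the fractional exponent; once the auxiliary element $W$ is introduced, the argument reduces to differentiating the algebraic identity $W^r = \widetilde{F}$ inside a formal power series ring, and no convergence issue arises because everything is formal in $t$.
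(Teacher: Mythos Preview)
Your proof is correct and follows essentially the same route as the paper: both introduce an $r$-th root $W$ (the paper calls it $h$) of $\widetilde{F}$ in a formal power-series ring, identify $\widetilde{F}^{(e-\gamma)/d}$ with $W^\ell$, and then use the chain rule to see that the Jacobian determinant $[W^r,W^\ell]$ vanishes. Your version is a bit more explicit about the ambient ring and the legitimacy of the chain rule, but the argument is the same.
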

\begin{proof}
There exists $h=x^{m/r}y^{n/r}+h_1(x,y)t+h_2(x,y)t^2+\cdots\in\mathbb{C}[x^\pm1,y^\pm1][[t]]$ such that $\widetilde{F}=h^r$, $\widetilde{F}^{\frac{e-\gamma}{d}}=h^{\ell}$. 
Then 
$[\widetilde{F},\widetilde{F}^{\frac{e-\gamma}{d}}]=[h^r,h^\ell]=\begin{vmatrix} rh^{r-1}h_x&rh^{r-1}h_y\\ \ell h^{\ell-1}h_x&\ell h^{\ell-1}h_y\end{vmatrix}=r
	\ell h^{r+\ell-2}h_xh_y-r \ell h^{r+\ell-2}h_xh_y=0$.
\end{proof}

\begin{remark}\label{rem:degree}
	Note that when 
	$\frac{e-\gamma}{d} = \ell \cdot \frac{1}{r}$  for some integer $\ell$,
		it follows from \eqref{eq:xA} that 
        $[\widetilde{F}^{\frac{e-\gamma}{d}}]_{t^{\mu-\gamma}}$ is 
		a homogeneous polynomial (respectively,
		Laurent polynomial) of degree $e-\mu$
		when $e-\mu \geq 0$ (respectively, $e-\mu < 0$). 
\end{remark}

\begin{proof}[Proof of Theorem \ref{prop:Magnus_formula_extended}]
Let $\mu_0=d+e-1$.
Define constants 
$$\lambda_\ell=
\begin{cases}
&[F,G],\text{ for $\ell=\mu_0$,}\\ 
&0, \text{ for $\ell \neq \mu_0$}. 
\end{cases}
$$
Then
	\begin{equation}\label{eq:lambda}
		\lambda_\ell=\big[[\widetilde{F},\widetilde{G}]\big]_{t^\ell}
=\big[[\sum_{i\ge0} F_{d-i}t^{i},\sum_{j\ge0} G_{e-j}t^{j}]\big]_{t^\ell}
=\sum_{\stackrel{i,j\ge0}{i+j=\ell}}[F_{d-i},G_{e-j}],
	\end{equation}
so
\begin{equation}\label{eq:GF=FG:variation}
[G_{e-\ell}, F_d]=-[F_d,G_{e-\ell}]=-\lambda_\ell+\sum_{\stackrel{i>0,j\ge0}{i+j=\ell}}[F_{d-i},G_{e-j}].
\end{equation}

We will prove the theorem by induction on $\mu$. 
The base case ($\mu=0$) follows from $[\ff_d, \GG_e]=0$ and \Cref{lem:top}.
For the inductive step, assume that $\mu>0$,
 that $c_0,\dots,c_{\mu-1}$ have already been determined,
and that \eqref{Magnus_formula_equivalence} holds for $\mu'<\mu$.
Note that in the right-hand side of 
\eqref{Magnus_formula_equivalence},
the only coefficients
$c_{\gamma}$ that contribute to the coefficient of $t^{\mu-\gamma}$ are those with 
$\gamma \leq \mu$, so we 
 show that 
requiring 
\eqref{Magnus_formula_equivalence} to hold uniquely determines $c_{\mu}$
	from $G_{e-\mu}$ together with $c_0,\dots , c_{\mu-1}$.
Define 	
	\begin{equation} \label{eq:H}
	  H_{e-\mu}:= G_{e-\mu}-\sum_{\gamma=0}^{\mu-1} c_\gamma [\widetilde{F}^{\frac{e-\gamma}{d}}]_{t^{\mu-\gamma}}.
	\end{equation}
We claim that:
$$\text{ For }\mu\le \mu_0, \quad [H_{e-\mu},F_d]=-\lambda_\mu.$$
Indeed,
\begingroup
\allowdisplaybreaks
\begin{align*}
[H_{e-\mu},F_d]
&=[G_{e-\mu},F_d]-[\sum_{\gamma=0}^{\mu-1} c_\gamma [\widetilde{F}^{\frac{e-\gamma}{d}}]_{t^{\mu-\gamma}},F_d]\\
&\stackrel{(a)}{=}
-\lambda_\mu+\sum_{\stackrel{i>0,j\ge0}{i+j=\mu}}[F_{d-i},G_{e-j}]-\sum_{\gamma=0}^{\mu-1} [c_\gamma [\widetilde{F}^{\frac{e-\gamma}{d}}]_{t^{\mu-\gamma}},F_d]\\
&\stackrel{(b)}{=}-\lambda_\mu+\sum_{j=0}^{\mu-1}[F_{d-\mu+j},\sum_{\gamma=0}^j c_\gamma [\widetilde{F}^{\frac{e-\gamma}{d}}]_{t^{j-\gamma}}]-\sum_{\gamma=0}^{\mu-1} [c_\gamma [\widetilde{F}^{\frac{e-\gamma}{d}}]_{t^{\mu-\gamma}},F_d]\\
&=-\lambda_\mu+
 \sum_{\gamma=0}^{\mu-1}c_\gamma 
\Big(
\sum_{j=\gamma}^{\mu-1}\big[F_{d-\mu+j},[\widetilde{F}^{\frac{e-\gamma}{d}}]_{t^{j-\gamma}}\big]
-\big[[\widetilde{F}^{\frac{e-\gamma}{d}}]_{t^{\mu-\gamma}},F_d \big]
\Big)
\\
&=-\lambda_\mu + \sum_{\gamma=0}^{\mu-1}c_\gamma 
\Big(
\sum_{j=\gamma}^{\mu}\big[F_{d-\mu+j},[\widetilde{F}^{\frac{e-\gamma}{d}}]_{t^{j-\gamma}}\big]
\Big)
\\
&\stackrel{(c)}{=}-\lambda_\mu+ \sum_{\gamma=0}^{\mu-1}c_\gamma 
\Big(
\sum_{i\le \mu-\gamma}\big[[\widetilde{F}]_{t^i},[\widetilde{F}^{\frac{e-\gamma}{d}}]_{t^{\mu-\gamma-i}}\big]
\Big)
\\
&=-\lambda_\mu+\sum_{\gamma=0}^{\mu-1}c_\gamma 
\Big(
\big[[\widetilde{F},\widetilde{F}^{\frac{e-\gamma}{d}}]\big]_{t^{\mu-\gamma}}
\Big)
\stackrel{(d)}{=}-\lambda_\mu
\end{align*}
\endgroup
where (a) uses \eqref{eq:GF=FG:variation}, (b) uses the inductive hypothesis, (c) is obtained by substitution $i=\mu-j$ and the fact that $[\widetilde{F}]_{t^i}=0$ for $i<0$, and (d) uses \Cref{lem:Jac}.

First, consider the case $\mu<\mu_0$.
The above claim asserts that $[H_{e-\mu},F_d]=-\lambda_\mu=0$. 
If $H_{e-\mu}=0$, 
	then $c_{\mu}=0$ is the unique constant satisfying \eqref{Magnus_formula_equivalence}. 
Now suppose $H_{e-\mu}\neq0$. 
By the inductive hypothesis and \Cref{rem:degree}, 
	$H_{e-\mu}$ is a homogeneous Laurent polynomial in $\C[x,y,F_d^{-1/r}]$ of degree $e-\mu$. 
By \Cref{lem:top} (with $f=F_d, g=H_{e-\mu}, h=F_d^{1/r}=x$), we have $r(e-\mu)/d\in\mathbb{Z}$ (which is trivially true), and 
there is a unique element $c_\mu\in \mathbb{C} \setminus\{0\}$
such that 
$$H_{e-\mu}=c_\mu F_d^{\deg H_{e-\mu}/\deg F_d}=c_\mu F_d^{\frac{e-\mu}{d}}=c_\mu x^{e-\mu}.$$
This shows the unique existence of the constant $c_{\mu}$ satisfying \eqref{Magnus_formula_equivalence}.  

\medskip

Next, consider the case $\mu=\mu_0$.  
Let $R=\frac{-\lambda_{\mu_0}}{n-m} \cdot \frac{1}{x^{m-1}y^{n-1}}$.
Then $[R,F_d]=-\lambda_{\mu_0}$, thus 
 $[H_{e-\mu_0}-R,F_d] =0$, so there is a constant $c_{\mu_0}$ such that 
 $H_{e-\mu_0}-R=c_{\mu_0}x^{e-\mu_0}=c_{\mu_0} x^{-d+1} =  c_{\mu_0} [\widetilde{F}^{\frac{e-\mu_0}{d}}]_{t^0}$.
 Then 
\begin{equation}\label{eq:star1}
-\sum_{\gamma=0}^{d+e-1} c_\gamma [\widetilde{F}^{\frac{e-\gamma}{d}}]_{t^{\mu_0-\gamma}} 
=G_{e-\mu_0}-\sum_{\gamma=0}^{d+e-1} c_\gamma [\widetilde{F}^{\frac{e-\gamma}{d}}]_{t^{\mu_0-\gamma}} = R
\end{equation}
which proves \eqref{Magnus_formula_extended}. 
\end{proof}

\end{document}